%% file: wdgp_siamuq.tex
\documentclass[final,onefignum,onetabnum]{siamonline250211}

\input{ex_shared}
\ifpdf
\hypersetup{
  pdftitle={Wavelet compression for sampling deep Gaussian processes},
  pdfauthor={Oskar Broukal, Michael Feischl, Conor Osborne}
}
\fi

\begin{document}

\maketitle

\begin{abstract}
  Deep Gaussian processes (DGPs) use iterative sampling to create non-stationary random fields.  This is notoriously expensive because the covariance at each non-initial layer depends on the realised field from the layer below, so it changes with every sample. Moreover, standard fast sampling techniques like circulant embedding or matrix compression fail since the underlying kernel usually lacks the necessary regularity and structure. We develop and analyse a wavelet compression method for convolution Mat\'ern DGPs. At each layer the covariance matrix is compressed using wavelets and a Lanczos algorithm  computes the random sample. For specific parameter ranges involving the smoothness of the Mat\'ern kernel as well as the smoothness of the wavelets, we show that the accumulative compression and sampling errors through all layers decay exponentially with increasing truncation parameter. We obtain a practical DGP sampling algorithm by choosing a truncation that gives log-linear covariance storage and matrix–vector costs, enabling DGP sampling at nearly linear computational cost.
\end{abstract}

\begin{keywords}
  deep Gaussian processes, wavelets, covariance compression, Matérn kernels, random field sampling, error propagation
\end{keywords}

\begin{AMS}
  60G15, 65T60, 65F50, 65C20
\end{AMS}

\section{Introduction}\label{sec:introduction}

Deep Gaussian processes (DGPs) compose conditionally Gaussian random fields to produce hierarchical non-Gaussian fields, in which the covariance at each layer depends on the realised field from the layer below \cite{damianou2013deep,dunlop2018deep}. This flexibility comes with a substantial computational cost. For a Gaussian process with a fixed covariance matrix, the cost of a single factorisation is amortised over repeated samples; in a DGP the covariance at layer \(\ell\) depends on the realised field at layer \(\ell-1\), so it changes as a sample is propagated through the hierarchy and a single factorisation cannot in general be reused. At a spatial resolution with \(p\) degrees of freedom, dense factorisation of a covariance matrix requires \(O(p^3)\) arithmetic and \(O(p^2)\) storage \cite{Rasmussen06gaussianprocesses}, so repeated simulation becomes prohibitive as the resolution grows.
Additionally, standard fast sampling techniques such as circulant embedding~\cite{circulant_embedding1,circulant_embedding2} and matrix compression methods~\cite{Feischl2018} do not work because the involved kernels are usually not structured or regular enough.

\medskip

In the following, we show that a standard wavelet compression is enough to overcome these limitations and can produce approximate samples of a DGP in nearly linear computational cost in \(p\), with high-probability error bounds that decay exponentially with the truncation level at fixed spatial resolution. The main difficulty of the analysis is to track the approximation and compression errors through subsequent layers of the DGP. We underline the theoretical results with numerical experiments.

\medskip

A classical approach to wavelet compression of integral operators is developed in \cite{BeylkinCoifmanRokhlin1991}, where decay of the kernel away from the diagonal and the cancellation properties of the wavelet basis yield entrywise decay estimates from which most entries may be discarded; the resulting compression theory and its complexity are developed further in \cite{Stevenson2004,Dahmen2006}. For Gaussian random fields, \cite{Harbrecht2024} constructs sparse multilevel representations of covariance and precision operators, including the Mat\'ern class, and obtains fast sampling algorithms by exploiting wavelet compressibility and operator structure; this is the closest existing work to the covariance-compression component of the present paper. Multilevel techniques for the fast simulation of Gaussian random fields are also developed in \cite{HerrmannKirchnerSchwab2020}. A related matrix-function viewpoint appears in \cite{Feischl2018}, where an \(\mathcal H^2\)-matrix approximation of a covariance matrix is combined with a Krylov/Lanczos procedure for the square-root action \(\bC^{1/2}z\) without forming a dense square root. The present work shares the principle of combining covariance compression with iterative matrix-function actions, but both the compression mechanism and the matrix function arising after compression are different.

Covariances depending on a parameter have been treated by reduced-order and SPDE-based methods. Reduced-basis surrogates of Karhunen--Lo\`{e}ve eigenspaces for parameterised covariance operators are constructed in \cite{LatzEisenbergerUllmann2019}, while \cite{LatzTeckentrupUrbainczyk2025} develops computational methods for DGP priors from fractional-SPDE representations and rational approximation. Mathematical analysis of non-stationary DGP priors is developed in \cite{OsborneTeckentrup2026}, and the convolution Mat\'ern hierarchy used here belongs to the broader class of non-stationary constructions based on input warping, kernel mixtures and spatially varying correlation lengths \cite{Sampson1992,Fuentes2001,Paciorek2003,Volodina2020,noack2024unifying}.

\medskip

Our approach considers the finite-dimensional covariance matrices in a biorthogonal multiresolution wavelet basis directly. For a covariance matrix \(\bC\) indexed by wavelets at levels \(j\) and \(j'\), we consider a truncation $\bC^L$ that retains every diagonal entry as well as an off-diagonal entry only when \(j+j'\le L\). The resulting sparsity structure is illustrated in Figure~\ref{fig:sparsity-patterns}.
For each layer, standard wavelet compression arguments show an exponential decay of compression error with respect to $L$. Therefore, sufficiently large $L$ also guarantees that the truncated matrix is positive semidefinite. However, numerical experiments show such a choice is often prohibitively large in practice. For this reason, we include a projection of the truncated matrix onto its positive spectral part $\widehat{\bC}^{\,L}:=(\bC^L)_+$ into the error analysis and use the remaining matrix for sampling.
\begin{figure}[htbp]
    \centering

    \begin{subfigure}[t]{0.32\textwidth}
        \centering
        \includegraphics[width=\textwidth]{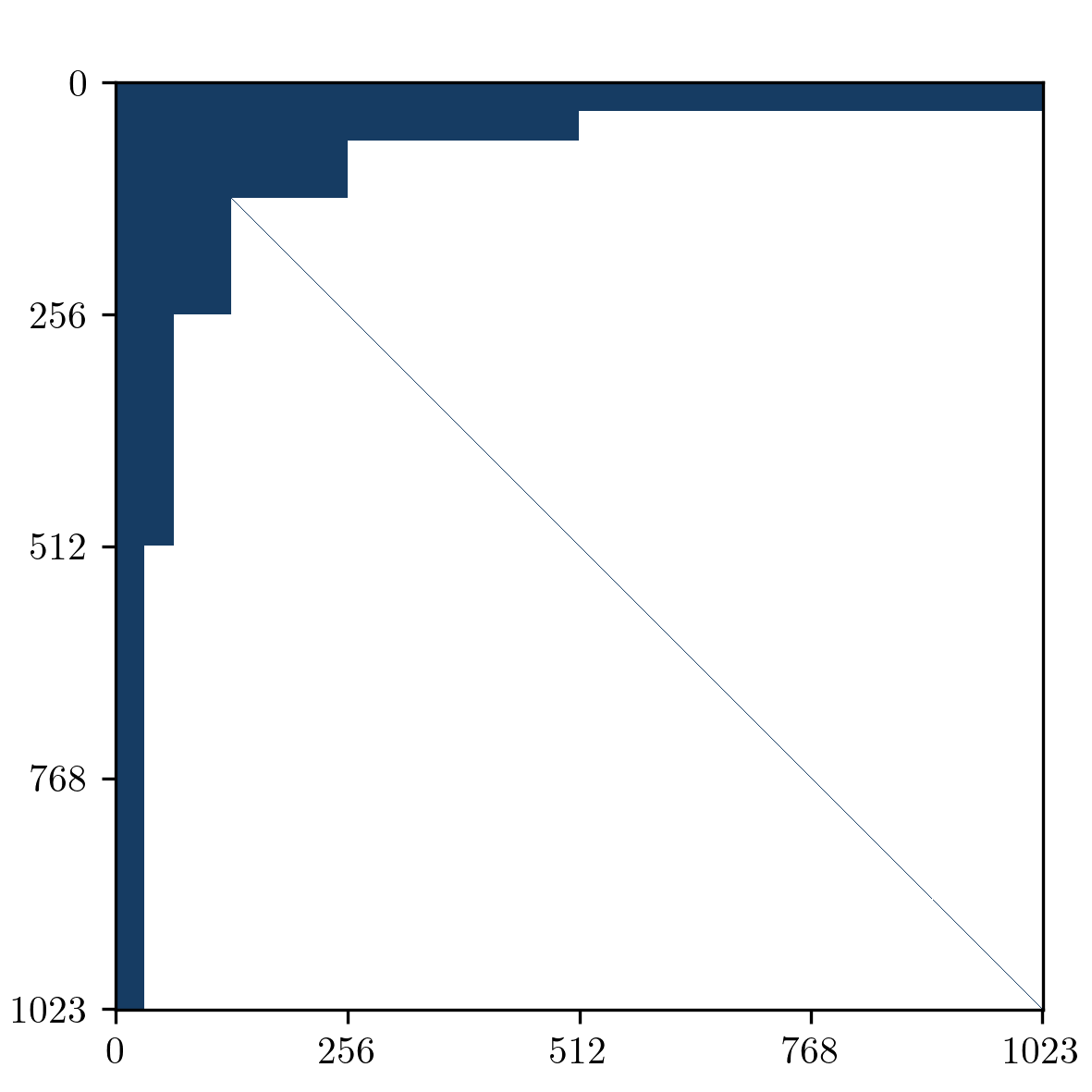}
        \caption{\(11.02\%\) non-zeros at \(L=13\)}
        \label{fig:sparsity-L13}
    \end{subfigure}
    \hfill
    \begin{subfigure}[t]{0.32\textwidth}
        \centering
        \includegraphics[width=\textwidth]{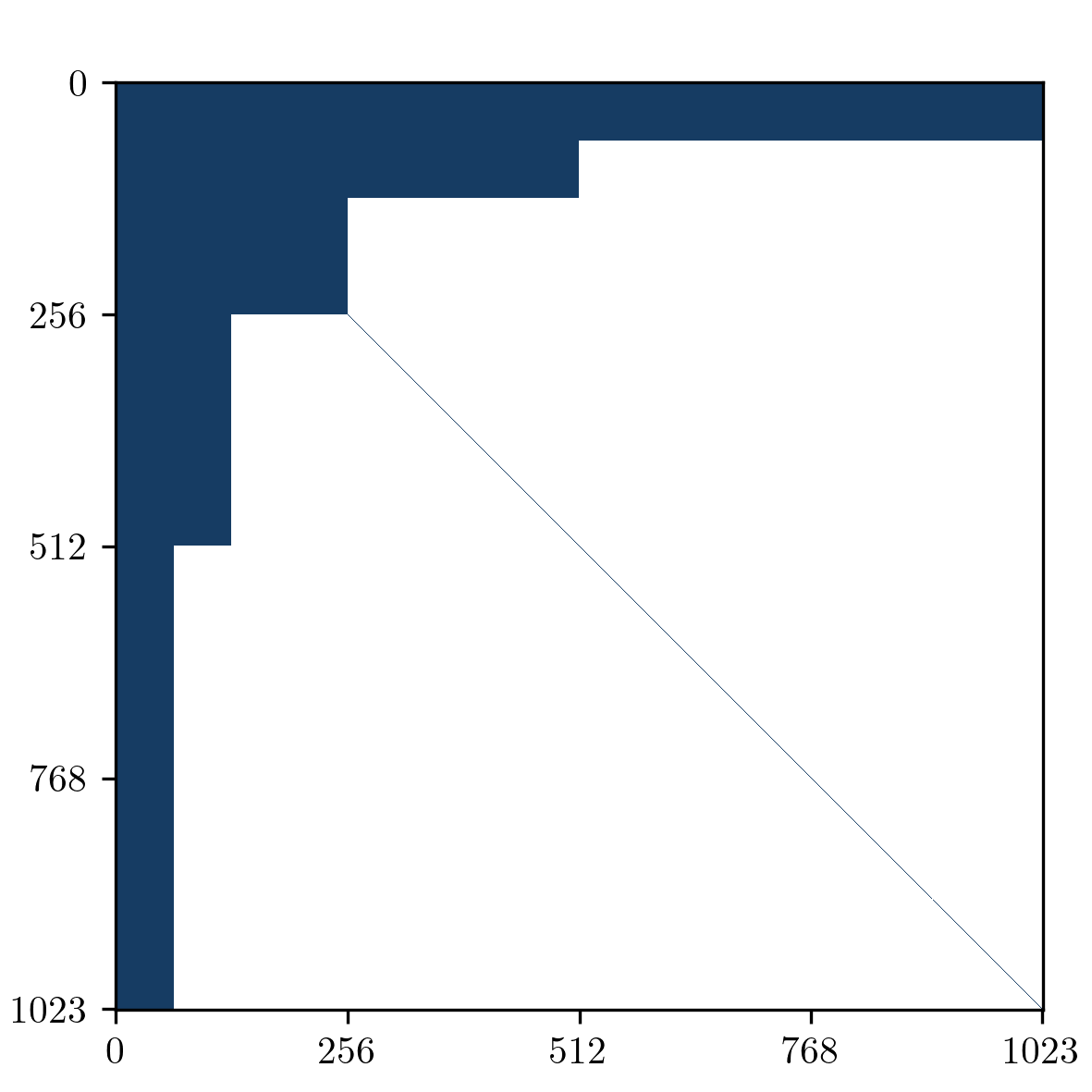}
        \caption{\(18.9\%\) non-zeros at \(L=14\)}
        \label{fig:sparsity-L14}
    \end{subfigure}
    \hfill
    \begin{subfigure}[t]{0.32\textwidth}
        \centering
        \includegraphics[width=\textwidth]{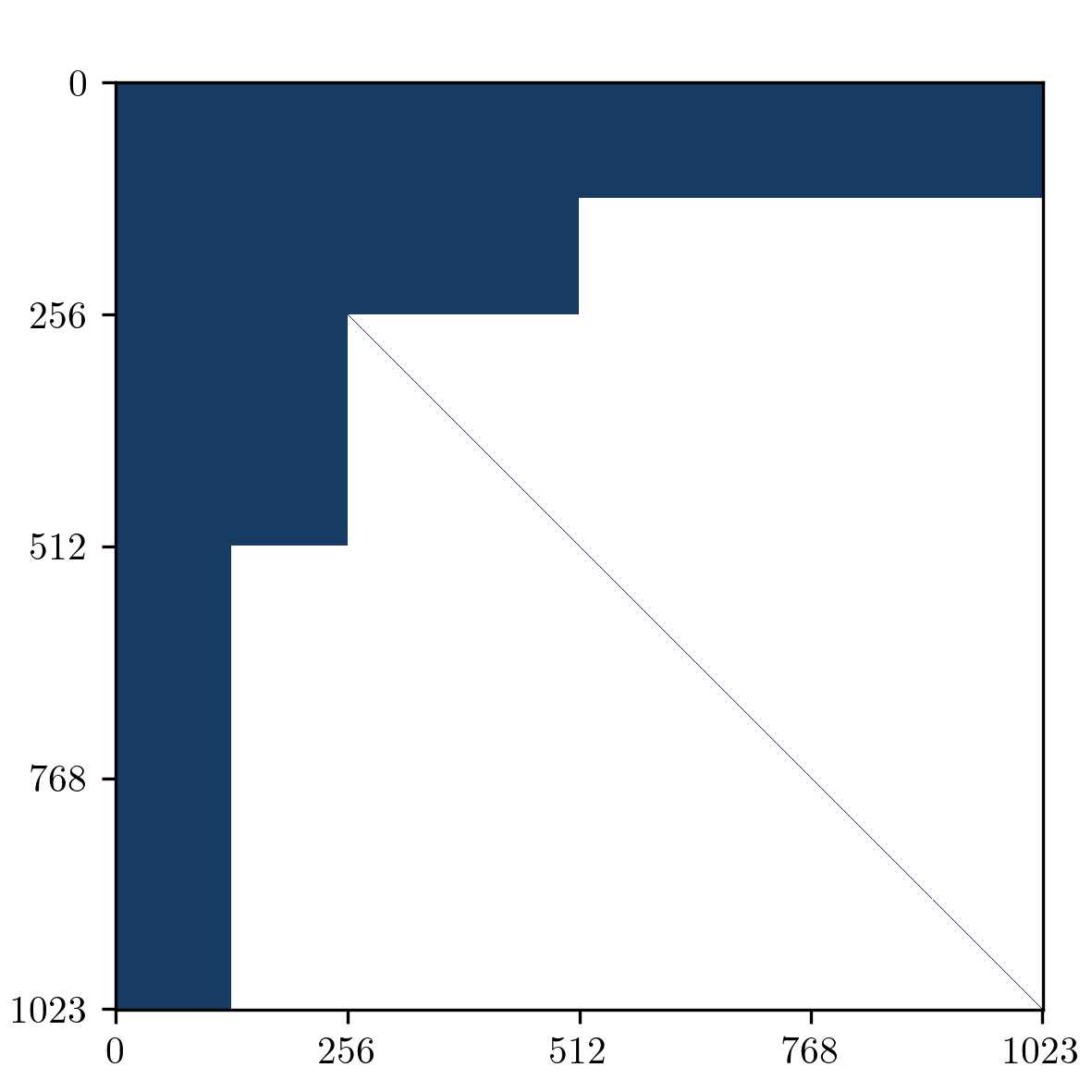}
        \caption{\(31.3\%\) non-zeros at \(L=15\)}
        \label{fig:sparsity-L15}
    \end{subfigure}

    \caption{Sparsity patterns for \(J=9\) and different truncation levels \(L\).}
    \label{fig:sparsity-patterns}
\end{figure}
The second step is to show that this single-layer estimate survives the nonlinear recursion of a DGP. We consider convolution Mat\'ern layers in which the preceding field determines the squared local correlation length through a positive link function \(F\), i.e., the previous sample is composed with $F$ to specify the correlation length on the domain. Under certain assumptions on this link function $F$,  we can control the error propagation through the layers with high probability on  intermediate samples.

The resulting theorem is a stability estimate for the hierarchy as a whole: at every fixed spatial resolution and for every cutoff \(L\), with arbitrarily high prescribed probability the layerwise \(L^2(\Omega)\) sampling errors through the penultimate layer, the terminal covariance error and the terminal sampling error are simultaneously of order  $\mathcal{O}(2^{-rL})$; see Theorem~\ref{thm:dgp-total-error-layer-D}.

The projected covariance \((\bC^L)_+\) is generally dense, but it need not be formed, since Krylov methods such as symmetric Lanczos are capable of computing \(\bigl((\bC^L)_+\bigr)^{1/2}z=h(\bC^L)z\) without full access to \((\bC^L)_+\)~\cite{Higham2008Functions}.

\medskip

The rest of the manuscript is organised as follows:
Section~\ref{sec:dgp_intro} introduces the convolution Matérn hierarchy and the covariance properties required in the analysis. Section~\ref{sec:wavelet-basis} develops the wavelet covariance compression, including the weighted trace estimate, scale truncation, sparsity and positive-semidefinite projection. 
The main result is stated in Section~\ref{sec:main_error_analysis}. Section~\ref{sec:kernel-positivity} collects the kernel properties used in its proof, which is given in Section~\ref{sec:main_proof}.
Section~\ref{sec:numerics} describes the numerical implementation.

\section{Model}\label{sec:dgp_intro}

Throughout, \(\Omega\subset\R^n\) denotes a fixed bounded Lipschitz domain.
All Sobolev spaces are taken over \(\Omega\), or over
\(\Omega\times\Omega\) on product domains. Whenever pointwise continuity of
a kernel is used, the kernel is understood to admit a continuous extension to
\(\overline\Omega\times\overline\Omega\); the associated integral operators
are still defined over \(\Omega\).


\paragraph{Gaussian processes and DGPs.}

For a continuous symmetric positive-semidefinite kernel
\(k:\overline\Omega\times\overline\Omega\to\mathbb R\), we write
\[
    f\sim\operatorname{GP}(0,k)
\]
for a centred Gaussian random field with covariance
\(\mathbb E[f(x)f(y)]=k(x,y)\). Since the mean of the Gaussian process is zero, the covariance function \(k\) completely defines the process. The associated covariance operator is
\[
    (\Cop v)(x):=\int_\Omega k(x,y)v(y)\,dy,
\]
for \(v\in L^2(\Omega)\). We use the terms Gaussian process and Gaussian random field interchangeably on \(\Omega\).

For a fixed depth \(D\in\mathbb N\) with \(D\ge1\), a DGP is a sequence of
random fields \(\{f_\ell\}_{\ell=0}^D\) with Gaussian transition laws
\cite{damianou2013deep,dunlop2018deep}:
\[
    f_0\sim\operatorname{GP}(0,k_0),
    \qquad
    f_\ell\mid f_{\ell-1}
    \sim\operatorname{GP}\bigl(0,k_\ell(\cdot,\cdot;f_{\ell-1})\bigr),
    \quad \ell=1,\ldots,D.
\]
The transition covariance at layer \(\ell\) depends only on the preceding field. We call \(f_D\) the terminal layer.
At layer \(\ell\), \(\Cop_\ell(g)\) denotes the covariance operator associated
with \(k_\ell(\cdot,\cdot;g)\), and \(\Cop_0\) denotes the initial covariance
operator.

\paragraph{Convolution Matérn kernels.}
The Mat\'ern family is a standard class of stationary covariance kernels for
Gaussian random fields; see, for example, \cite{Rasmussen06gaussianprocesses}.
For \(\nu>0\),
\begin{equation}\label{eq:matern defn}
    k_{\M(\nu)}(x,y)
    = \sigma^2 \frac{2^{1-\nu}}{\Gamma(\nu)}
      \left(\sqrt{2\nu}\frac{\|x-y\|_2}{\lambda_0}\right)^{\nu}
      K_{\nu}\left(\sqrt{2\nu}\frac{\|x-y\|_2}{\lambda_0}\right),
\end{equation}
where \(\Gamma\) is the gamma function, \(K_{\nu}\) is the modified Bessel
function of the second kind, and \(\sigma^2,\lambda_0,\nu>0\). Here \(\sigma^2\) is the marginal variance, \(\lambda_0\) the correlation length (the Mat\'ern length-scale parameter), and \(\nu\) the smoothness parameter. At \(x=y\) the kernel is understood as its limiting value \(\sigma^2\). We use the radial profile
notation
\begin{align}\label{eq:maternradial}
    \phi_{\nu,\lambda_0}(r)
    :=
    \sigma^2 \frac{2^{1-\nu}}{\Gamma(\nu)}
    \left(\sqrt{2\nu}\frac{r}{\lambda_0}\right)^\nu
    K_\nu\left(\sqrt{2\nu}\frac{r}{\lambda_0}\right),
\end{align}
so that
\[
    k_{\M(\nu)}(x,y)
    =
    \phi_{\nu,\lambda_0}(\|x-y\|_2).
\]
The profile \(\phi_{\nu,\lambda_0}\) carries the marginal variance, so that \(\phi_{\nu,\lambda_0}(0)=\sigma^2\). The parameterisation of its Gaussian mixture is given in Appendix~\ref{prop:matern-gaussian-mixture}.

Non-stationarity can be introduced by allowing the smoothness, variance or correlation length to vary spatially. Here we vary the correlation length. For \(\nu>0\) and a positive function
\(\lambda:\overline\Omega\to(0,\infty)\), define the convolution Mat\'ern
kernel
\begin{equation}\label{eq:convolution-kernel}
    k_{\conv}^{\lambda,k_{\M(\nu)}}(x,y)
    =
    \frac{
        2^{n/2}\lambda(x)^{n/4}\lambda(y)^{n/4}
    }{
        (\lambda(x)+\lambda(y))^{n/2}
    }\,
    \phi_{\nu,1}\!\left(
        \frac{\|x-y\|_2}
        {\sqrt{(\lambda(x)+\lambda(y))/2}}
    \right).
\end{equation}
Here \(\lambda(x)\) is the squared local correlation-length parameter, so the local correlation length is \(\sqrt{\lambda(x)}\), and
\(\phi_{\nu,1}\) is the Mat\'ern radial profile with unit correlation length.
The prefactor equals \(1\) on the diagonal, so
\(k_{\conv}^{\lambda,k_{\M(\nu)}}(x,x)=\sigma^2\) for every \(x\in\Omega\)
and the marginal variance is \(\sigma^2\) independently of \(\lambda\). If
\(\lambda(x)\equiv\lambda_0^2\), then \eqref{eq:convolution-kernel} reduces to
the stationary Mat\'ern kernel with correlation length \(\lambda_0\). This
is a special case of the more general convolution-kernel construction of
\cite{Paciorek2003,Paciorek2003phd}.

The DGP studied below uses a stationary Mat\'ern initial layer,
\[
    k_0=k_{\M(\nu_0)},
\]
and, for \(\ell=1,\ldots,D\), convolution Mat\'ern layers
\[
    k_\ell(\cdot,\cdot;f_{\ell-1})
    =
    k_{\conv}^{F(f_{\ell-1}),\,k_{\M(\nu_\ell)}}.
\]
Here \(F:\mathbb R\to(0,\infty)\) is a fixed positive link, applied
pointwise according to
\[
    F(g)(x):=F(g(x)).
\]
Thus the realised field at layer \(\ell-1\) determines the local correlation length
at layer \(\ell\). For example, if \(F(t)=e^t\), then the local squared
correlation length is \(\lambda(x)=e^{f_{\ell-1}(x)}\). Layer-dependent links
\(F_\ell\) can be treated by carrying the corresponding constants.

The marginal variance \(\sigma^2\) is fixed across layers. The smoothness
parameters \(\nu_\ell\) may vary with \(\ell\) but are fixed in advance, and
all remaining hyperparameters are deterministic and fixed. Hence, at each
non-initial layer, the covariance depends on the realised sample only through
\(F(f_{\ell-1})\).

\begin{assumption}
\label{as:dgp-fixed-regularity}
Fix a single integer \(r\in\mathbb N\) such that
\begin{equation}\label{eq:fixed-r-assumption}
    \frac n2<r<\nu_\ell,
    \qquad \ell=0,\dots,D.
\end{equation}
We further assume that the positive link satisfies
\begin{equation}\label{eq:dgp-log-link-assumption}
    F\in C^r(\R;(0,\infty)),
    \qquad
    L_F
    :=
    \sup_{t\in\R}\left|\frac{F'(t)}{F(t)}\right|
    <\infty.
\end{equation}
\end{assumption}

The condition \(r>n/2\) provides the Sobolev embedding
\(H^r(\Omega)\hookrightarrow C(\overline\Omega)\) \cite{RAAdams_JJFFournier_2003} and the product estimates of Lemma~\ref{lem:integer-sobolev-product} used for finite wavelet reconstructions and compositions, while
\(r<\nu_\ell\) ensures that the Mat\'ern mixture moments \eqref{eq:matern-mixture-moments} required up to order \(r\) are finite. The bound on \(F'/F\) is equivalent to global Lipschitz continuity of \(\log F\); together with the log-scale kernel estimate of Lemma~\ref{lem:conv-matern-log-scale-lipschitz} it yields global stability of the covariance map with respect to its input field (Lemma~\ref{lem:dgp-L2-covariance-lipschitz}). On bounded \(H^r\)-sets, Sobolev embedding and positivity of \(F\) give uniform upper and lower bounds for \(F(g)\); see Lemma~\ref{lem:dgp-scale-field-bounds}.

Since \(r\) is a positive integer, \eqref{eq:fixed-r-assumption} implies in
particular that \(\nu_\ell>r\ge1\) for every \(\ell\), so
Lemma~\ref{lem:conv-matern-log-scale-lipschitz}, which requires \(\nu>1\),
applies at every layer. Condition \eqref{eq:dgp-log-link-assumption} is
satisfied, for example, by \(F(t)=e^{\alpha t+\beta}\), by \(F(t)=t^2+c\) with
\(c>0\), for which \(L_F=c^{-1/2}\), and by positive shifted and scaled
hyperbolic tangent links \(F(t)=a+b\tanh(ct)\) with \(a>|b|\).

  Assumption~\ref{as:dgp-fixed-regularity} is assumed for the rest of the paper, and \(r\) always denotes the integer fixed there.

\section{Matrix compression}\label{sec:matrix_compression}

\subsection{Scale truncation}\label{sec:det-truncation}

We first study compression for matrices whose coordinates are grouped by
resolution level. The finite wavelet covariance matrices of
Section~\ref{sec:wavelet-basis} will provide a concrete instance of this
setting.

Fix \(j_0\in\mathbb N_0\), and let \(\nabla_j\) be a finite index set for each \(j\ge j_0\). For \(J\ge j_0\), set
\[
    \Lambda_J
    :=
    \{(j,k):j_0\le j\le J,\ k\in\nabla_j\},
    \qquad
    p(J):=\#\Lambda_J
    =\sum_{j=j_0}^J\#\nabla_j,
\]
and write \(j(\iota)=j\) for \(\iota=(j,k)\). Throughout this section, \(\bC\in\R^{p(J)\times p(J)}\) is a symmetric positive semidefinite matrix indexed by \(\Lambda_J\). Define
\begin{equation}\label{eq:dgp-Dbeta-def}
    D_{t,J}
    :=
    \diag\bigl(2^{tj(\iota)}:\iota\in\Lambda_J\bigr),
    \qquad t\in\R,
\end{equation}
abbreviated to \(D_t\) at fixed resolution, so that \(D_{0,J}=I\). We use the exponent \(r\) fixed in Assumption~\ref{as:dgp-fixed-regularity}.

The truncation estimate uses only symmetry, positive semidefiniteness and
these scale weights. The convolution Mat\'ern structure enters later,
through the weighted trace bound of
Lemma~\ref{lem:conv-matern-sobolev-trace}. We first analyse the truncation error, then introduce the projected covariance used for sampling, and finally count the entries the truncation retains.

For \(\iota=(j,k)\), \(\iota'=(j',k')\) and \(L\in\N\), set
\begin{equation}\label{def:scale-compression}
    [\bC^{L}]_{\iota,\iota'}
    :=
    \begin{cases}
        [\bC]_{\iota,\iota'}, & \iota=\iota' \text{ or } j+j'\le L, \\
        0, & \iota\ne\iota' \text{ and } j+j'>L.
    \end{cases}
\end{equation}
The truncation is symmetric, since it depends only on the two levels, and it
retains every diagonal entry, so the marginal variances of the wavelet
coefficients are unchanged by the truncation.

The truncated matrix need not be positive semidefinite.  
We therefore restore positive semidefiniteness of the truncated \(\bC^L\) matrix by clipping its negative eigenvalues to zero as described in Section~\ref{sec:psd-projected-covariance}.

Positive semidefiniteness bounds the Frobenius norm of the weighted matrix \(D_r\bC D_r\) by its trace, independently of the number of wavelets at each scale. On the discarded set, the scale weights then provide the decay in \(L\) appearing below.
\begin{theorem}\label{thm:trace-controlled-scale-tail}
Let \(\bC\) be a symmetric positive semidefinite matrix indexed by
\(\Lambda_J\), let \(\bC^L\) be its diagonal-preserving scale truncation
\eqref{def:scale-compression}, and let \(0\le\beta\le r\). Then
\begin{equation}\label{eq:trace-controlled-scale-tail}
    \|D_\beta(\bC-\bC^L)D_\beta\|_{\op}
    \le
    \|D_\beta(\bC-\bC^L)D_\beta\|_F
    \le
    \tr(D_r\bC D_r)\,2^{-(r-\beta)L}.
\end{equation}
In particular,
\[
    \|\bC-\bC^L\|_{\op}
    \le
    \|\bC-\bC^L\|_F
    \le
    \tr(D_r\bC D_r)\,2^{-rL}.
\]
\end{theorem}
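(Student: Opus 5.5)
The first inequality is simply the operator norm being bounded by the Frobenius norm. The work is in the second inequality, which I will prove entrywise. The plan is to combine the Cauchy--Schwarz inequality for positive semidefinite matrices with the scale weights on the discarded index pairs.

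Set \(\bE:=\bC-\bC^L\). By \eqref{def:scale-compression}, \(\bE\) is supported on the set \(\mathcal D_L:=\{(\iota,\iota'):\iota\ne\iota',\ j(\iota)+j(\iota')>L\}\), where it agrees with \(\bC\). Hence
\[
    \|D_\beta\bE D_\beta\|_F^2
    =
    \sum_{(\iota,\iota')\in\mathcal D_L}
    2^{2\beta(j+j')}\,[\bC]_{\iota,\iota'}^2 ,
    \qquad j=j(\iota),\ j'=j(\iota').
\]
On \(\mathcal D_L\) we have \(j+j'>L\), and since \(\beta\le r\),
\[
    2^{2\beta(j+j')}=2^{2r(j+j')}\,2^{-2(r-\beta)(j+j')}\le 2^{2r(j+j')}\,2^{-2(r-\beta)L}.
\]
Therefore
\[
    \|D_\beta\bE D_\beta\|_F^2\le 2^{-2(r-\beta)L}\sum_{\iota,\iota'}[D_r\bC D_r]_{\iota,\iota'}^2
    =2^{-2(r-\beta)L}\|D_r\bC D_r\|_F^2 ,
\]
where enlarging the sum to all pairs is harmless because every term is nonnegative.

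Next, \(\bM:=D_r\bC D_r\) is symmetric positive semidefinite, being a congruence of \(\bC\) by a diagonal matrix. For such a matrix, \(\|\bM\|_F^2=\sum_i\mu_i^2\le(\sum_i\mu_i)^2=\tr(\bM)^2\), where the \(\mu_i\ge0\) are its eigenvalues. Alternatively, the PSD Cauchy--Schwarz bound \(\bM_{\iota\iota'}^2\le\bM_{\iota\iota}\bM_{\iota'\iota'}\) summed over all pairs gives the same estimate. Taking square roots yields \eqref{eq:trace-controlled-scale-tail}. The case \(\beta=0\), where \(D_0=I\), gives the final claim.

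There is no real obstacle in this argument. The only point needing care is that the bound must not depend on \(\#\nabla_j\). This independence comes from bounding the Frobenius norm by the trace of \(\bM\), rather than estimating entries uniformly and counting them.
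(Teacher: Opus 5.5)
Your proposal is correct and follows essentially the same route as the paper: you write the discarded entries of \(D_\beta(\bC-\bC^L)D_\beta\) as \(2^{-(r-\beta)(j+j')}[D_r\bC D_r]_{\iota,\iota'}\), bound the Frobenius norm by \(2^{-(r-\beta)L}\|D_r\bC D_r\|_F\), and finish with \(\|A\|_F\le\tr(A)\) for positive semidefinite \(A\). Your eigenvalue (or entrywise Cauchy--Schwarz) justification of that last inequality fills in a step the paper uses without proof.
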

\begin{proof}
Set \(Q:=D_r\bC D_r\), which is positive semidefinite because \(\bC\) is and
\(D_r\) is diagonal with positive entries.  The matrix \(\bC-\bC^L\) vanishes
off the discarded set, on which
\[
    [D_\beta(\bC-\bC^L)D_\beta]_{\iota,\iota'}
    = 2^{\beta(j(\iota)+j(\iota'))}[\bC]_{\iota,\iota'}
    = 2^{-(r-\beta)(j(\iota)+j(\iota'))}Q_{\iota,\iota'} .
\]
Since \(j(\iota)+j(\iota')>L\) there and \(\beta\le r\),
\[
    \|D_\beta(\bC-\bC^L)D_\beta\|_F^2
    = \sum_{\substack{\iota\ne\iota'\\ j(\iota)+j(\iota')>L}}
      2^{-2(r-\beta)(j(\iota)+j(\iota'))}|Q_{\iota,\iota'}|^2
    \le 2^{-2(r-\beta)L}\|Q\|_F^2.
\]
The result follows by \(\|A\|_{F}\leq \tr(A)\) for positive semi definite symmetric matrices and taking square
roots gives the Frobenius estimate in
\eqref{eq:trace-controlled-scale-tail}. The operator-norm bound follows
from \(\|\cdot\|_{\op}\le\|\cdot\|_F\).
\end{proof}

To estimate the computational cost of working with the truncated matrix, we assume
\begin{equation}\label{eq:graded-level-cardinality}
    \#\nabla_j\simeq2^{nj},
    \qquad j\ge j_0,
\end{equation}
with constants independent of \(j\). Write \(\nnz(A)\) for the number of nonzero entries of \(A\). We bound the number of entries retained by the truncation, which is an upper bound for \(\nnz(\bC^L)\).

\begin{theorem}\label{thm:scale-only-sparsity}
Let \(\bC^L\) be the diagonal-preserving scale truncation
\eqref{def:scale-compression}.  If \(L\ge2J\) then nothing is truncated and
\(\bC^L=\bC\).  If \(L<2J\), then the number of nonzero entries satisfies
\[
    \nnz(\bC^L)
    \lesssim
    p(J)
    +
    \begin{cases}
        (L+1)2^{nL}, & L\le J,\\[1mm]
        (2J-L+1)2^{nL}, & J\le L<2J,
    \end{cases}
\]
with implicit constants depending only on \(n\), \(j_0\) and the constants
in \eqref{eq:graded-level-cardinality}.
\end{theorem}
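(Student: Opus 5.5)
The count is a direct sum over level pairs. The retained entries are the diagonal, which contributes at most \(p(J)\), together with all off-diagonal entries in blocks \((j,j')\) with \(j+j'\le L\). Hence
\[
    \nnz(\bC^L)\le p(J)+\sum_{\substack{j_0\le j,j'\le J\\ j+j'\le L}}\#\nabla_j\,\#\nabla_{j'}
    \lesssim p(J)+\sum_{\substack{j_0\le j,j'\le J\\ j+j'\le L}}2^{n(j+j')},
\]
where the last step uses \eqref{eq:graded-level-cardinality}. The trivial case is immediate: if \(L\ge2J\), then every pair satisfies \(j+j'\le 2J\le L\), so nothing is discarded and \(\bC^L=\bC\). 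It remains to bound the double sum for \(L<2J\).

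I will group the double sum by \(s=j+j'\). For fixed \(s\), the number of admissible pairs is \(N(s):=\#\{(j,j'):j_0\le j,j'\le J,\ j+j'=s\}\). This number satisfies \(N(s)\le s-2j_0+1\le s+1\), and also \(N(s)\le 2J-s+1\), since \(j\) must lie in \([s-J,J]\). The sum therefore becomes \(\sum_{s\le L}N(s)2^{ns}\).

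If \(L\le J\), I bound \(N(s)\le L+1\) and sum the geometric series. This gives \((L+1)\sum_{s\le L}2^{ns}\lesssim (L+1)2^{nL}\), with the constant \((1-2^{-n})^{-1}\).

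If \(J\le L<2J\), I split the range at \(s=J\). For \(s\le J\), the previous bound gives \(\lesssim(J+1)2^{nJ}\). For \(J<s\le L\), I use \(N(s)\le 2J-s+1\) and substitute \(m=L-s\). The contribution is then
\[
    \sum_{m\ge0}(2J-L+1+m)2^{n(L-m)}\lesssim (2J-L+1)2^{nL}+2^{nL}\lesssim(2J-L+1)2^{nL},
\]
since \(\sum_m m2^{-nm}<\infty\) and \(2J-L+1\ge 1\).

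The only delicate point is absorbing the low-\(s\) piece \((J+1)2^{nJ}\) into the stated bound. Writing \(L=J+t\) with \(0\le t<J\), we need \((J+1)2^{nJ}\lesssim (J-t+1)2^{n(J+t)}\). This follows from \((J+1)\le (J-t+1)(t+1)\) and \((t+1)2^{-nt}\lesssim 1\). Alternatively, I can avoid this step by handling that range more carefully: for \(s\le J\), write \(N(s)\le s+1\), which gives \(\lesssim (J+1)2^{nJ}\), and then apply the inequality above. Both routes give the claimed bound, with constants depending only on \(n\), \(j_0\) and the constants in \eqref{eq:graded-level-cardinality}.
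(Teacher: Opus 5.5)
Your proof is correct. It differs from the paper's only in how the level-pair sum \(\sum_{j+j'\le L}2^{n(j+j')}\) is organised.

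The paper iterates the sum row by row. For each \(j\), the inner geometric series in \(j'\) is \(\lesssim 2^{n\min(J,L-j)}\). In the regime \(J\le L<2J\) it splits at \(j=L-J\): the rows \(j\le L-J\) together contribute \(\sum_{j\le L-J}2^{nj}2^{nJ}\lesssim 2^{nL}\), and each of the remaining \(2J-L\) rows contributes \(\lesssim 2^{nL}\). The factor \(2J-L+1\) thus appears directly as a row count, and nothing has to be absorbed.

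You instead group along anti-diagonals \(s=j+j'\) with the counting bound \(N(s)\le\min\{s+1,\,2J-s+1\}\). This makes the symmetric shape of the truncation pattern explicit and explains both regimes at once. However, your split at \(s=J\) produces the extra term \((J+1)2^{nJ}\). You then absorb it correctly, using \((J-t+1)(t+1)=J+1+t(J-t)\ge J+1\) and \((t+1)2^{-nt}\le 1\).

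Your ``alternative'' in the last paragraph does not actually avoid that step; it reproduces the same \((J+1)2^{nJ}\) bound. The genuine shortcut in your framework is this: for \(s\le J\) one has \(N(s)\le s+1\le 2J-s+1\), so in the regime \(J\le L<2J\) the bound \(N(s)\le 2J-s+1\) holds on the whole range \(s\le L\). Your substitution \(m=L-s\) then gives \(\sum_{m\ge0}(2J-L+1+m)2^{n(L-m)}\lesssim(2J-L+1)2^{nL}\) in one line, with no split and no absorption. In the regime \(L\le J\) you keep \(N(s)\le s+1\) as you did.
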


\begin{proof}
If \(L\ge2J\) then \(j+j'\le2J\le L\) for every pair, so no entry is
discarded.  Assume \(L<2J\).  The truncation retains all \(p(J)\) diagonal
entries, and the retained off-diagonal entries lie in the set of pairs with
\(j+j'\le L\).  Hence, using \(\#\nabla_j\simeq2^{nj}\) for each admissible
\(j\in\{j_0,\ldots,J\}\),
\begin{equation}\label{eq:scale_sparsity}
    \nnz(\bC^{L})
    \le
    p(J)+
    \sum_{\substack{j,j'=j_0\\ j+j'\le L}}^{J}
    \#\nabla_j\#\nabla_{j'}
    \lesssim
    p(J)+
    \sum_{\substack{j,j'=j_0\\ j+j'\le L}}^{J}
    2^{nj}2^{nj'}.
\end{equation}
If \(L\le J\), then \(j+j'\le L\) implies \(j,j'\le L\), and
\[
    \nnz(\bC^{L})
    \lesssim
    p(J)+\sum_{j=j_0}^{L}2^{nj}\sum_{j'=j_0}^{L-j}2^{nj'}
    \lesssim
    p(J)+\sum_{j=j_0}^{L}2^{nj}2^{n(L-j)}
    \le
    p(J)+(L+1)2^{nL}.
\]
If \(J\le L<2J\), then for \(j\le L-J\) the constraint \(j'\le L-j\) is
inactive, so \(j'\le J\), while for \(j>L-J\) one has \(L-j<J\) and the
active constraint is \(j'\le L-j\).  Splitting \eqref{eq:scale_sparsity} at
\(j=L-J\) gives
\[
\begin{aligned}
    \nnz(\bC^{L})
    &\lesssim
    p(J)+
    \sum_{j=j_0}^{L-J}2^{nj}\sum_{j'=j_0}^{J}2^{nj'}
    +
    \sum_{j=L-J+1}^{J}2^{nj}\sum_{j'=j_0}^{L-j}2^{nj'}\\
    &\lesssim
    p(J)+\sum_{j=j_0}^{L-J}2^{nj}2^{nJ}
    +\sum_{j=L-J+1}^{J}2^{nj}2^{n(L-j)}
    \lesssim
    p(J)+(2J-L+1)2^{nL}.
\end{aligned}
\]
\end{proof}
\begin{corollary}\label{cor:scale-only-sparsity-regimes}
Let \(c\in\mathbb N_0\) be fixed and set \(L=J+c\).
Then, for all sufficiently large \(J\),
\[
    \nnz(\bC^{J+c})
    \lesssim
    2^{nc}\,p(J)\log p(J),
\]
so that \(L=J+O(1)\) gives \(\nnz(\bC^L)=O(p(J)\log p(J))\).
\end{corollary}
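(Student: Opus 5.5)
The plan is to apply Theorem~\ref{thm:scale-only-sparsity} directly with \(L=J+c\). For \(J\) large enough that \(c<J\), we have \(J\le L<2J\), so the second case of that theorem applies:
\[
    \nnz(\bC^{J+c})
    \lesssim
    p(J)+(2J-(J+c)+1)\,2^{n(J+c)}
    =
    p(J)+(J-c+1)\,2^{nc}\,2^{nJ}.
\]
If \(c=0\), then \(L=J\) and both cases of the theorem give the same bound, so nothing changes.

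The remaining step is to express \(2^{nJ}\) and \(J\) in terms of \(p(J)\), using \eqref{eq:graded-level-cardinality}.

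\textbf{Comparing \(2^{nJ}\) with \(p(J)\).} Since \(\#\nabla_J\simeq2^{nJ}\), the lower bound gives \(p(J)\ge\#\nabla_J\gtrsim2^{nJ}\). The geometric sum gives the matching upper bound
\[
    p(J)\lesssim\sum_{j=j_0}^J2^{nj}\lesssim2^{nJ}.
\]
Hence \(p(J)\simeq2^{nJ}\).

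\textbf{Comparing \(J\) with \(\log p(J)\).} From \(p(J)\gtrsim2^{nJ}\) we get \(\log p(J)\ge nJ\log2-C\). For \(J\) sufficiently large this yields \(J+1\lesssim\log p(J)\), with constants depending only on \(n\), \(j_0\) and the constants in \eqref{eq:graded-level-cardinality}. Together with \(J-c+1\le J+1\), this controls the factor in front of \(2^{nc}\,2^{nJ}\).

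\textbf{Combining.} Substituting both comparisons gives
\[
    \nnz(\bC^{J+c})\lesssim p(J)+2^{nc}\,p(J)\log p(J).
\]
For large \(J\) we have \(\log p(J)\ge1\) and \(2^{nc}\ge1\), so the term \(p(J)\) is absorbed, leaving \(\nnz(\bC^{J+c})\lesssim2^{nc}\,p(J)\log p(J)\).

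For the final assertion, note that \(L=J+O(1)\) means \(c\) is bounded, so \(2^{nc}\) is a constant. This gives \(\nnz(\bC^L)=O(p(J)\log p(J))\).

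There is no genuine obstacle in this argument. The only point needing care is the ``sufficiently large \(J\)'' qualifier. It is used twice: to guarantee \(c<J\), so that \(L=J+c<2J\) and we are in the second case rather than the untruncated regime \(L\ge 2J\), and to ensure \(\log p(J)\) is bounded below by a positive multiple of \(J+1\).
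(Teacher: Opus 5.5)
Your proposal is correct and is essentially the paper's proof: you apply the second case of Theorem~\ref{thm:scale-only-sparsity} with $L=J+c<2J$, then use $p(J)\simeq 2^{nJ}$ to obtain $2^{nL}\simeq 2^{nc}p(J)$ and $J-c+1\lesssim\log p(J)$. You also spell out details the paper leaves implicit, namely the two-sided comparison $p(J)\simeq 2^{nJ}$, the role of ``sufficiently large $J$'', and the absorption of the additive $p(J)$ term.
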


\begin{proof}
Since \(p(J)\simeq2^{nJ}\), we have \(J\simeq\log p(J)\).  Taking
\(L=J+c<2J\) in the second case of
Theorem~\ref{thm:scale-only-sparsity} gives \(2J-L+1=J-c+1\lesssim\log p(J)\)
and \(2^{nL}=2^{nc}2^{nJ}\simeq2^{nc}p(J)\).
\end{proof}

\subsection{Restoring positive semidefiniteness}
\label{sec:psd-projected-covariance}

The truncation described in the previous subsection can result in indefinite matrices, even if $\bC$ is semidefinite. In the following, it will be useful to restore positive definiteness of the matrices through projection. To that end, let \(\mathbb S^p\) denote the real symmetric \(p\times p\) matrices and
\(\mathbb S^p_+\subset\mathbb S^p\) the positive-semidefinite cone.  For
\(A\in\mathbb S^p\) with spectral expansion \(A=\sum_k\lambda_kP_k\), where
the \(P_k\) are the orthogonal projections onto the eigenspaces, define the
positive and negative spectral parts by
\begin{equation}\label{eq:positive-spectral-part}
    A_\pm:=\sum_k\lambda_k^{\pm}P_k,
    \qquad
    \lambda^+:=\max\{\lambda,0\},
    \quad
    \lambda^-:=\max\{-\lambda,0\}.
\end{equation}
Then \(A=A_+-A_-\) and \(A_+A_-=0\).  For ease of notation later we write \(\widehat{\bC}^{L}:=(\bC^L)_+\).

\begin{lemma}\label{lem:psd-projection-nonexpansive}
The map \(\Pi_+:\mathbb S^p\to\mathbb S^p_+\), \(\Pi_+(A):=A_+\), is the
metric projection onto \(\mathbb S^p_+\) in the Frobenius inner product, and
is nonexpansive:
\begin{equation}\label{eq:psd-projection-nonexpansive}
    \|\Pi_+(A)-\Pi_+(B)\|_F
    \le
    \|A-B\|_F,
    \qquad A,B\in\mathbb S^p .
\end{equation}
Consequently, if \(\bC\succeq0\), then
\begin{equation}\label{eq:projected-scale-tail}
    \|\bC-\widehat{\bC}^{L}\|_F
    \le
    \|\bC-\bC^L\|_F
    \le
    \tr(D_r\bC D_r)2^{-rL}.
\end{equation}
\end{lemma}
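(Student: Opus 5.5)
The plan has three steps: identify \(A_+\) as the Frobenius-nearest point of the cone, obtain nonexpansiveness from the general theory of projections onto closed convex sets, and then combine with Theorem~\ref{thm:trace-controlled-scale-tail}.

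\textbf{Step 1: \(A_+\) is the metric projection.} Fix \(A=A_+-A_-\in\mathbb S^p\). I will check the variational characterisation of the metric projection onto the closed convex cone \(\mathbb S^p_+\): a point \(P\in\mathbb S^p_+\) is the projection of \(A\) if and only if
\[
    \langle A-P,\,S-P\rangle_F\le0\qquad\text{for all } S\in\mathbb S^p_+ .
\]
Take \(P=A_+\), so that \(A-P=-A_-\). For \(S\succeq0\) we have \(\langle -A_-,S\rangle_F=-\tr(A_-S)\le0\), because the trace of a product of two positive semidefinite matrices is nonnegative (write \(\tr(A_-S)=\tr(S^{1/2}A_-S^{1/2})\ge0\)). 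Moreover \(\langle -A_-,-A_+\rangle_F=\tr(A_-A_+)=0\), since \(A_+A_-=0\). Adding these gives the inequality above, so \(A_+\) is the projection. An alternative route is to diagonalise \(A=U\Lambda U^\top\) and use the unitary invariance of \(\|\cdot\|_F\), but the variational argument avoids any diagonal-dominance step.

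\textbf{Step 2: nonexpansiveness.} Let \(P=\Pi_+(A)\) and \(Q=\Pi_+(B)\). Apply the variational inequality twice: for \(A\) with \(S=Q\), and for \(B\) with \(S=P\). Adding the two gives
\[
    \|P-Q\|_F^2\le\langle A-B,\,P-Q\rangle_F .
\]
Cauchy--Schwarz then yields \eqref{eq:psd-projection-nonexpansive}. This is the standard firm-nonexpansiveness argument for projections onto closed convex sets; closedness and convexity of \(\mathbb S^p_+\) are immediate.

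\textbf{Step 3: the consequence.} If \(\bC\succeq0\), then \(\Pi_+(\bC)=\bC\). Applying \eqref{eq:psd-projection-nonexpansive} with \(A=\bC\) and \(B=\bC^L\) (which is symmetric) gives \(\|\bC-\widehat{\bC}^{L}\|_F\le\|\bC-\bC^L\|_F\). The second inequality in \eqref{eq:projected-scale-tail} is exactly the unweighted case of Theorem~\ref{thm:trace-controlled-scale-tail}.

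The only step with real content is the trace-positivity fact \(\tr(A_-S)\ge0\) used in Step 1. It is elementary, so I do not expect a genuine obstacle.
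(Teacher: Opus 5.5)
Your proof is correct and has the same three-step structure as the paper's. You identify \(A_+\) as the Frobenius metric projection, deduce nonexpansiveness from convex-projection theory, and apply it to \(\bC\) and \(\bC^L\) together with the unweighted case of Theorem~\ref{thm:trace-controlled-scale-tail}; the only difference is that you prove the two ingredients yourself (the variational inequality via \(\tr(A_-S)\ge0\) and \(A_+A_-=0\), and the firm-nonexpansiveness estimate), whereas the paper cites Higham's Theorem~2.1 and the general property of projections onto closed convex sets.
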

\begin{proof}
By \cite[Theorem~2.1]{Higham1988NearestPSD}, the positive spectral part \(A_+\) is the metric projection of a symmetric matrix \(A\) onto \(\mathbb S_+^p\) in the Frobenius inner product. Since the cone is closed and convex, its metric projection satisfies \eqref{eq:psd-projection-nonexpansive}. Since \(\Pi_+(\bC)=\bC\) for \(\bC\succeq0\), applying this estimate to \(\bC\) and \(\bC^L\), followed by Theorem~\ref{thm:trace-controlled-scale-tail}, proves \eqref{eq:projected-scale-tail}.
\end{proof}
\begin{remark}
Although the truncation preserves the diagonal, the projected covariance
need not. Since
\(\widehat{\bC}^L-\bC^L=(\bC^L)_-\succeq0\), the diagonal can only
increase. Moreover,
\[
    [(\bC^L)_-]_{\iota,\iota}
    \le
    \|(\bC^L)_-\|_{\op}
    \le
    \|\bC-\bC^L\|_{\op},
\]
where the second inequality follows from Weyl's inequality and
\(\bC\succeq0\). Hence, by Theorem~\ref{thm:trace-controlled-scale-tail},
\[
    0
    \le
    [\widehat{\bC}^L]_{\iota,\iota}-[\bC]_{\iota,\iota}
    \le
    \tr(D_r\bC D_r)2^{-rL}.
\]
Thus the marginal variances are perturbed by at most the same order as
the truncation error.
\end{remark}

\section{Wavelet discretisation}\label{sec:wavelet-basis}
We introduce the discretisation in wavelet coordinates and the sampling procedure.
\subsection{Wavelets}\label{sec:wavelet-discretisation}

With the same coarsest level \(j_0\) as above, we consider a biorthogonal multiresolution basis on \(L^2(\Omega)\)
\cite{CohenDaubechiesFeauveau1992,Stevenson2007Composite}: This means we use sequences of primal and dual basis functions
\begin{align*}
\Psi = (\Psi_j)_{j\geq j_0},\,\Psi_j=\{\psi_{j,k}\}_{k\in\nabla_j}\quad\text{and}\quad \widetilde\Psi = (\widetilde \Psi_j)_{j\geq j_0},\,
\widetilde\Psi_j=\{\widetilde\psi_{j,k}\}_{k\in\nabla_j},
\end{align*}
with the crucial biorthogonality property
\begin{align}
    \langle\psi_{j,k},\widetilde\psi_{j',k'}\rangle_{L^2(\Omega)}
    &=\delta_{j,j'}\delta_{k,k'},
    \label{eq:wavelet-biorthogonality}
\end{align}
such that for each level 
\begin{align}
    \#\nabla_j
    &\simeq2^{nj},
    \qquad j\ge j_0.
    \label{eq:dim-nablaj}
\end{align}
Naturally, these basis elements define a corresponding sequence of subspaces $W_j :={\rm span}\{\psi_{j,k}: k\in\nabla_j\}$ and $\widetilde W_j :={\rm span}\{\widetilde \psi_{j,k}: k\in\nabla_j\}$, which form the corresponding multiresolution analysis $V_j:= \oplus_{i=j_0}^j W_i$, $\widetilde V_j:= \oplus_{i=j_0}^j \widetilde W_i$ satisfying
\[
    V_{j_0}\subset V_{j_0+1}\subset\cdots\subset L^2(\Omega),
    \qquad
    \widetilde V_{j_0}\subset\widetilde V_{j_0+1}
    \subset\cdots\subset L^2(\Omega).
\]
Clearly, there holds 
\(\dim V_j=\dim\widetilde V_j\simeq2^{nj}\) and the level \(j\) corresponds to a spatial scale \(2^{-j}\).

We use the index sets \(\Lambda_J\), the level map \(j(\iota)\)
and the matrix size \(p(J)\) introduced in
Section~\ref{sec:matrix_compression}, with \(\nabla_j\) now
indexing the wavelets at level \(j\). The full wavelet index set is
\[
    \mathcal J:=\{(j,k):j\ge j_0,\ k\in\nabla_j\}.
\]
The relation \eqref{eq:dim-nablaj} gives
\eqref{eq:graded-level-cardinality}, and \(p(J)=\dim V_J\simeq2^{nJ}\).
For \(f\in L^2(\Omega)\), write
\[
    a_J(f)
    :=
    \bigl(\langle f,\psi_\iota\rangle_{L^2(\Omega)}\bigr)
    _{\iota\in\Lambda_J}
\]
for its primal-coefficient vector. The diagonal scale matrices
\(D_{t,J}\) are defined in \eqref{eq:dgp-Dbeta-def}.
\begin{assumption}
\label{as:wavelet-stability}
There are constants \(1\leq A_0,A_r,S_{\rm Riesz}<\infty\), independent of \(J\), such that
\begin{align}
    \|a_J(v)\|_2^2
    &\le
    A_0\|v\|_{L^2(\Omega)}^2,
    \qquad
    v\in L^2(\Omega),
    \label{eq:wavelet-analysis-L2}
    \\
    \|D_{r,J}a_J(v)\|_2^2
    &\le
    A_r\|v\|_{H^r(\Omega)}^2,
    \qquad
    v\in H^r(\Omega),
    \label{eq:wavelet-analysis-Hr}
\end{align}
and such that, for every coefficient vector
\(a=(a_\iota)_{\iota\in\Lambda_J}\in\R^{\Lambda_J}\),
\begin{align}
    \Bigl\|
        \sum_{\iota\in\Lambda_J}
        a_\iota\widetilde\psi_\iota
    \Bigr\|_{L^2(\Omega)}^2
    &\le
    S_{\rm Riesz}\|a\|_2^2,
    \label{eq:wavelet-synthesis-L2}
    \\
    \Bigl\|
        \sum_{\iota\in\Lambda_J}
        a_\iota\widetilde\psi_\iota
    \Bigr\|_{H^r(\Omega)}^2
    &\le
    S_{\rm Riesz}\|D_{r,J}a\|_2^2.
    \label{eq:wavelet-synthesis-Hr}
\end{align}
Here \(r\) is the integer fixed in
Assumption~\ref{as:dgp-fixed-regularity}.
\end{assumption}

The usual two-sided Sobolev Riesz-basis bounds, see e.g. {\cite[eq.~(3.11)]{Harbrecht2024}}, at orders \(0\) and \(r\)
imply these estimates. Assumptions~\ref{as:dgp-fixed-regularity} and
\ref{as:wavelet-stability} constrain \(r\) jointly: it must satisfy
\(n/2<r<\min_{\ell=0,\ldots,D}\nu_\ell\) and lie within the Sobolev
stability range of the wavelet pair.
We impose Assumption~\ref{as:wavelet-stability} throughout the remainder of the paper.

By \eqref{eq:wavelet-synthesis-Hr}, every finite dual-wavelet reconstruction belongs to \(H^r(\Omega)\). Since \(r>n/2\), it has a continuous representative on \(\overline\Omega\).

 If
\(
    f_J=\sum_{\iota\in\Lambda_J}a_\iota\widetilde\psi_\iota,
\)
then \eqref{eq:wavelet-biorthogonality} gives \(a=a_J(f_J)\). If \(f\) is a
centred Gaussian field with covariance operator \(\Cop\), then \(a_J(f)\) is a
centred Gaussian vector, with covariance matrix
\begin{equation}\label{eq:C_J-def}
\begin{aligned}
    \relax[\bC_J]_{\iota,\iota'}
    &:=\E[(a_J(f))_\iota(a_J(f))_{\iota'}]=\langle\Cop\psi_\iota,\psi_{\iota'}\rangle_{L^2(\Omega)}\\
    &=\iint_{\Omega\times\Omega}
    k(x,y)\psi_\iota(x)\psi_{\iota'}(y)\,dx\,dy,
    \quad \iota,\iota'\in\Lambda_J.
\end{aligned}
\end{equation}
where \(k\) is the covariance kernel of \(\Cop\).

\paragraph{Weighted trace and sample regularity.}
Let \(f_J:=\sum_{\iota\in\Lambda_J}(a_J(f))_\iota\widetilde\psi_\iota\) be the finite reconstruction of \(f\). By biorthogonality, \(a_J(f_J)=a_J(f)\), and the covariance identity \eqref{eq:C_J-def} gives
\[
    \E\|D_{r,J}a_J(f)\|_2^2
    =\tr(D_{r,J}\bC_JD_{r,J}).
\]
Applying the analysis bound \eqref{eq:wavelet-analysis-Hr} to \(f_J\) and the synthesis bound \eqref{eq:wavelet-synthesis-Hr} to its coefficients therefore gives
\begin{equation}\label{eq:trace-sobolev-interpretation}
    A_r^{-1}\tr(D_{r,J}\bC_JD_{r,J})
    \le
    \E\|f_J\|_{H^r(\Omega)}^2
    \le
    {S_{\rm Riesz}}\tr(D_{r,J}\bC_JD_{r,J}).
\end{equation}
Thus the weighted trace is equivalent to the mean-square \(H^r\)-norm of the finite reconstruction, with constants independent of \(J\). Note, this interpretation requires no assumption that the original field \(f\) has \(H^r\)-regular samples, since the analysis and synthesis bounds are applied only to its finite reconstruction.

\begin{remark}
Using entrywise decay gives a weaker version of Theorem~\ref{thm:trace-controlled-scale-tail}. By duality and \eqref{eq:wavelet-analysis-Hr}, \(\|\psi_\iota\|_{(H^r(\Omega))'}\le A_r^{1/2}2^{-rj(\iota)}\), so \(|[\bC]_{\iota,\iota'}|\le A_r\|\Cop\|_{(H^r)'\to H^r}\, 2^{-r(j(\iota)+j(\iota'))}\).
Summing over the discarded entries with \eqref{eq:graded-level-cardinality} gives, for \(0\le\beta<r-n/2\),
\[
    \|D_\beta(\bC-\bC^L)D_\beta\|_F
    \lesssim
    \|\Cop\|_{(H^r)'\to H^r}\,(L+1)^{1/2}2^{-(r-\beta-n/2)L},
\]
compared with \(\mathcal{O}(2^{-(r-\beta)L})\) from the trace argument.
\end{remark}

\subsection{Sampling}

Section~\ref{sec:matrix_compression} gives deterministic approximation estimates for a single truncated covariance matrix. We now apply this construction to the finite-dimensional wavelet DGP. Since compression changes the field supplied to the covariance in the next layer, the exact and approximate DGP constructions must be compared recursively.

We discretise the convolution Mat\'ern DGP of Section~\ref{sec:dgp_intro}, given by
\begin{align*}
    f_0&\sim\mathcal{GP}(0,k_{\M(\nu_0)}),\\
    f_\ell\mid f_{\ell-1}
    &\sim\mathcal{GP}\!\left(
        0,k_{\conv}^{F(f_{\ell-1}),k_{\M(\nu_\ell)}}
    \right),
    \qquad \ell=1,\dots,D,
\end{align*}
in the wavelet coordinates introduced in Section~\ref{sec:wavelet-discretisation}.

For every resolution level \(J'\ge j_0\), define the initial covariance
matrix
\[
    \bC_{0,J'}
    :=
    \bigl(
        \langle \Cop_0\psi_\iota,\psi_{\iota'}\rangle_{L^2(\Omega)}
    \bigr)_{\iota,\iota'\in\Lambda_{J'}},
\]
and, for \(\ell=1,\dots,D\) and \(g\in C(\overline\Omega)\), define
\[
    \bC_{\ell,J'}(g)
    :=
        \left(\langle
            \Cop_\ell(g)\psi_\iota,\psi_{\iota'}\rangle_{L^2(\Omega)}
    \right)_{\iota,\iota'\in\Lambda_{J'}}.
\]
By \eqref{eq:wavelet-synthesis-Hr}, every finite wavelet reconstruction appearing below lies in \(H^r(\Omega)\hookrightarrow C(\overline\Omega)\), so these matrices are defined for all inputs.
Let \(\mathbf C_{0,J'}^L\) and \(\mathbf C_{\ell,J'}^L(g)\) denote their
scale truncations according to \eqref{def:scale-compression}.  For the
remainder of this section fix the resolution \(J\) and suppress this subscript,
writing
\[
    \mathbf C_0:=\mathbf C_{0,J},\qquad
    \mathbf C_0^L:=\mathbf C_{0,J}^L,\qquad
    \mathbf C_\ell(g):=\mathbf C_{\ell,J}(g),\qquad
    \mathbf C_\ell^L(g):=\mathbf C_{\ell,J}^L(g).
\]
Define the projected sampling covariances
\begin{equation}\label{eq:dgp-projected-covariances}
    \widehat{\mathbf C}_0^L
    :=
    (\mathbf C_0^L)_+,
    \qquad
    \widehat{\mathbf C}_\ell^L(g)
    :=
    (\mathbf C_\ell^L(g))_+,
    \quad \ell=1,\dots,D.
\end{equation}

Since \(F\) is strictly positive and every finite wavelet reconstruction is continuous, the exact matrices are positive definite by Lemma~\ref{lem:conv-matern-integral-strict-positivity} and the linear independence of the primal wavelets \eqref{eq:wavelet-biorthogonality} make the exact matrices positive definite. The truncated matrix \(\mathbf C^L\) may be indefinite, but the projected matrices in \eqref{eq:dgp-projected-covariances} are positive semidefinite for every input and every \(L\), so each defines a, possibly degenerate, Gaussian distribution.

To sample from the DGP, we draw normal random numbers
\[
    z_0,\dots,z_D
    \stackrel{\mathrm{iid}}{\sim}
    \mathcal N(0,I_{p(J)}).
\] 
and define the exact and discretised layerwise sample coefficients by
\begin{align*}
    a_0
    &:=\mathbf C_0^{1/2}z_0,
    &
    \widehat a_0^L
    &:=(\widehat{\mathbf C}_0^L)^{1/2}z_0,
    \\
    a_\ell
    &:=\mathbf C_\ell(f_{\ell-1})^{1/2}z_\ell,
    &
    \widehat a_\ell^L
    &:=\widehat{\mathbf C}_\ell^L(\widehat f_{\ell-1}^L)^{1/2}z_\ell,
    \qquad \ell=1,\dots,D.
\end{align*}
The DGP samples can then be reconstructed by
\begin{equation}\label{eq:dgp-finite-reconstruction}
    f_\ell
    :=
    \sum_{\iota\in\Lambda_J}(a_\ell)_\iota\widetilde\psi_\iota,
    \qquad
    \widehat f_\ell^L
    :=
    \sum_{\iota\in\Lambda_J}(\widehat a_\ell^L)_\iota
    \widetilde\psi_\iota.
\end{equation}
In the following, we will denote the sequence of layerwise components $(f_\ell, a_\ell, \bC_\ell)_\ell$ as \emph{chains} of the DGP.

\section{Main result}\label{sec:main_error_analysis}

Our main result provides, with probability at least \(1-\eta\), a bound on the difference between the untruncated and approximate chains. The constants in the estimate depend on the failure probability \(\eta\) and deteriorate as \(\eta \to 0\).

\begin{theorem}
\label{thm:dgp-total-error-layer-D}
Suppose Assumptions~\ref{as:dgp-fixed-regularity} and
\ref{as:wavelet-stability} hold, let \(J\ge j_0\) and \(\eta\in(0,1)\) and let \(f_\ell\) and \(\widehat f_\ell^L\) be the
chains \eqref{eq:dgp-finite-reconstruction}.
There are finite constants \(P_0,\ldots,P_D\) and \(K^{\mathrm{cov}}_D\), independent of \(L\), such that for every \(L\in\N\), with probability at least \(1-\eta\),
\begin{equation}\label{eq:dgp-final-L2-error}
    \|f_\ell-\widehat f_\ell^L\|_{L^2(\Omega)}
    \le
    P_\ell 2^{-rL},
    \qquad
    \ell=0,\dots,D,
\end{equation}
\begin{equation}\label{eq:dgp-final-covariance-error}
    \|\CellJ{D}(f_{D-1})
    -\widehat{\mathbf C}_D^L(\widehat f_{D-1}^L)\|_F
    \le
    K_D^{\mathrm{cov}}2^{-rL}.
\end{equation} 
\end{theorem}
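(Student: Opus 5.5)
The proof is an induction over the layers \(\ell=0,\dots,D\). At each step we restrict to a high-probability event on which all exact samples \(f_0,\dots,f_{D-1}\) lie in a bounded \(H^r(\Omega)\)-ball and the Gaussian vectors \(z_\ell\) have controlled Euclidean norm.

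\textbf{The good event.} By \eqref{eq:trace-sobolev-interpretation}, \(\E\|f_\ell\|_{H^r}^2\le S_{\rm Riesz}\E\,\tr(D_r\bC_\ell(f_{\ell-1})D_r)\). The weighted trace bound of Lemma~\ref{lem:conv-matern-sobolev-trace} should give \(\tr(D_r\bC_\ell(g)D_r)\lesssim\) a constant depending only on upper and lower bounds of \(F(g)\). Those bounds in turn follow from \(\|g\|_{H^r}\) via Lemma~\ref{lem:dgp-scale-field-bounds}. This is a chain: the bound on \(\|f_\ell\|_{H^r}\) depends on the bound on \(\|f_{\ell-1}\|_{H^r}\). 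We therefore allocate failure probability \(\eta/(2D+2)\) per layer and apply Markov's inequality layerwise, conditionally on the previous layer lying in its ball \(B_{\ell-1}=\{\|g\|_{H^r}\le R_{\ell-1}\}\). This yields radii \(R_0,\dots,R_{D-1}\) independent of \(L\). We also intersect with \(\{\|z_\ell\|_2\le M\}\), with \(M\) depending on \(p(J)\) and \(\eta\); this is allowed because \(J\) is fixed and the constants need only be independent of \(L\).

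\textbf{Induction step.} We split
\[
\bC_\ell(f_{\ell-1})^{1/2}-\widehat\bC_\ell^L(\widehat f_{\ell-1}^L)^{1/2}
=\bigl[\bC_\ell(f_{\ell-1})^{1/2}-\bC_\ell(\widehat f^L_{\ell-1})^{1/2}\bigr]+\bigl[\bC_\ell(\widehat f^L_{\ell-1})^{1/2}-\widehat\bC_\ell^L(\widehat f_{\ell-1}^L)^{1/2}\bigr].
\]
For square roots we use the Powers--St{\o}rmer/Araki-type estimate \(\|A^{1/2}-B^{1/2}\|_F\le \lambda_{\min}(A)^{-1/2}\|A-B\|_F\) for \(A\succ0\), \(B\succeq0\).

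For the first bracket, Lemma~\ref{lem:dgp-L2-covariance-lipschitz} gives \(\|\bC_\ell(g)-\bC_\ell(h)\|_F\lesssim\|g-h\|_{L^2}\). Combined with the induction hypothesis, this bounds it by \(C P_{\ell-1}2^{-rL}\).

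For the second bracket, Lemma~\ref{lem:psd-projection-nonexpansive} bounds it by \(\tr(D_r\bC_\ell(\widehat f^L_{\ell-1})D_r)2^{-rL}\). This needs \(\widehat f^L_{\ell-1}\) in a bounded \(H^r\)-set. We obtain that from \eqref{eq:wavelet-synthesis-Hr} together with \(\|D_r(a-\widehat a)\|_2\le 2^{rJ}\|a-\widehat a\|_2\) (fine at fixed \(J\)) and \(\|a_{\ell-1}-\widehat a^L_{\ell-1}\|_2\le P_{\ell-1}M'\). Alternatively, since \(L\ge0\) is arbitrary, the perturbation is uniformly bounded by a constant independent of \(L\) using \(2^{-rL}\le1\).

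Multiplying by \(\|z_\ell\|_2\le M\) and applying \eqref{eq:wavelet-synthesis-L2} gives \(P_\ell\). The base case \(\ell=0\) uses only the second bracket with a deterministic \(\bC_0\). The bound \eqref{eq:dgp-final-covariance-error} is the same two-bracket split at the matrix level, without square roots, at \(\ell=D\).

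\textbf{Main obstacle.} The difficulty is the lower eigenvalue bound \(\lambda_{\min}(\bC_\ell(g))\ge c>0\), uniformly for \(g\) in the relevant \(H^r\)-ball, needed in the square-root estimate. We would use positive definiteness (Lemma~\ref{lem:conv-matern-integral-strict-positivity}) and the continuity of \(g\mapsto\bC_\ell(g)\). The set \(\{g\in V_J:\|g\|_{H^r}\le R\}\) is compact because it is finite-dimensional, so the minimum of \(\lambda_{\min}\) over it is attained and positive. The constant may depend badly on \(J\), which the statement permits.

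A secondary subtlety is that the good event must also constrain the approximate chain. The deterministic perturbation argument above handles this, since on the good event the approximate chain stays within distance \(O(1)\) of the exact one and hence in an enlarged ball \(R_{\ell}+1\).
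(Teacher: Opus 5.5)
Your argument is correct, but it splits the one-layer error differently from the paper, and that choice drives the rest. You pass through \(\bC_\ell(\widehat f^L_{\ell-1})\), the exact covariance at the \emph{approximate} input. Both the compression bound and the lower eigenvalue bound must therefore be evaluated at \(\widehat f^L_{\ell-1}\). This is why you need \(H^r\)-control of the approximate chain via the inverse estimate \(\|D_r v\|_2\le 2^{rJ}\|v\|_2\), and enlarged radii of the form \(R_{\ell-1}+(A_0S_{\rm Riesz})^{1/2}2^{rJ}P_{\ell-1}\).

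The paper instead passes through \(\bC_\ell^L(f_{\ell-1})\), the truncated matrix at the \emph{exact} input, and compares covariances before taking any square root. Since \(\bC_\ell(f_{\ell-1})=\Pi_+(\bC_\ell(f_{\ell-1}))\), two facts combine: \(\Pi_+\) is nonexpansive, and truncation only deletes entries, so the Lipschitz bound transfers to \(\bC_\ell^L\). Together they give \(\|\bC_\ell(f_{\ell-1})-\widehat\bC_\ell^L(\widehat f^L_{\ell-1})\|_F\le\mathcal T_\ell(B_{\ell-1,\varepsilon})2^{-rL}+\Lip_\ell e_{\ell-1}\). A single square-root estimate anchored at the coercive exact matrix \(\bC_\ell(f_{\ell-1})\) then suffices. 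Compression and coercivity are only ever needed along the exact chain, and no regularity of \(\widehat f^L_{\ell-1}\) is required.

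On the probabilistic side, you use Markov for the radii and a global bound \(\|z_\ell\|_2\le M\sim(p(J)/\eta)^{1/2}\) together with \(\|\cdot\|_{\op}\le\|\cdot\|_F\). The paper instead applies the Hsu--Kakade--Zhang quadratic-form inequality conditionally on \(\mathscr F_{\ell-1}\). This bounds the sampled coefficient error by \(q_\varepsilon\) times the Frobenius norm of the square-root difference.

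Your lower spectral bound uses compactness of the finite-dimensional set \(\{g\in\widetilde V_J:\|g\|_{H^r}\le R\}\). Note that the samples live in the dual span \(\widetilde V_J\), not \(V_J\). This is more elementary than the paper's weak-compactness argument over the whole \(H^r\)-ball, and it suffices because every input is a finite reconstruction.

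As for what each route buys: the paper's constants depend on \(J\) only through \(\kappa_{\ell,J}\), and each Gaussian concentration step contributes only a factor \(q_\varepsilon\sim\sqrt{\log(1/\eta)}\). Yours acquire additional factors \(\sqrt{p(J)}\) and \(2^{rJ}\), the latter entering \(\mathcal T_\ell\) and \(\kappa_{\ell,J}\) through the enlarged radii. You also get \(\eta^{-1/2}\)-type factors from Markov. The statement only asks for independence of \(L\), so this is permitted, but your bounds are considerably weaker.

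In exchange, your good event does not depend on \(L\), and everything after it is deterministic. So you obtain the bounds simultaneously for all \(L\in\N\) on a single event of probability at least \(1-\eta\). This is slightly stronger than the paper's conclusion, whose event \(\mathcal H\) is constructed for each fixed \(L\).
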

The constants in this theorem depend on \(J\) only through the lower spectral bounds of Lemma~\ref{lem:dgp-finite-resolution-coercivity}; explicit expressions are given in \eqref{eq:dgp-propagation-prefactor}--\eqref{eq:dgp-terminal-cov-prefactor}.
\paragraph{Outline of the proof.}
Write \(e_\ell:=\|f_\ell-\widehat f_\ell^L\|_{L^2(\Omega)}\) for \(\ell\ge0\). Nonexpansiveness of \(\Pi_+\) gives
\begin{equation}\label{eq:outline-splitting}
    \|\bC_\ell(f_{\ell-1})-\widehat{\bC}_\ell^L(\widehat f_{\ell-1}^L)\|_F
    \le
    \|\bC_\ell(f_{\ell-1})-\bC_\ell^L(f_{\ell-1})\|_F+\|\bC_\ell^L(f_{\ell-1})-\bC_\ell^L(\widehat f_{\ell-1}^L)\|_F.
\end{equation}
A weighted trace estimate bounds the first term by \(\mathcal{O}(2^{-rL})\) whenever \(f_{\ell-1}\) lies in a bounded \(H^r\)-set (Lemma~\ref{lem:dgp-uniform-trace-balls}), and Gaussian concentration shows that the exact chain lies in such sets with high probability (Proposition~\ref{prop:dgp-good-event}). A global Lipschitz estimate bounds the second term by a multiple of \(e_{\ell-1}\) (Lemma~\ref{lem:dgp-L2-covariance-lipschitz}). A lower eigenvalue bound for \(\bC_\ell(f_{\ell-1})\) (Lemma~\ref{lem:dgp-finite-resolution-coercivity}) and the square-root estimate~\eqref{eq:frobenius-square-root-one-gap} below turn \eqref{eq:outline-splitting} into the recursion \(e_\ell\le\alpha_\ell2^{-rL}+\gamma_\ell e_{\ell-1}\) (Proposition~\ref{prop:dgp-one-step-minimal}), and induction over the layers proves the theorem.

\section{Kernel properties}\label{sec:kernel-positivity}
\begingroup
\emergencystretch=2em
By Theorem~\ref{thm:trace-controlled-scale-tail} and \eqref{eq:trace-sobolev-interpretation}, the weighted trace \(\tr(D_{r,J}\bC_JD_{r,J})\) controls both the compression error and the expected squared \(H^r\)-norms of the sampled fields. The latter control yields the high probability sample-norm bounds needed to propagate errors through successive layers.
Although compression alone requires only \(\|D_{r,J}\bC_JD_{r,J}\|_F\), a bound on this smaller quantity does not in general control the trace uniformly in \(J\).
For example, \(\operatorname{diag}(1,1/2,\ldots,1/p)\) has bounded Frobenius norm but diverging trace as \(p\to\infty\). We therefore seek a trace bound uniform in \(J\) and explicit in \(\lambda\), supplying both the compression estimate and the sample bounds needed in Proposition~\ref{prop:dgp-good-event}.
\par
\endgroup

The following Lemma~\ref{lem:kernel_properties} is the technical heart of the proof of the main result. 
 All five parts of the lemma are proved in
Appendix~\ref{app:kernel-properties-proof}.

For \(B>0\), set
\[
    \mathcal B_r(B)
    :=
    \{g\in H^r(\Omega):\|g\|_{H^r(\Omega)}\le B\}.
\]
Since \(r>n/2\), Sobolev embedding gives
\(
    \|g\|_{L^\infty(\Omega)}
    \le
    C^{\mathrm{emb}}_{\Omega,r}\|g\|_{H^r(\Omega)}.
\)

For \(r<\nu\), define the quantity
\begin{equation*}
    \mathcal M_{r,\nu}
    :=
    \sum_{q=0}^{r}
    \left(\frac{\nu}{2}\right)^q
    \frac{\Gamma(\nu-q)}{\Gamma(\nu)}
\end{equation*}
as well as
\begin{align}\label{eq:dgp-trace-majorant}
    \mathcal T_\ell(B)
    &:=
    1+
    \sup_{g\in\mathcal B_r(B)}
    \sup_{J'\ge j_0}
    \tr\!\left(
        D_{r,J'}\mathbf C_{\ell,J'}(g)D_{r,J'}
    \right),\quad\text{ for }\ell=1,\ldots,D,\\
     \mathcal T_0
    &:=
    1+
    \sup_{J'\ge j_0}
    \tr\!\left(
        D_{r,J'}\mathbf C_{0,J'}D_{r,J'}
    \right)\nonumber.
\end{align}

\begin{lemma}\label{lem:kernel_properties}
Suppose Assumption~\ref{as:dgp-fixed-regularity} and Assumption~4.1 hold.
\begin{enumerate}[label=(\roman*),ref=\thetheorem(\roman*)]

\item\label{lem:conv-matern-sobolev-trace}
Let \(r<\nu\) and \(\lambda\in H^r(\Omega)\) satisfy
\(\lambda(x)\geq\underline\lambda>0\) for every
\(x\in\overline\Omega\), and let \(\bC_J\) be the matrix
\eqref{eq:C_J-def} for \(k_{\conv}^{\lambda,k_{\M(\nu)}}\). Then
\begin{equation}\label{eq:conv-matern-weighted-trace-direct}
  \sup_{J\geq j_0}\tr\bigl(D_{r,J}\bC_JD_{r,J}\bigr)
  \lesssim
  \sigma^2(1+\underline\lambda^{-1})^{2r}
  (1+\|\lambda\|_{H^r(\Omega)})^{2r}\mathcal M_{r,\nu}.
\end{equation}
The implicit constant depends only on \(\Omega,n,r,A_r\).

\item\label{lem:dgp-scale-field-bounds}\label{lem:dgp-uniform-trace-balls}
For every \(B>0\), \(\mathcal T_0<\infty\) and
\(\mathcal T_\ell(B)<\infty\) for \(\ell=1,\ldots,D\).
For every \(g\in\mathcal B_r(B)\), \(J\geq j_0\), and
\(L\in\mathbb N\),
\begin{equation}\label{eq:dgp-uniform-tail-bound}
  \|\CellJ{\ell}(g)-\CellJL{\ell}(g)\|_F
  \leq\mathcal T_\ell(B)2^{-rL},
  \qquad
  \|\mathbf C_0-\mathbf C_0^L\|_F
  \leq\mathcal T_0 2^{-rL}.
\end{equation}
More precisely, there is a finite nondecreasing function
\(R_{F,r}:[0,\infty)\to[0,\infty)\) such that
\begin{equation}\label{eq:dgp-composition-bound}
  \|F(g)\|_{H^r(\Omega)}\leq R_{F,r}(B),
  \qquad g\in\mathcal B_r(B).
\end{equation}
Set
\[
  \underline\lambda(B)
    :=\inf_{|t|\leq C^{\mathrm{emb}}_{\Omega,r}B}F(t),
  \qquad
  \overline\lambda(B)
    :=\sup_{|t|\leq C^{\mathrm{emb}}_{\Omega,r}B}F(t).
\]
For every \(g\in\mathcal B_r(B)\) and \(x\in\overline\Omega\),
\[
  0<\underline\lambda(B)\leq F(g(x))
    \leq\overline\lambda(B)<\infty,
\]
and
\begin{equation}\label{eq:dgp-trace-majorant-explicit}
  \mathcal T_\ell(B)
  \lesssim1+\sigma^2(1+\underline\lambda(B)^{-1})^{2r}
     (1+R_{F,r}(B))^{2r}\mathcal M_{r,\nu_\ell}.
\end{equation}
The implicit constant depends only on \(\Omega,n,r,A_r\).

\item\label{lem:dgp-L2-covariance-lipschitz}
For each \(\ell\in\{1,\ldots,D\}\), there is a constant
\(\Lip_\ell<\infty\), independent of \(J,L\) and the input fields,
such that, for all \(f,g\in C(\overline\Omega)\),
\begin{equation}\label{eq:dgp-global-frob-lipschitz}
\begin{aligned}
  \|\widehat{\mathbf C}_\ell^L(f)
      -\widehat{\mathbf C}_\ell^L(g)\|_F
  &\leq\|\CellJL{\ell}(f)-\CellJL{\ell}(g)\|_F\\&
  \leq\|\CellJ{\ell}(f)-\CellJ{\ell}(g)\|_F
  \leq\Lip_\ell\|f-g\|_{L^2(\Omega)}.
\end{aligned}
\end{equation}
One may take
\begin{equation}\label{eq:dgp-global-Lip-constant}
  \Lip_\ell
  :=2A_0|\Omega|^{1/2}\sigma^2
       \left(\frac n4+\frac1e\right)L_F,
\end{equation}
which is independent of \(\ell\).

\item\label{lem:conv-matern-integral-strict-positivity}
Let \(\nu>0\) and suppose that
\begin{equation}\label{eq:scale-field-bounds}
  \lambda\in C(\overline\Omega),
  \qquad 0<\underline\lambda\leq\lambda(x)
    \leq\overline\lambda<\infty,
  \qquad x\in\overline\Omega.
\end{equation}
Then \(k=k_{\conv}^{\lambda,k_{\M(\nu)}}\) is continuous,
symmetric, positive definite on $L^2(\Omega)$, i.e., there holds \(\int_\Omega\int_\Omega k(x,y)u(x)u(y)dxdy>0\) for every \(0\ne u\in L^2(\Omega)\) (note that positive definiteness on a finite set is already shown in~\cite{Paciorek2003}). In particular, the matrices \(\bC_J\) in
\eqref{eq:C_J-def} are positive definite.

\item\label{lem:dgp-finite-resolution-coercivity}
For every fixed resolution level \(J\):
\begin{enumerate}
\item the initial covariance matrix \(\mathbf C_0\) is symmetric
positive definite, with \(\kappa_{0,J}:=\lambda_{\min}(\mathbf C_0)>0\);
\item for every \(\ell\in\{1,\ldots,D\}\) and \(B>0\),
\[
  \kappa_{\ell,J}(B)
  :=\inf_{\|g\|_{H^r(\Omega)}\leq B}
       \lambda_{\min}(\CellJ{\ell}(g))>0.
\]
\end{enumerate}

\end{enumerate}
\end{lemma}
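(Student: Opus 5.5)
The proof takes the five parts in turn. Part (i) is the hardest; (ii)–(v) reduce to it together with Sobolev embedding, compactness and the positivity statement (iv).

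\emph{Part (i).} By \eqref{eq:trace-sobolev-interpretation} it suffices to bound \(\E\|f_J\|_{H^r}^2\), or more directly \(\sum_\iota 2^{2rj(\iota)}\langle\Cop\psi_\iota,\psi_\iota\rangle\). I would use the Gaussian-mixture representation of the Mat\'ern profile from Appendix~\ref{prop:matern-gaussian-mixture}, \(\phi_{\nu,1}(s)=\sigma^2\int_0^\infty e^{-s^2 w}\,\mu_\nu(dw)\). For each fixed \(w\), the convolution kernel with Gaussian profile is an explicit Paciorek kernel. Up to a constant, it factorises as
\[
\int_{\mathbb R^n} g_w(x,z)\,g_w(y,z)\,dz,
\qquad
g_w(x,z)\propto \lambda(x)^{-n/4}\exp\bigl(-2w\|x-z\|^2/\lambda(x)\bigr),
\]
which reflects the convolution construction. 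Hence
\[
\tr(D_r\bC D_r)=\sigma^2\int\!\!\int_{\mathbb R^n}\sum_\iota 2^{2rj}\langle g_w(\cdot,z),\psi_\iota\rangle^2\,dz\,\mu_\nu(dw).
\]
Applying \eqref{eq:wavelet-analysis-Hr} bounds this by
\[
A_r\sigma^2\int\!\!\int\|g_w(\cdot,z)\|_{H^r(\Omega)}^2\,dz\,\mu_\nu(dw).
\]
The main obstacle is then to estimate the \(H^r\)-norm of \(x\mapsto g_w(x,z)\). Derivatives up to order \(r\) fall either on the Gaussian, producing factors \(w^{q/2}\lambda^{-q/2}\), or on \(\lambda\) through the chain rule. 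The latter is handled by the integer Sobolev product estimates of Lemma~\ref{lem:integer-sobolev-product}, giving factors \((1+\underline\lambda^{-1})^{2r}(1+\|\lambda\|_{H^r})^{2r}\). Integrating the Gaussian square in \(z\) cancels the normalisation. The remaining \(w\)-moments \(\int w^q\,\mu_\nu(dw)\) are exactly the finite mixture moments \eqref{eq:matern-mixture-moments} for \(q\le r<\nu\), and these sum to \(\mathcal M_{r,\nu}\). The estimate is uniform in \(J\) because only the analysis bound was used.

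\emph{Part (ii).} For \(g\in\mathcal B_r(B)\), Sobolev embedding gives \(|g|\le C^{\mathrm{emb}}_{\Omega,r}B\), hence the pointwise bounds on \(F(g)\), with \(\underline\lambda(B)>0\) by continuity and positivity of \(F\) on a compact interval. The Fa\`a di Bruno formula with \(F\in C^r\) and the integer product estimates gives \(\|F(g)\|_{H^r}\le R_{F,r}(B)\). Inserting these into (i) yields \eqref{eq:dgp-trace-majorant-explicit}, and in particular finiteness of \(\mathcal T_\ell(B)\). For \(\mathcal T_0\), apply (i) with \(\lambda\equiv\lambda_0^2\). The tail bound \eqref{eq:dgp-uniform-tail-bound} then follows from Theorem~\ref{thm:trace-controlled-scale-tail}.

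\emph{Part (iii).} The first two inequalities are Lemma~\ref{lem:psd-projection-nonexpansive} and the fact that truncation only zeroes entries. For the third, write
\[
\bC(f)-\bC(g)=\bigl(\langle(\Cop(f)-\Cop(g))\psi_\iota,\psi_{\iota'}\rangle\bigr).
\]
Its Frobenius norm is at most \(A_0\) times the Hilbert--Schmidt norm of \(\Cop(f)-\Cop(g)\), by \eqref{eq:wavelet-analysis-L2} applied on both sides, and that norm equals \(\|k_f-k_g\|_{L^2(\Omega\times\Omega)}\). Lemma~\ref{lem:conv-matern-log-scale-lipschitz} gives a pointwise bound
\[
|k_f-k_g|(x,y)\le\sigma^2\Bigl(\tfrac n4+\tfrac1e\Bigr)\bigl(|\log F(f(x))-\log F(g(x))|+|\log F(f(y))-\log F(g(y))|\bigr),
\]
and \(|\log F(f)-\log F(g)|\le L_F|f-g|\). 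Taking the \(L^2(\Omega\times\Omega)\)-norm of the right-hand side gives the factor \(2|\Omega|^{1/2}\).

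\emph{Parts (iv) and (v).} For (iv), continuity follows from continuity of \(\lambda\) and \(\phi_{\nu,1}\). For positive definiteness, use the same mixture and factorisation as in (i):
\[
\iint k\,u\,u=\sigma^2\int\!\!\int_{\mathbb R^n}\Bigl(\int_\Omega g_w(x,z)u(x)\,dx\Bigr)^2dz\,\mu_\nu(dw)\ \ge 0.
\]
If this vanishes, then \(\int_\Omega g_w(x,z)u(x)\,dx=0\) for all \(z\) and a.e.\ \(w\). This is a Gaussian transform of \(u\lambda^{-n/4}\) with variable width. I would use analyticity in \(z\) and injectivity of the Weierstrass-type transform (letting \(w\to\infty\) gives an approximate identity) to conclude \(u=0\). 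Positive definiteness of \(\bC_J\) then follows from linear independence of the \(\psi_\iota\). For (v), part (a) is immediate from (iv). For part (b), \(\mathcal B_r(B)\) is compact in \(C(\overline\Omega)\) by Rellich, since \(r>n/2\). The map \(g\mapsto\lambda_{\min}(\bC_{\ell,J}(g))\) is continuous by (iii), and it is positive by (iv) together with the pointwise bounds from (ii). Hence its infimum over \(\mathcal B_r(B)\) is attained and positive.
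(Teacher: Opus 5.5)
Your proposal is correct and follows essentially the paper's route in all five parts. That means the Gaussian-mixture feature factorisation with the $H^r$ analysis bound and derivative bookkeeping for (i), Fa\`a di Bruno with Sobolev embedding for (ii), the log-scale Lipschitz estimate with two-sided $L^2$ analysis for (iii), the feature representation with $a\to\infty$ for (iv), and compactness of $\mathcal B_r(B)$ in $C(\overline\Omega)$ for (v). The remaining differences are technical: the paper first proves (i) and (ii) for smooth $\lambda$ and $g$ and passes to the limit by mollification, which you also need (or a weak chain rule in $H^r$) to justify the derivative expansion for $\lambda\in H^r$; and in (iv) it takes $a\to\infty$ after a Fourier transform, where $e^{-\lambda(x)\|\xi\|_2^2/(8a)}\to1$ makes dominated convergence immediate, whereas your spatial approximate-identity argument needs an extra domination-and-density step because the Gaussian width varies with $x$.
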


\section{Error analysis}\label{sec:main_proof}
To apply parts \ref{lem:conv-matern-sobolev-trace} and \ref{lem:conv-matern-integral-strict-positivity} of Lemma~\ref{lem:kernel_properties} at layer \(\ell\), we take \(\lambda=F(f_{\ell-1})\), the squared local correlation length at layer \(\ell\). We therefore need compressibility and positivity uniformly over the bounded subsets of \(H^r(\Omega)\) in which the exact chain lies with high probability (Proposition~\ref{prop:dgp-good-event}). We emphasise that no regularity control of the approximated chain is needed, due to the global Lipschitz stability of the covariance matrices shown in Lemma~\ref{lem:dgp-L2-covariance-lipschitz}.
\subsection{Bounds on sample norms}
We first bound the \(H^r\)-norms of the exact chain of layerwise DGP components with high probability. On the resulting events, the trace and lower eigenvalue bounds of Lemma~\ref{lem:kernel_properties} hold with deterministic constants. For \(\ell=0,\ldots,D\), let \(\mathscr F_\ell:=\sigma(z_0,\ldots,z_\ell)\).
By Lemma~\ref{lem:dgp-L2-covariance-lipschitz},
Lemma~\ref{lem:psd-projection-nonexpansive}, and continuity of the
principal square root on the positive-semidefinite cone, the fields
\(f_\ell\) and \(\widehat f_\ell^L\) are
\(\mathscr F_\ell\)-measurable. Moreover, for
\(\ell=0,\ldots,D-1\), \(z_{\ell+1}\) is independent of
\(\mathscr F_\ell\).

We use the Gaussian quadratic-form inequality
\cite[Proposition~1.1]{HsuKakadeZhang2012}: if
\(z\sim\mathcal N(0,I_p)\), \(\Sigma\succeq0\), and \(t>0\), then
\begin{equation}\label{eq:dgp-gaussian-qf}
    \Prob\!\left(
        z^\top\Sigma z
        \le
        \tr(\Sigma)+2\sqrt{\tr(\Sigma^2)t}
        +2\|\Sigma\|_{\op}t
    \right)
    \ge1-e^{-t}.
\end{equation}
For \(\varepsilon\in(0,1)\),  set
\(q_\varepsilon
    :=
    \left(
        1+2\sqrt{\log(1/\varepsilon)}+2\log(1/\varepsilon)
    \right)^{1/2}\).
We will use \eqref{eq:dgp-gaussian-qf} in the following form.
\begin{lemma}
\label{lem:dgp-weighted-gaussian-radius}
Let \(M\in\R^{m\times p}\) and \(z\sim\mathcal N(0,I_p)\). Then
\[
    \Prob\!\left(
        \|Mz\|_2\le q_\varepsilon\|M\|_F
    \right)
    \ge1-\varepsilon.
\]
In particular, if \(C\) is a positive semidefinite matrix with
\(\tr(D_rCD_r)\le T\), then
\[
    \Prob\!\left(
        \|D_rC^{1/2}z\|_2^2\le Tq_\varepsilon^2
    \right)
    \ge1-\varepsilon.
\]
\end{lemma}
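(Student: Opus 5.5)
We apply the Gaussian quadratic-form inequality \eqref{eq:dgp-gaussian-qf} with \(\Sigma:=M^\top M\), which is positive semidefinite, and \(t:=\log(1/\varepsilon)>0\), so that \(e^{-t}=\varepsilon\). With this choice \(z^\top\Sigma z=\|Mz\|_2^2\) and \(\tr(\Sigma)=\|M\|_F^2\). The task is then to bound the two remaining spectral quantities of \(\Sigma\) by \(\|M\|_F^2\).

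For the second-order terms we use positive semidefiniteness of \(\Sigma\). Its eigenvalues \(\mu_k\ge0\) satisfy \(\sum_k\mu_k=\tr(\Sigma)\). Hence \(\tr(\Sigma^2)=\sum_k\mu_k^2\le(\sum_k\mu_k)^2=\|M\|_F^4\), and also \(\|\Sigma\|_{\op}=\max_k\mu_k\le\|M\|_F^2\). Substituting into the threshold of \eqref{eq:dgp-gaussian-qf} gives
\[
    \tr(\Sigma)+2\sqrt{\tr(\Sigma^2)t}+2\|\Sigma\|_{\op}t
    \le
    \|M\|_F^2\bigl(1+2\sqrt{\log(1/\varepsilon)}+2\log(1/\varepsilon)\bigr)
    =q_\varepsilon^2\|M\|_F^2 .
\]
Consequently the event in \eqref{eq:dgp-gaussian-qf} is contained in \(\{\|Mz\|_2^2\le q_\varepsilon^2\|M\|_F^2\}\), which therefore has probability at least \(1-\varepsilon\). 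Taking square roots gives the first claim.

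For the second claim we take \(M:=D_rC^{1/2}\), with \(C^{1/2}\) the symmetric principal square root. Then
\[
    \|M\|_F^2=\tr\bigl(D_rC^{1/2}C^{1/2}D_r\bigr)=\tr(D_rCD_r)\le T,
\]
using that \(D_r\) is symmetric and the cyclicity of the trace. On the event from the first part, \(\|D_rC^{1/2}z\|_2^2\le q_\varepsilon^2\|M\|_F^2\le Tq_\varepsilon^2\), which is the stated bound.

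No step is a genuine obstacle. The only point needing care is the choice \(t=\log(1/\varepsilon)\), together with the crude bounds \(\tr(\Sigma^2)\le\tr(\Sigma)^2\) and \(\|\Sigma\|_{\op}\le\tr(\Sigma)\), which is what makes the constant reduce exactly to \(q_\varepsilon^2\).
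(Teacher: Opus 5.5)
Your proposal is correct and follows the paper's proof essentially verbatim: the quadratic-form inequality with \(\Sigma=M^\top M\) and \(t=\log(1/\varepsilon)\), the bounds \(\tr(\Sigma^2)\le\tr(\Sigma)^2\) and \(\|\Sigma\|_{\op}\le\tr(\Sigma)\), and then \(M=D_rC^{1/2}\) with \(\|M\|_F^2=\tr(D_rCD_r)\le T\). You have also written out the substitution showing the threshold equals \(q_\varepsilon^2\|M\|_F^2\), which the paper leaves implicit.
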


\begin{proof}
Set \(\Sigma:=M^\top M\).  Then
\(z^\top\Sigma z=\|Mz\|_2^2\),
\(\tr(\Sigma)=\|M\|_F^2\),
\(\|\Sigma\|_{\op}\le\tr(\Sigma)\), and
\(\tr(\Sigma^2)\le\tr(\Sigma)^2\).  Taking
\(t=\log(1/\varepsilon)\) in \eqref{eq:dgp-gaussian-qf} proves the first
claim.  For the second take \(M=D_rC^{1/2}\), for which
\[
    \|M\|_F^2
    =\tr(D_rCD_r)
    \le T.
\]
\end{proof}
Using Lemma~\ref{lem:dgp-weighted-gaussian-radius} and the synthesis bound \eqref{eq:wavelet-synthesis-Hr}, define the deterministic radii recursively by
\begin{equation}\label{eq:dgp-radius-recursion}
    B_{0,\varepsilon}
    :=
    {S_{\rm Riesz}}^{1/2}\mathcal T_0^{1/2}q_\varepsilon,
    \qquad
    B_{\ell,\varepsilon}
    :=
    {S_{\rm Riesz}}^{1/2}
    \mathcal T_\ell(B_{\ell-1,\varepsilon})^{1/2}
    q_\varepsilon,
    \quad \ell=1,\dots,D-1.
\end{equation}
Define the events
\[
    \mathcal E_0
    :=
    \{\|f_0\|_{H^r}\le B_{0,\varepsilon}\},\qquad
    \mathcal E_\ell
    :=
    \mathcal E_{\ell-1}
    \cap
    \{\|f_\ell\|_{H^r}\le B_{\ell,\varepsilon}\},
    \quad \ell=1,\dots,D-1,
\]
which are nested, \(\mathcal E_{D-1}\subseteq\cdots\subseteq\mathcal E_0\), and \(\mathcal E_\ell\in\mathscr F_\ell\). 
\begin{proposition}\label{prop:dgp-good-event}
For every \(\ell=0,\dots,D-1\), \(\Prob(\mathcal E_\ell)\ge1-(\ell+1)\varepsilon\).
\end{proposition}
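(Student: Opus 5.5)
We argue by induction on \(\ell\), bounding at each step the probability of leaving the good event by \(\varepsilon\) through Lemma~\ref{lem:dgp-weighted-gaussian-radius}. The \(H^r\)-norm of each exact sample is controlled by its weighted coefficient norm, and the trace majorants of Lemma~\ref{lem:kernel_properties} give deterministic trace bounds once the previous field is confined to a ball.

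\emph{Base case \(\ell=0\).} Since \(f_0=\sum_\iota(a_0)_\iota\widetilde\psi_\iota\), the synthesis bound \eqref{eq:wavelet-synthesis-Hr} gives
\[
\|f_0\|_{H^r}^2\le S_{\rm Riesz}\|D_r\mathbf C_0^{1/2}z_0\|_2^2 .
\]
By definition of \(\mathcal T_0\) in \eqref{eq:dgp-trace-majorant}, we have \(\tr(D_r\mathbf C_0D_r)\le\mathcal T_0\), and \(\mathcal T_0<\infty\) by Lemma~\ref{lem:dgp-uniform-trace-balls}. The second claim of Lemma~\ref{lem:dgp-weighted-gaussian-radius} with \(T=\mathcal T_0\) then yields \(\|D_r\mathbf C_0^{1/2}z_0\|_2^2\le\mathcal T_0q_\varepsilon^2\) with probability at least \(1-\varepsilon\). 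On that event \(\|f_0\|_{H^r}\le B_{0,\varepsilon}\), so \(\Prob(\mathcal E_0)\ge1-\varepsilon\).

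\emph{Induction step.} Assume \(\Prob(\mathcal E_{\ell-1})\ge1-\ell\varepsilon\) for some \(1\le\ell\le D-1\). Write
\[
\Prob(\mathcal E_\ell^c)\le\Prob(\mathcal E_{\ell-1}^c)+\Prob\bigl(\mathcal E_{\ell-1}\cap\{\|f_\ell\|_{H^r}>B_{\ell,\varepsilon}\}\bigr).
\]
It suffices to show the second term is at most \(\varepsilon\), and we condition on \(\mathscr F_{\ell-1}\).

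\begin{enumerate}
\item On \(\mathcal E_{\ell-1}\in\mathscr F_{\ell-1}\), the field \(f_{\ell-1}\) is \(\mathscr F_{\ell-1}\)-measurable and lies in \(\mathcal B_r(B_{\ell-1,\varepsilon})\).
\item Hence the matrix \(C:=\mathbf C_\ell(f_{\ell-1})\) is \(\mathscr F_{\ell-1}\)-measurable and positive semidefinite. By the definition \eqref{eq:dgp-trace-majorant}, it satisfies \(\tr(D_rCD_r)\le\mathcal T_\ell(B_{\ell-1,\varepsilon})\), which is finite by Lemma~\ref{lem:dgp-uniform-trace-balls}.
\item Since \(z_\ell\) is independent of \(\mathscr F_{\ell-1}\), it is conditionally \(\mathcal N(0,I)\). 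Lemma~\ref{lem:dgp-weighted-gaussian-radius} applied with the frozen \(C\) gives \(\|D_rC^{1/2}z_\ell\|_2^2\le\mathcal T_\ell(B_{\ell-1,\varepsilon})q_\varepsilon^2\) with conditional probability at least \(1-\varepsilon\), almost surely on \(\mathcal E_{\ell-1}\).
\item The synthesis bound \eqref{eq:wavelet-synthesis-Hr} converts this into \(\|f_\ell\|_{H^r}\le B_{\ell,\varepsilon}\) by the recursion \eqref{eq:dgp-radius-recursion}.
\end{enumerate}

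Taking expectations of \(\mathbf 1_{\mathcal E_{\ell-1}}\Prob(\|f_\ell\|_{H^r}>B_{\ell,\varepsilon}\mid\mathscr F_{\ell-1})\le\varepsilon\) bounds the second term by \(\varepsilon\). Combined with the induction hypothesis this gives \(\Prob(\mathcal E_\ell)\ge1-(\ell+1)\varepsilon\).

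The only delicate step is the conditioning argument: the covariance is random. To handle this rigorously, we apply Lemma~\ref{lem:dgp-weighted-gaussian-radius} to the deterministic matrix \(C(g)\) for each fixed \(g\) and integrate via the freezing lemma, using the independence of \(z_\ell\) from \(\mathscr F_{\ell-1}\). This requires the map \(g\mapsto\mathbf C_\ell(g)^{1/2}\) to be measurable, which follows from Lemma~\ref{lem:dgp-L2-covariance-lipschitz} and the continuity of the square root.
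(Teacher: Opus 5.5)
Your proposal is correct and follows essentially the same route as the paper. Both arguments handle the base case via Lemma~\ref{lem:dgp-weighted-gaussian-radius} and the synthesis bound \eqref{eq:wavelet-synthesis-Hr}. The induction then conditions on \(\mathscr F_{\ell-1}\), bounds \(\Prob(\mathcal E_{\ell-1}\cap\{\|f_\ell\|_{H^r}>B_{\ell,\varepsilon}\})\le\varepsilon\) by the law of total expectation, and splits \(\mathcal E_\ell^c\) in the same way. Your explicit appeal to the freezing lemma and to measurability of \(g\mapsto\mathbf C_\ell(g)^{1/2}\) just makes rigorous the conditioning step that the paper states informally.
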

\begin{proof}
For \(\ell=0\), \(a_0=\mathbf C_0^{1/2}z_0\) and \(\tr(D_r\mathbf C_0D_r)\le\mathcal T_0\). Hence Lemma~\ref{lem:dgp-weighted-gaussian-radius} and \eqref{eq:wavelet-synthesis-Hr} give
\(
    \|f_0\|_{H^r}
    \le
    {S_{\rm Riesz}}^{1/2}\mathcal T_0^{1/2}q_\varepsilon
    =
    B_{0,\varepsilon}
\)
with probability at least \(1-\varepsilon\). 
\begingroup
\emergencystretch=2em
Thus \(\Prob(\mathcal E_0^c)\le\varepsilon\).
Let \(\ell\ge1\). On \(\mathcal E_{\ell-1}\), we have \(\|f_{\ell-1}\|_{H^r}\le B_{\ell-1,\varepsilon}\),
and hence, by \eqref{eq:dgp-trace-majorant}, \(
    \tr\!\left(
        D_r\CellJ{\ell}(f_{\ell-1})D_r
    \right)
    \le
    \mathcal T_\ell(B_{\ell-1,\varepsilon})\).
Conditionally on \(\mathscr F_{\ell-1}\), the covariance matrix above is
fixed and \(z_\ell\) is a standard Gaussian vector independent of
\(\mathscr F_{\ell-1}\). 
\par
\endgroup
Lemma~\ref{lem:dgp-weighted-gaussian-radius} and \eqref{eq:wavelet-synthesis-Hr} give
\[
    \Prob\!\left(
        \|f_\ell\|_{H^r}>B_{\ell,\varepsilon}
        \,\middle|\,
        \mathscr F_{\ell-1}
    \right)
    \le
    \varepsilon
    \qquad\text{on }\mathcal E_{\ell-1}.
\]
Since \(\mathcal E_{\ell-1}\in\mathscr F_{\ell-1}\), the law of total expectation
gives
\begin{align*}
    \Prob\!\left(
        \mathcal E_{\ell-1}
        \cap
        \{\|f_\ell\|_{H^r}>B_{\ell,\varepsilon}\}
    \right)&=
    \E\!\left[
        \mathbf 1_{\mathcal E_{\ell-1}}
        \Prob\!\left(
            \|f_\ell\|_{H^r}>B_{\ell,\varepsilon}
            \,\middle|\,
            \mathscr F_{\ell-1}
        \right)
    \right]\\&\le
\varepsilon\E\!\left[
    \mathbf 1_{\mathcal E_{\ell-1}}
\right]=
    \varepsilon\Prob(\mathcal E_{\ell-1})\le
    \varepsilon.
\end{align*}
Since \(
    \mathcal E_\ell^c
    =
    \mathcal E_{\ell-1}^c
    \cup
    \left(
        \mathcal E_{\ell-1}
        \cap\{\|f_\ell\|_{H^r}>B_{\ell,\varepsilon}\}
    \right)\),
we have
\(\Prob(\mathcal E_\ell^c)\le\Prob(\mathcal E_{\ell-1}^c)+\varepsilon\) and induction gives the result.
\end{proof}

\subsection{One-layer error bound}
\label{sec:dgp-one-step-final-error}
The goal is to establish an estimate of the form
\begin{equation}\label{eq:one-step-outline}
    e_\ell\le\alpha_\ell2^{-rL}+\gamma_\ell e_{\ell-1},
    \qquad
    e_\ell:=\|f_\ell-\widehat f_\ell^{\,L}\|_{L^2(\Omega)},
\end{equation}
where \(\alpha_\ell\) bounds the compression error at layer \(\ell\) and \(\gamma_\ell\) quantifies the amplification of the error inherited from layer \(\ell-1\).
The projected covariance may be singular, so we use an obvious extension to semidefinite matrices of the standard square-root perturbation bound (see, e.g.~\cite[Theorem~6.2]{Higham2008Functions}): 
If \(A\) and \(B\) are symmetric positive
semidefinite, then
\begin{equation}\label{eq:frobenius-square-root}
    \|A^{1/2}-B^{1/2}\|_F
    \le
    \frac{\|A-B\|_F}
    {\sqrt{\lambda_{\min}(A)}+\sqrt{\lambda_{\min}(B)}}.
\end{equation}
In particular, if \(A\succeq\kappa I\) for some \(\kappa>0\), then
\begin{equation}\label{eq:frobenius-square-root-one-gap}
    \|A^{1/2}-B^{1/2}\|_F
    \le
    \kappa^{-1/2}\|A-B\|_F.
\end{equation}
The constants in \eqref{eq:one-step-outline} are given by 
\[
    \alpha_0
    :=
    {S_{\rm Riesz}}^{1/2}
    \frac{\mathcal T_0}{\sqrt{\kappa_{0,J}}}
    q_\varepsilon,
\]
and for \(\ell=1,\dots,D\)
\[
    \alpha_\ell
    :=
    {S_{\rm Riesz}}^{1/2}
    \frac{\mathcal T_\ell(B_{\ell-1,\varepsilon})}
    {\sqrt{\kappa_{\ell,J}(B_{\ell-1,\varepsilon})}}
    q_\varepsilon,\qquad
    \gamma_\ell
    :=
    {S_{\rm Riesz}}^{1/2}
    \frac{\Lip_\ell}
    {\sqrt{\kappa_{\ell,J}(B_{\ell-1,\varepsilon})}}
    q_\varepsilon.
\]
Since the initial covariances are deterministic, a bound on \(e_0\) is easily obtained in the following lemma.
\begin{lemma}\label{lem:dgp-initial-projected-error}
For every \(L\), with probability at least \(1-\varepsilon\),
\begin{equation}\label{eq:dgp-initial-sample-error}
    e_0\le\alpha_02^{-rL}.
\end{equation}
\end{lemma}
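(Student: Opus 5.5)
We would reduce the field error to a coefficient error and then apply the square-root perturbation bound together with the Gaussian radius lemma. By the synthesis bound \eqref{eq:wavelet-synthesis-L2} and linearity of the reconstruction \eqref{eq:dgp-finite-reconstruction},
\[
    e_0=\Bigl\|\sum_{\iota\in\Lambda_J}(a_0-\widehat a_0^L)_\iota\widetilde\psi_\iota\Bigr\|_{L^2(\Omega)}
    \le S_{\rm Riesz}^{1/2}\|a_0-\widehat a_0^L\|_2
    = S_{\rm Riesz}^{1/2}\bigl\|\bigl(\mathbf C_0^{1/2}-(\widehat{\mathbf C}_0^L)^{1/2}\bigr)z_0\bigr\|_2 .
\]
The matrix \(M:=\mathbf C_0^{1/2}-(\widehat{\mathbf C}_0^L)^{1/2}\) is deterministic: \(\mathbf C_0\) does not depend on any sample, and \(\widehat{\mathbf C}_0^L=(\mathbf C_0^L)_+\) is a deterministic function of it. Hence the first part of Lemma~\ref{lem:dgp-weighted-gaussian-radius} applies directly. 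It gives \(\|Mz_0\|_2\le q_\varepsilon\|M\|_F\) with probability at least \(1-\varepsilon\).

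It remains to bound \(\|M\|_F\) deterministically. By Lemma~\ref{lem:dgp-finite-resolution-coercivity}(a), \(\mathbf C_0\succeq\kappa_{0,J}I\) with \(\kappa_{0,J}>0\). The matrix \(\widehat{\mathbf C}_0^L\) is positive semidefinite. Therefore the one-gap square-root estimate \eqref{eq:frobenius-square-root-one-gap} gives
\[
    \|M\|_F\le\kappa_{0,J}^{-1/2}\|\mathbf C_0-\widehat{\mathbf C}_0^L\|_F .
\]
Next, Lemma~\ref{lem:psd-projection-nonexpansive}, in the form \eqref{eq:projected-scale-tail}, gives
\[
    \|\mathbf C_0-\widehat{\mathbf C}_0^L\|_F\le\tr(D_r\mathbf C_0D_r)\,2^{-rL}.
\]
By the definition \eqref{eq:dgp-trace-majorant}, \(\tr(D_r\mathbf C_0D_r)\le\mathcal T_0\), and \(\mathcal T_0\) is finite by Lemma~\ref{lem:dgp-uniform-trace-balls}. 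The same bound also follows from \eqref{eq:dgp-uniform-tail-bound} combined with nonexpansiveness. Combining the three displays gives \(e_0\le S_{\rm Riesz}^{1/2}q_\varepsilon\,\mathcal T_0\,\kappa_{0,J}^{-1/2}2^{-rL}=\alpha_02^{-rL}\) on the event of probability at least \(1-\varepsilon\).

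The only point needing care is the square-root estimate \eqref{eq:frobenius-square-root} when \(B\) is singular. Since the paper states it as an available extension, we simply invoke it. If justification were required, we would use the standard argument: write \(A-B=A^{1/2}X+XB^{1/2}\) with \(X=A^{1/2}-B^{1/2}\), and work in a joint eigenbasis representation. Each entry then satisfies \(|X_{ij}|=|(A-B)_{ij}|/(\sqrt{\mu_i}+\sqrt{\nu_j})\le\kappa^{-1/2}|(A-B)_{ij}|\), since \(\mu_i\ge\kappa\) and \(\nu_j\ge0\).
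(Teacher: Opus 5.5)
Your proposal is correct and matches the paper's proof step for step. You use the $L^2$ synthesis bound, the first claim of Lemma~\ref{lem:dgp-weighted-gaussian-radius} for the deterministic matrix $M=\mathbf C_0^{1/2}-(\widehat{\mathbf C}_0^L)^{1/2}$, the one-gap square-root estimate with $\kappa_{0,J}$, and the projected tail bound with $\tr(D_r\mathbf C_0D_r)\le\mathcal T_0$. Your optional justification of the singular square-root bound, which the paper only cites, is sound if ``joint eigenbasis'' is read as the eigenbasis of $A$ on the left and of $B$ on the right, since $A$ and $B$ need not commute.
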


\begin{proof}
By Lemma~\ref{lem:psd-projection-nonexpansive} and
\eqref{eq:dgp-uniform-tail-bound}, we have \(\|\mathbf C_0-\widehat{\mathbf C}_0^L\|_F\le\mathcal T_02^{-rL}\).
Moreover, \(\mathbf C_0\succeq\kappa_{0,J}I\) by Lemma~\ref{lem:dgp-finite-resolution-coercivity}, and \(\widehat{\mathbf C}_0^L\succeq0\) by construction.  Hence,~\eqref{eq:frobenius-square-root-one-gap} gives
\[
    \|\mathbf C_0^{1/2}-(\widehat{\mathbf C}_0^L)^{1/2}\|_F
    \le
    \frac{\mathcal T_0}{\sqrt{\kappa_{0,J}}}2^{-rL}.
\]
By definition,
\[
    a_0-\widehat a_0^L
    =
    \left(
        \mathbf C_0^{1/2}-(\widehat{\mathbf C}_0^L)^{1/2}
    \right)z_0.
\]
Thus the reconstruction formula \eqref{eq:dgp-finite-reconstruction} and the
\(L^2\)-synthesis estimate \eqref{eq:wavelet-synthesis-L2} give
\begin{align*}
    e_0^2
    &=
    \left\|
        \sum_{\iota\in\Lambda_J}
        (a_0-\widehat a_0^L)_\iota\widetilde\psi_\iota
    \right\|_{L^2(\Omega)}^2\le
    {S_{\rm Riesz}}
    \left\|
        \left(
            \mathbf C_0^{1/2}-(\widehat{\mathbf C}_0^L)^{1/2}
        \right)z_0
    \right\|_2^2.
\end{align*}
The first claim of
Lemma~\ref{lem:dgp-weighted-gaussian-radius}, applied with
\(M=\mathbf C_0^{1/2}-(\widehat{\mathbf C}_0^L)^{1/2}\), gives, with
probability at least \(1-\varepsilon\),
\[
    e_0
    \le
    {S_{\rm Riesz}}^{1/2}q_\varepsilon
    \|\mathbf C_0^{1/2}-(\widehat{\mathbf C}_0^L)^{1/2}\|_F
    \le
    \alpha_02^{-rL}.
\]
\end{proof}
For \(\ell\ge1\), the covariance at layer \(\ell\) is fixed given \(\mathscr F_{\ell-1}\), while \(z_\ell\) is independent of \(\mathscr F_{\ell-1}\), so the one-step estimate is stated conditionally.
\begin{proposition}\label{prop:dgp-one-step-minimal}
Let \(\ell\in\{1,\dots,D\}\). Then, for every \(L\),
\begin{equation}\label{eq:dgp-one-step-minimal}
    \Prob\!\left(
        e_\ell
        \le
        \alpha_\ell2^{-rL}+\gamma_\ell e_{\ell-1}
        \,\middle|\,
        \mathscr F_{\ell-1}
    \right)
    \ge
    1-\varepsilon
    \qquad\text{on }\mathcal E_{\ell-1}.
\end{equation}
\end{proposition}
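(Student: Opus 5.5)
We work conditionally on \(\mathscr F_{\ell-1}\) and fix an outcome in \(\mathcal E_{\ell-1}\). Then \(f_{\ell-1}\) and \(\widehat f_{\ell-1}^L\) are fixed, and so are the matrices \(A:=\CellJ{\ell}(f_{\ell-1})\) and \(\widehat A:=\widehat{\mathbf C}_\ell^L(\widehat f_{\ell-1}^L)\). The vector \(z_\ell\sim\mathcal N(0,I_{p(J)})\) is independent of \(\mathscr F_{\ell-1}\).

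By the reconstruction formula \eqref{eq:dgp-finite-reconstruction} and the synthesis bound \eqref{eq:wavelet-synthesis-L2},
\[
e_\ell\le S_{\rm Riesz}^{1/2}\bigl\|(A^{1/2}-\widehat A^{1/2})z_\ell\bigr\|_2 .
\]
We then apply the first claim of Lemma~\ref{lem:dgp-weighted-gaussian-radius} with the \(\mathscr F_{\ell-1}\)-measurable matrix \(M=A^{1/2}-\widehat A^{1/2}\). This gives, conditionally with probability at least \(1-\varepsilon\),
\[
e_\ell\le S_{\rm Riesz}^{1/2}q_\varepsilon\|A^{1/2}-\widehat A^{1/2}\|_F .
\]
Because \(M\) is frozen by the conditioning, the conditional probability statement is legitimate.

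Next we bound the square-root difference. On \(\mathcal E_{\ell-1}\) we have \(\|f_{\ell-1}\|_{H^r}\le B_{\ell-1,\varepsilon}\), so Lemma~\ref{lem:dgp-finite-resolution-coercivity} gives \(A\succeq\kappa_{\ell,J}(B_{\ell-1,\varepsilon})I\). Since \(\widehat A\succeq0\) by construction, \eqref{eq:frobenius-square-root-one-gap} yields
\[
\|A^{1/2}-\widehat A^{1/2}\|_F\le\kappa_{\ell,J}(B_{\ell-1,\varepsilon})^{-1/2}\|A-\widehat A\|_F .
\]
It remains to bound \(\|A-\widehat A\|_F\), and we do this via the splitting \eqref{eq:outline-splitting}, first inserting \(\widehat{\mathbf C}_\ell^L(f_{\ell-1})\):
\[
\|A-\widehat A\|_F\le\|A-\widehat{\mathbf C}_\ell^L(f_{\ell-1})\|_F+\|\widehat{\mathbf C}_\ell^L(f_{\ell-1})-\widehat{\mathbf C}_\ell^L(\widehat f_{\ell-1}^L)\|_F .
\]
For the first term, Lemma~\ref{lem:psd-projection-nonexpansive} applies because \(A\succeq0\), so it is at most \(\|A-\CellJL{\ell}(f_{\ell-1})\|_F\). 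By \eqref{eq:dgp-uniform-tail-bound}, this is at most \(\mathcal T_\ell(B_{\ell-1,\varepsilon})2^{-rL}\), using again \(f_{\ell-1}\in\mathcal B_r(B_{\ell-1,\varepsilon})\). For the second term, \eqref{eq:dgp-global-frob-lipschitz} bounds it by \(\Lip_\ell e_{\ell-1}\). This requires only that both inputs are continuous, which holds because finite dual-wavelet reconstructions lie in \(H^r\hookrightarrow C(\overline\Omega)\). No regularity of the approximate chain is needed.

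Combining these estimates gives \(e_\ell\le\alpha_\ell2^{-rL}+\gamma_\ell e_{\ell-1}\) with exactly the stated constants, on an event of conditional probability at least \(1-\varepsilon\), for every outcome in \(\mathcal E_{\ell-1}\).

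The main point requiring care is the measure-theoretic bookkeeping. All deterministic quantities (\(A\), \(\widehat A\), \(e_{\ell-1}\), \(M\)) must be \(\mathscr F_{\ell-1}\)-measurable, which follows from the measurability remarks preceding Proposition~\ref{prop:dgp-good-event}. The conditional probability then reduces to a Gaussian probability with the frozen matrix, evaluated via a regular conditional distribution or the freezing lemma. The coercivity is used only for the exact matrix \(A\), which is why the one-gap bound \eqref{eq:frobenius-square-root-one-gap}, rather than \eqref{eq:frobenius-square-root}, is the right tool even though \(\widehat A\) may be singular.
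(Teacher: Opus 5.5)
Your proof is correct and is essentially the paper's argument. You freeze $M=\mathbf C_\ell(f_{\ell-1})^{1/2}-\widehat{\mathbf C}_\ell^L(\widehat f_{\ell-1}^L)^{1/2}$ under the conditioning, apply the $L^2$-synthesis bound and Lemma~\ref{lem:dgp-weighted-gaussian-radius}, use coercivity of the exact matrix with \eqref{eq:frobenius-square-root-one-gap}, and split into compression and Lipschitz errors, exactly as the paper does. The only cosmetic difference is that you insert the projected intermediate $\widehat{\mathbf C}_\ell^L(f_{\ell-1})$ and apply nonexpansiveness of $\Pi_+$ to each piece, whereas the paper applies it once via $\Pi_+(\mathbf C_\ell(f))=\mathbf C_\ell(f)$ and then splits the unprojected truncations; both give the identical bound and constants.
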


\begin{proof}
Set \(f:=f_{\ell-1}\) and \(g:=\widehat f_{\ell-1}^L\).
On \(\mathcal E_{\ell-1}\),
\(\|f\|_{H^r}\le B_{\ell-1,\varepsilon}\).  Since
\(\mathbf C_\ell(f)\succeq0\), we have
\(\Pi_+(\mathbf C_\ell(f))=\mathbf C_\ell(f)\).  Therefore
Lemma~\ref{lem:psd-projection-nonexpansive} gives
\begin{align*}
\|\mathbf C_\ell(f)-\widehat{\mathbf C}_\ell^L(g)\|_F
&\le
\|\mathbf C_\ell(f)-\mathbf C_\ell^L(g)\|_F\le
\|\mathbf C_\ell(f)-\mathbf C_\ell^L(f)\|_F\\&
+
\|\mathbf C_\ell^L(f)-\mathbf C_\ell^L(g)\|_F\le
\mathcal T_\ell(B_{\ell-1,\varepsilon})2^{-rL}
+
\Lip_\ell e_{\ell-1},
\end{align*}
where the last step uses \eqref{eq:dgp-uniform-tail-bound} and \eqref{eq:dgp-global-frob-lipschitz}.
By Lemma~\ref{lem:dgp-finite-resolution-coercivity},
\[
    \mathbf C_\ell(f)
    \succeq
    \kappa_{\ell,J}(B_{\ell-1,\varepsilon})I,
\]
whereas \(\widehat{\mathbf C}_\ell^L(g)\succeq0\).  Hence,~\eqref{eq:frobenius-square-root-one-gap} gives
\begin{equation}\label{eq:dgp-projected-square-root-one-step}
\|\mathbf C_\ell(f)^{1/2}
-\widehat{\mathbf C}_\ell^L(g)^{1/2}\|_F\le
\frac{
\mathcal T_\ell(B_{\ell-1,\varepsilon})2^{-rL}
+
\Lip_\ell e_{\ell-1}}{\sqrt{\kappa_{\ell,J}(B_{\ell-1,\varepsilon})}}.
\end{equation}
Since \(f\) and \(g\) are \(\mathscr F_{\ell-1}\)-measurable, the matrix \(\mathbf C_\ell(f)^{1/2}-\widehat{\mathbf C}_\ell^L(g)^{1/2}\) is fixed conditionally on \(\mathscr F_{\ell-1}\), while \(z_\ell\) remains an independent standard Gaussian vector. 
Moreover,
\[
    a_\ell-\widehat a_\ell^L
    =
    \left(
        \mathbf C_\ell(f)^{1/2}
        -\widehat{\mathbf C}_\ell^L(g)^{1/2}
    \right)z_\ell.
\]
Hence \eqref{eq:dgp-finite-reconstruction} and
\eqref{eq:wavelet-synthesis-L2} give
\begin{align*}
    e_\ell^2
    &=
    \left\|
        \sum_{\iota\in\Lambda_J}
        (a_\ell-\widehat a_\ell^L)_\iota\widetilde\psi_\iota
    \right\|_{L^2(\Omega)}^2
    \le
    {S_{\rm Riesz}}
    \left\|
        \left(
            \mathbf C_\ell(f)^{1/2}
            -\widehat{\mathbf C}_\ell^L(g)^{1/2}
        \right)z_\ell
    \right\|_2^2.
\end{align*}
The first claim of Lemma~\ref{lem:dgp-weighted-gaussian-radius}, applied
conditionally with
\(M=\mathbf C_\ell(f)^{1/2}-\widehat{\mathbf C}_\ell^L(g)^{1/2}\), therefore
gives, with conditional probability at least \(1-\varepsilon\),
\[
    e_\ell
    \le
    {S_{\rm Riesz}}^{1/2}q_\varepsilon
    \|\mathbf C_\ell(f)^{1/2}
      -\widehat{\mathbf C}_\ell^L(g)^{1/2}\|_F.
\]
Substituting \eqref{eq:dgp-projected-square-root-one-step} and the definitions
of \(\alpha_\ell\) and \(\gamma_\ell\) proves
\eqref{eq:dgp-one-step-minimal}.
\end{proof}

\subsection{Proof of the main theorem}\label{sec:main_thm_proof}
The proof applies Gaussian concentration \(2D+1\) times, so the radii \(B_{\ell,\varepsilon}\) of \eqref{eq:dgp-radius-recursion} and the coefficients \(\alpha_\ell,\gamma_\ell\) of Section~\ref{sec:dgp-one-step-final-error} are evaluated at \(\varepsilon:=\eta/(2D+1)\), where \(\eta\) is the desired failure probability in Theorem~\ref{thm:dgp-total-error-layer-D}.

Define
\begin{equation}\label{eq:dgp-propagation-prefactor}
P_0:=\alpha_0,
\qquad
P_\ell:=\alpha_\ell+\gamma_\ell P_{\ell-1},
\qquad
\ell=1,\dots,D,
\end{equation}
so that \(P_\ell\) sums the compression errors from layers \(0,\dots,\ell\), each amplified by the later coefficients \(\gamma\), and
\begin{equation}\label{eq:dgp-terminal-cov-prefactor}
K_D^{\mathrm{cov}}
:=
\mathcal T_D(B_{D-1,\varepsilon})
+\Lip_D P_{D-1}.
\end{equation}

\begin{proof}[Proof of Theorem~\ref{thm:dgp-total-error-layer-D}]
Fix \(L\in\N\). By Proposition~\ref{prop:dgp-good-event}, \(\Prob(\mathcal E_{D-1}^c)\le D\varepsilon\).
Let \(\mathcal G_0\) denote the event \(\{e_0\le\alpha_02^{-rL}\}\) in
\eqref{eq:dgp-initial-sample-error}. Then
Lemma~\ref{lem:dgp-initial-projected-error} gives \(\Prob(\mathcal G_0^c)\le\varepsilon\).
For \(\ell=1,\dots,D\), let \(\mathcal G_\ell\) denote the event \(\{e_\ell\le\alpha_\ell2^{-rL}+\gamma_\ell e_{\ell-1}\}\) in
\eqref{eq:dgp-one-step-minimal}. By
Proposition~\ref{prop:dgp-one-step-minimal},
\[
    \Prob(
        \mathcal G_\ell^c
        \mid\mathscr F_{\ell-1}
    )
    \le\varepsilon
    \qquad\text{on }\mathcal E_{\ell-1}.
\]
Since
\(\mathcal E_{D-1}\subseteq\mathcal E_{\ell-1}\) and
\(\mathcal E_{\ell-1}\in\mathscr F_{\ell-1}\), the law of total expectation gives
\[
\Prob(\mathcal E_{\ell-1}\cap \mathcal G_\ell^c)
=
\E\!\left[
    \mathbf 1_{\mathcal E_{\ell-1}}
    \Prob\!\left(
        \mathcal G_\ell^c
        \,\middle|\,
        \mathscr F_{\ell-1}
    \right)
\right]
\le
\varepsilon
\E\!\left[\mathbf 1_{\mathcal E_{\ell-1}}\right]
=
\varepsilon\Prob(\mathcal E_{\ell-1})
\le\varepsilon
\]
and because \(\mathcal E_{D-1}\subseteq\mathcal E_{\ell-1}\), it follows that \(\Prob(\mathcal E_{D-1}\cap \mathcal G_\ell^c)\le\varepsilon\).
A union bound over the one-step failures therefore gives
\[
    \Prob\!\left(
        \mathcal E_{D-1}
        \cap
        \bigcup_{\ell=1}^{D}\mathcal G_\ell^c
    \right)
    \le
    D\varepsilon.
\]
  Therefore, with
  \(\mathcal H:=\mathcal E_{D-1}\cap \mathcal G_0\cap\bigcap_{\ell=1}^{D}\mathcal G_\ell\),
  a union bound gives
  \begin{equation}\label{eq:dgp-G-failure}
      \Prob(\mathcal H^c)
      \le
      D\varepsilon+\varepsilon+D\varepsilon
      =(2D+1)\varepsilon
      =\eta.
  \end{equation}
On \(\mathcal H\), \(e_0\le\alpha_02^{-rL}=P_02^{-rL}\) holds.
If \(e_{\ell-1}\le P_{\ell-1}2^{-rL}\), then
\eqref{eq:dgp-one-step-minimal} and
\eqref{eq:dgp-propagation-prefactor} give
\[
    e_\ell
    \le
    \alpha_\ell2^{-rL}+\gamma_\ell e_{\ell-1}
    \le
    (\alpha_\ell+\gamma_\ell P_{\ell-1})2^{-rL}
    =P_\ell2^{-rL}.
\]
Thus, on \(\mathcal H\), \(e_\ell\le P_\ell2^{-rL}\) for every \(\ell=0,\dots,D\).

For the terminal covariance, note that \(\|f_{D-1}\|_{H^r}\le B_{D-1,\varepsilon}\) on \(\mathcal H\subseteq\mathcal E_{D-1}\). Arguing as in the proof of Proposition~\ref{prop:dgp-one-step-minimal}, nonexpansiveness of \(\Pi_+\) (Lemma~\ref{lem:psd-projection-nonexpansive}), the bounds \eqref{eq:dgp-uniform-tail-bound} and \eqref{eq:dgp-global-frob-lipschitz}, and the bound on \(e_{D-1}\) just proved give, on \(\mathcal H\),
\begin{align*}
\|\CellJ{D}(f_{D-1})
  -\widehat{\mathbf C}_D^L(\widehat f_{D-1}^L)\|_F
&\le
\|\CellJ{D}(f_{D-1})-\CellJL{D}(\widehat f_{D-1}^L)\|_F\\
&\quad\le
\mathcal T_D(B_{D-1,\varepsilon})2^{-rL}
+\Lip_D\|f_{D-1}-\widehat f_{D-1}^L\|_{L^2}\\
&\quad\le
\left(
    \mathcal T_D(B_{D-1,\varepsilon})+\Lip_DP_{D-1}
\right)2^{-rL}
=K_D^{\mathrm{cov}}2^{-rL}.
\end{align*}
This proves \eqref{eq:dgp-final-covariance-error}. Hence all assertions of the theorem hold on \(\mathcal H\), which has probability at least \(1-\eta\) by \eqref{eq:dgp-G-failure}.
\end{proof}

\section{Numerical experiments}\label{sec:numerics}
All numerical experiments are performed in one spatial dimension, on \(\Omega=[0,1]\).
We construct Symlet boundary-adapted wavelets from~\cite[Section~4]{cohen1993wavelets}. 
Starting from the low- and high-pass filters associated with a scaling function and a mother wavelet with \(N\) vanishing moments, respectively, they propose an orthonormalisation procedure that produces \(N\) left boundary functions, \(N\) right boundary functions, and \(2^j-2N\) interior functions at resolution level \(j\in\mathbb{N}\). As shown in~\cite[Theorem~4.4]{cohen1993wavelets}, the scaling functions at a fixed coarsest level \(j_0\), together with the boundary-adapted wavelets over all levels \(j\geq j_0\), form an orthonormal basis of \(L^2([0,1])\). The coarsest level \(j_0\) is chosen such that \(2^{j_0}\geq 2N\). At each level \(j=j_0,\ldots,J\), the construction yields \(2^j\) wavelet functions, so that the total number of wavelets is \(2^{J+1}-2^{j_0}\). Combined with the \(2^{j_0}\) scaling functions at the coarsest level, the total number of basis functions at level \(J\) is \(2^{J+1}\). The orthonormalisation coefficients are determined solely by the underlying filter sequence and are therefore independent of the resolution level \(j\). Since the boundary functions are finite linear combinations with scale-independent coefficients (the right boundary functions are additionally obtained by shifts, which do not affect Sobolev regularity), estimate \eqref{eq:wavelet-synthesis-Hr} holds with constants uniform in \(j\) on the unit interval. We refer to \cite[Section~4]{cohen1993wavelets} for further details on the construction.

For the numerical realisation of this construction, we use the filter and orthonormalisation coefficients provided in \texttt{IntervalWavelets.jl} \cite{jensen_intervalwavelets}. The scaling functions are evaluated on a dyadic grid using the direct approach in \cite[Section~7.2]{daubechies1992ten}. Their values at integer grid points are obtained from the eigenvector associated with eigenvalue \(1\) of the corresponding refinement matrix, and the refinement equation is subsequently applied recursively to determine their values on a uniformly spaced dyadic grid. This grid is then used directly in a composite Simpson quadrature rule to approximate the entries of the wavelet covariance matrices.

For sampling, recall that the dense matrix \(\widehat{\mathbf C}_0^L\) need not be assembled explicitly. Indeed, using a Lanczos/Krylov subspace procedure the square-root action \((\widehat{\mathbf C}_0^L)^{\frac12} z\) can be approximated. More precisely, we approximate the action of the matrix function \(h({\mathbf C}^L)\) on \(z\), where \(h(x):=\sqrt{\max(x,0)}\).
After \(m\) Lanczos iterations with starting vector \(q_1=z/\|z\|_2\), we obtain an orthonormal basis \(Q_m\) of the Krylov subspace together with the symmetric tridiagonal matrix
\[
T_m
=
Q_m^\top \mathbf C^L Q_m.
\]
The action of the matrix function is then approximated by
\[
h(\mathbf C^L)z
\approx
\|z\|_2\,Q_m h(T_m)e_1.
\]
Since \(T_m\in\mathbb R^{m\times m}\) is symmetric tridiagonal, \(h(T_m)\) can be evaluated directly via its spectral decomposition, see for example \cite{druskin1998lanczos} for further details.
\begin{figure}[htbp!]
    \centering

    \begin{subfigure}{0.46\textwidth}
        \centering
        \includegraphics[
            width=\linewidth,
            height=0.3\textheight,
            keepaspectratio
        ]{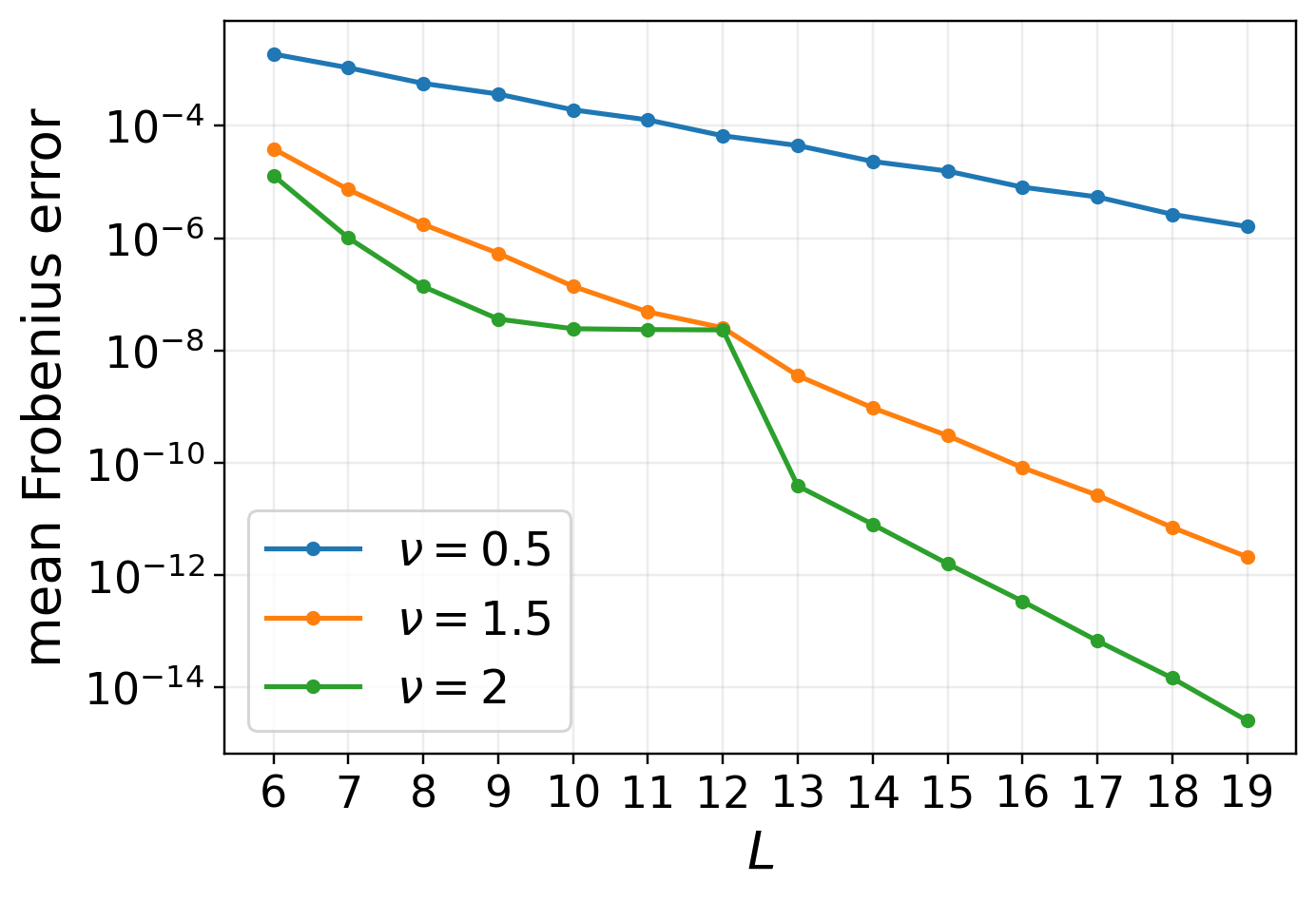}
        \caption{Sym4}
        \label{fig:initial-square}
    \end{subfigure}
    \hfill
    \begin{subfigure}{0.46\textwidth}
        \centering
        \includegraphics[
            width=\linewidth,
            height=0.3\textheight,
            keepaspectratio
        ]{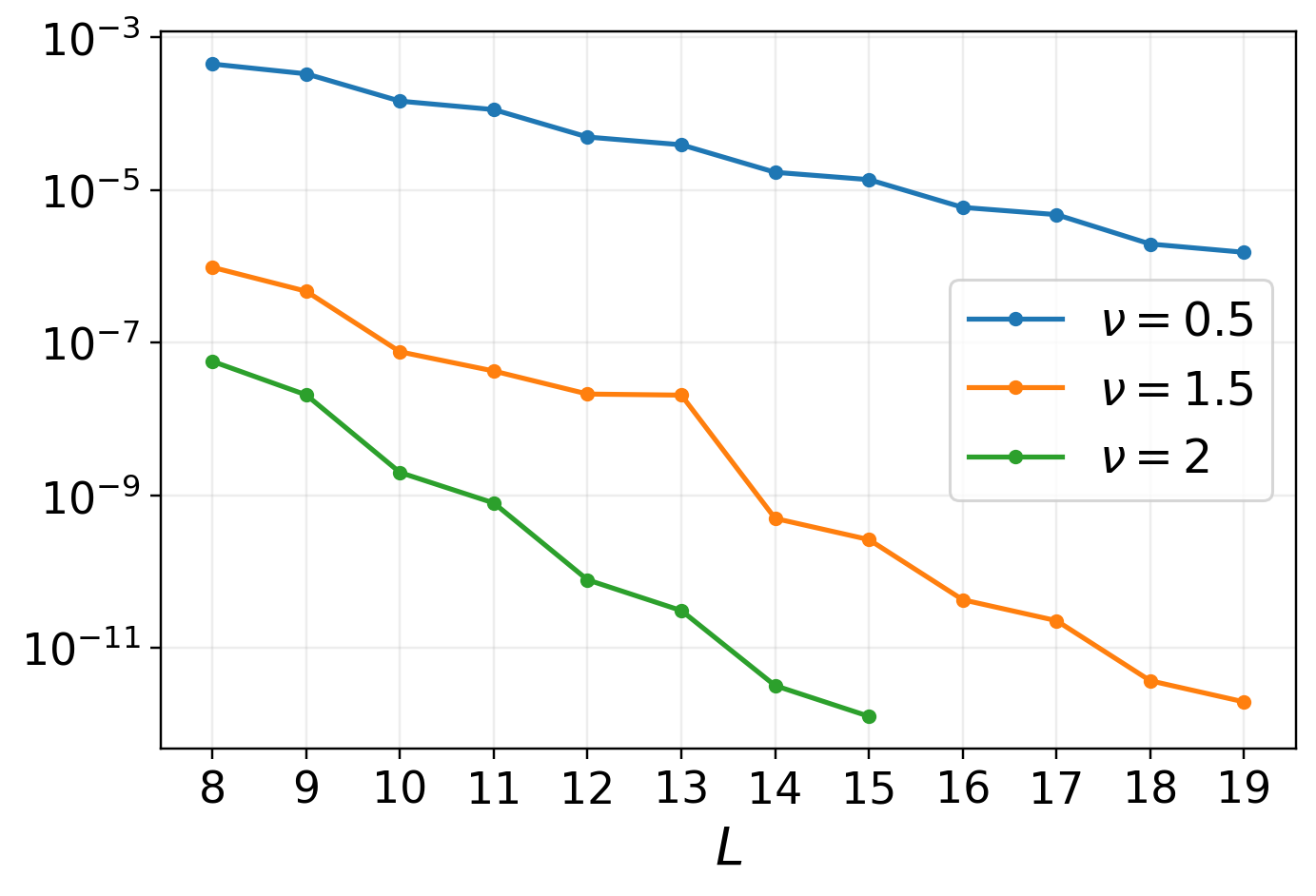}
        \caption{Sym8}
        \label{fig:initial-exp}
    \end{subfigure}

    \caption{Initial-layer covariance error \(\|\bC_0-\bC_0^L\|_F\) versus \(L\).}
    \label{fig:initial-layer}
\end{figure}

\begin{figure}[htbp]
    \centering
    \includegraphics[width=0.9\textwidth, height=0.3\textheight]{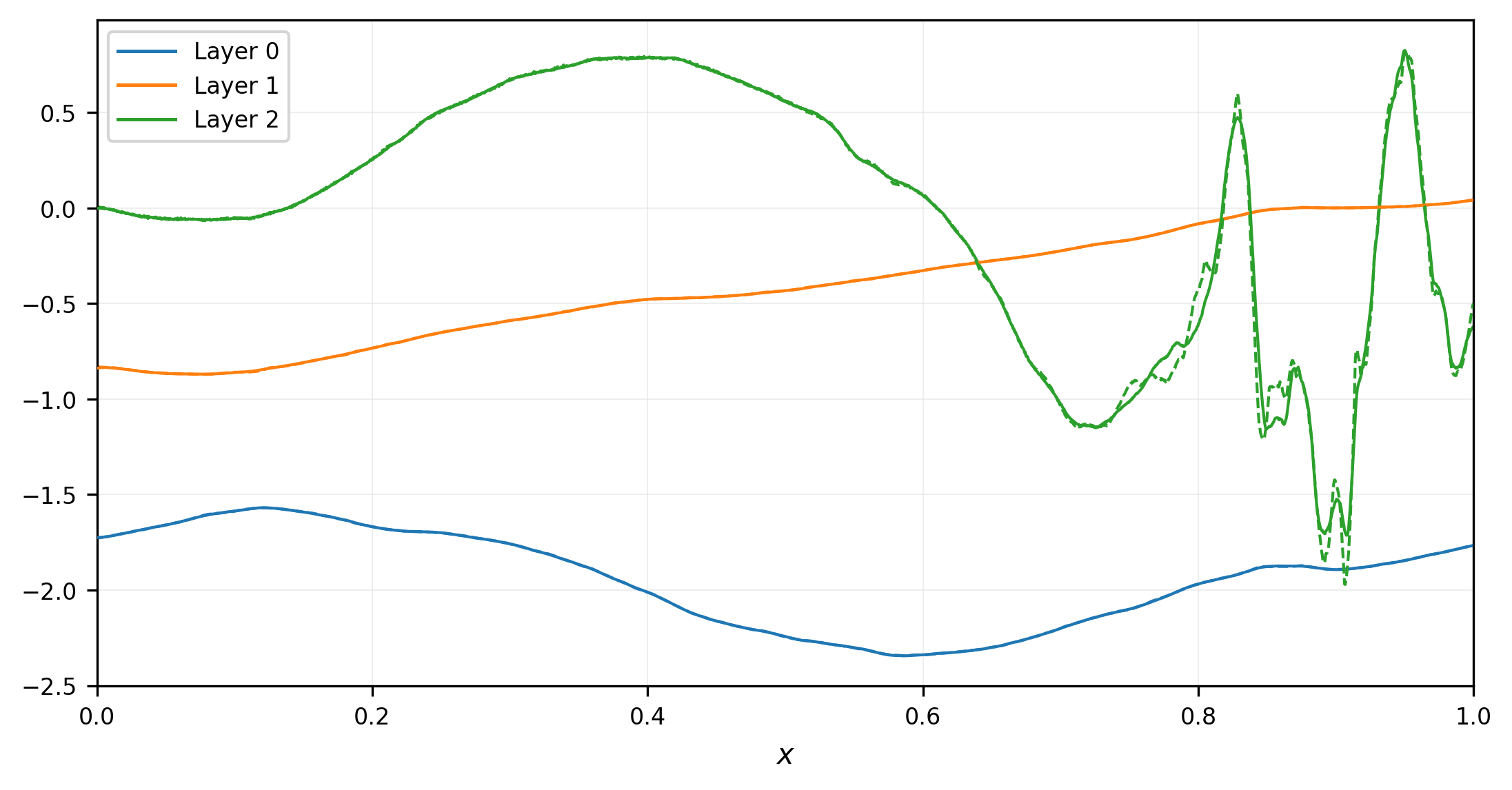}
    \caption{Samples paths in Sym4 wavelet basis for \(\nu=1.5, J=10\). The dashed lines are the truncated samples with \(L=9\) (no visible difference for layers 0 and 1).}
    \label{fig:samples}
\end{figure}
We use  Sym4 and Sym8 wavelets (Symlets with $N=4,8$ vanishing moments) in the experiments below. The resolution is fixed at $J=10$ (except for Sym8 with \(\nu=2\), where we set \(J=8\)), resulting in \(2048\) (and \(512\)) coefficients respectively. The coarsest levels are \(j_0=3\) and \(j_0=4\) for Sym4 and Sym8 respectively. 
 Every DGP in the experiments starts with an initial layer using variance \(\sigma^2=1\) and correlation length \(\lambda_0=1\).
For quadrature, we use a grid with stepsize \(2^{-16}\) and we sample with \(60\) Lanczos iterations. The errors in Figure~\ref{fig:combined-wavelets} are averages
of Frobenius covariance errors over \(15\) independent realisations. At each layer we calculate the Frobenius covariance error before projecting the truncated covariance. Lemma~\ref{lem:psd-projection-nonexpansive} ensures this is an upper bound on \(\|\bC_{\ell}-\widehat{\bC}^L_{\ell}\|_F\).

Figure~\ref{fig:samples} shows a sample of the exact DGP in wavelet basis and its truncated approximation, while Figure~\ref{fig:initial-layer} shows the initial-layer covariance errors using boundary-adapted Sym4 and Sym8 wavelets. For both families, the errors decrease rapidly with \(L\), with faster decay for smoother Mat\'ern kernels. This behaviour is qualitatively consistent with the regularity-dependent truncation bounds in Theorem~\ref{thm:trace-controlled-scale-tail} and Lemma~\ref{lem:conv-matern-sobolev-trace}. The Sym8 experiment with \(\nu=2\) uses the coarser resolution \(J=8\), so its decay cannot be attributed to kernel smoothness alone.

Figure~\ref{fig:combined-wavelets} shows the corresponding behaviour through three subsequent layers, using \(F(t)=t^2+10^{-3}\) and \(F(t)=e^t\). 
The errors generally increase with depth. This is likely due to the fact that the quadratic link produces short correlation lengths near zeros of the preceding field, and is more sensitive to relative perturbations. Indeed, \(L_F=\sup_t|F'(t)/F(t)|\) equals \(\sqrt{1000}\) for the quadratic link and \(1\) for the exponential link. This also provides an explanation for the fact that the separation between layers is more pronounced for the quadratic link.

The errors also decrease for \(\nu=0.5\), although this case lies outside the regularity assumptions of Theorem~\ref{thm:dgp-total-error-layer-D}.

\begin{figure}[p]
    \centering
    \captionsetup[subfigure]{font=small,skip=2pt}

    \begin{minipage}{0.32\textwidth}
        \centering
        \textbf{\(\nu=0.5\)}
    \end{minipage}
    \hfill
    \begin{minipage}{0.32\textwidth}
        \centering
        \textbf{\(\nu=1.5\)}
    \end{minipage}
    \hfill
    \begin{minipage}{0.32\textwidth}
        \centering
        \textbf{\(\nu=2\)}
    \end{minipage}

    \par\vspace{1mm}

    \begin{subfigure}{\textwidth}
        \centering
        \includegraphics[
            width=0.325\linewidth,
            height=0.20\textheight,
            keepaspectratio
        ]{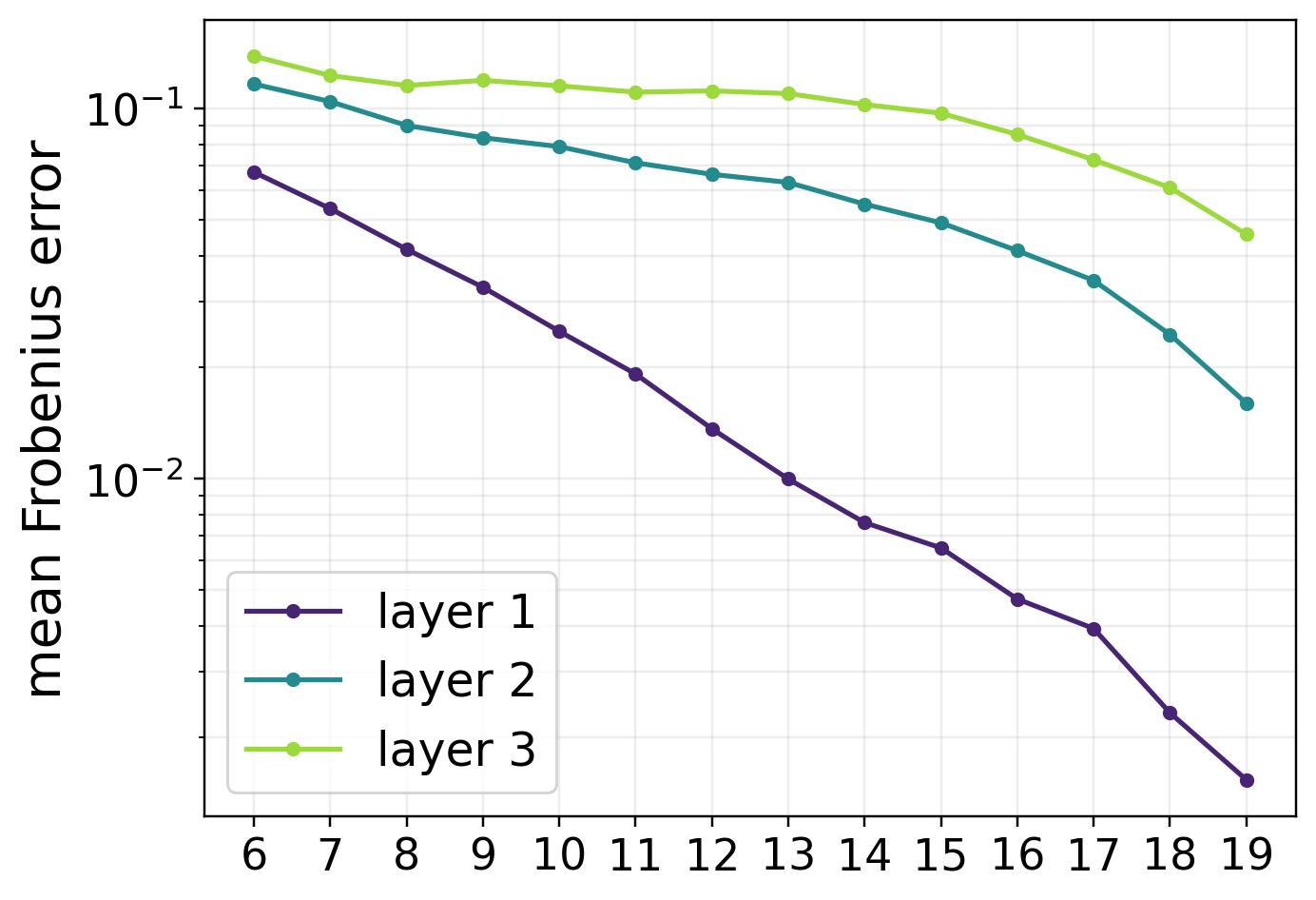}
        \hfill
        \includegraphics[
            width=0.325\linewidth,
            height=0.20\textheight,
            keepaspectratio
        ]{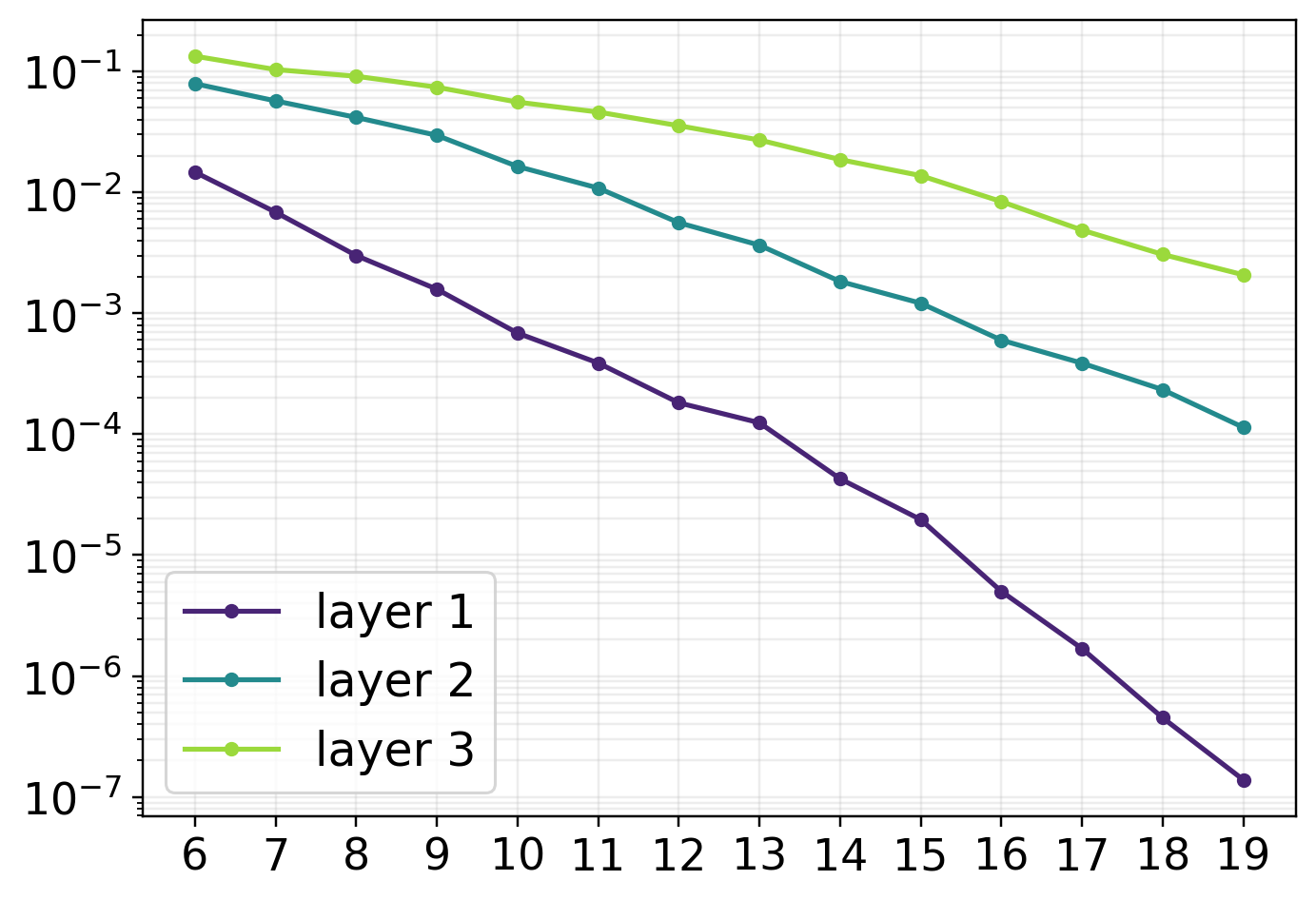}
        \hfill
        \includegraphics[
            width=0.325\linewidth,
            height=0.20\textheight,
            keepaspectratio
        ]{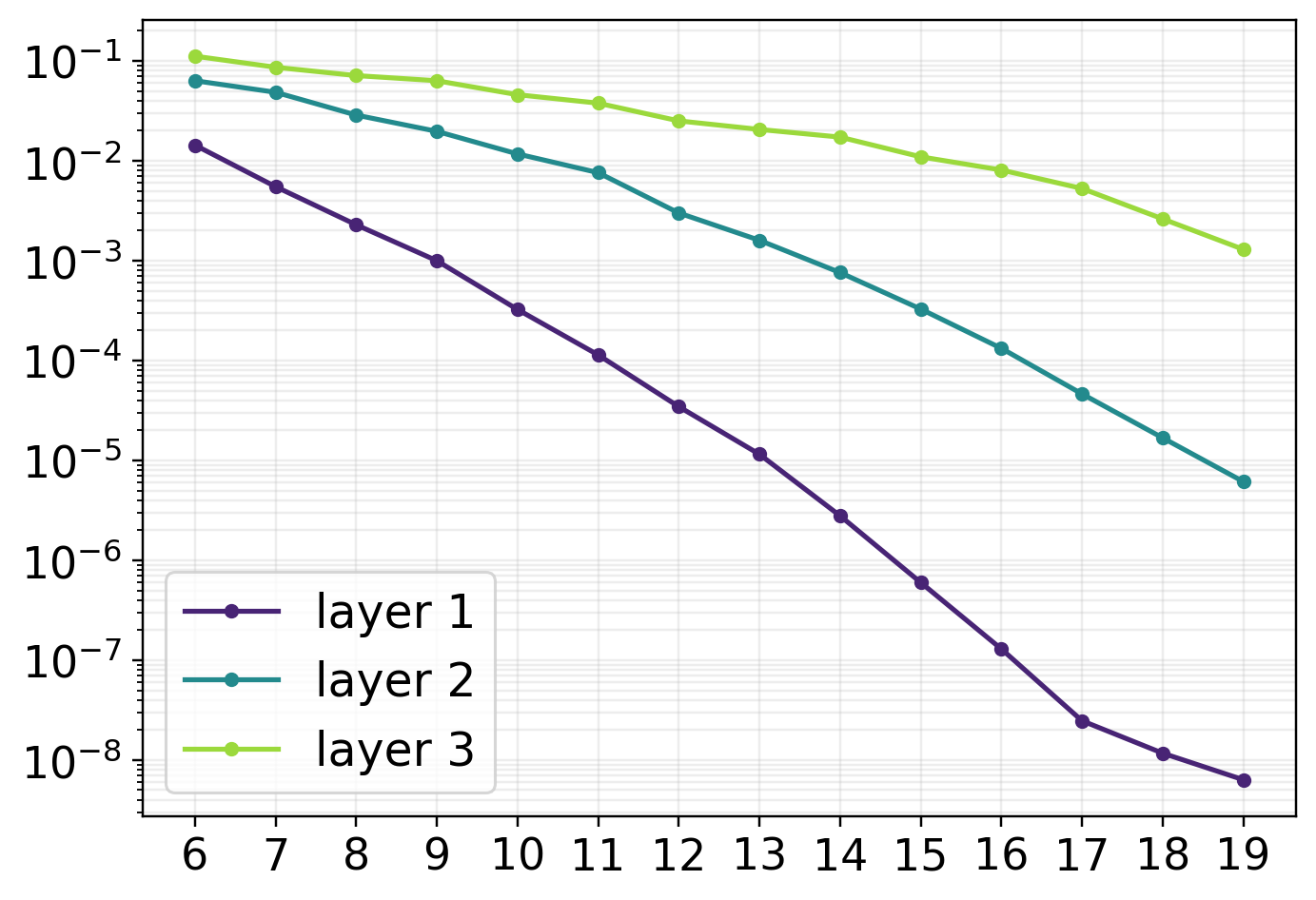}

        \caption{Sym4: \(F(x)=x^2+10^{-3}\)}
        \label{fig:sym4-square}
    \end{subfigure}

    \par\vspace{1mm}

    \begin{subfigure}{\textwidth}
        \centering
        \includegraphics[
            width=0.325\linewidth,
            height=0.20\textheight,
            keepaspectratio
        ]{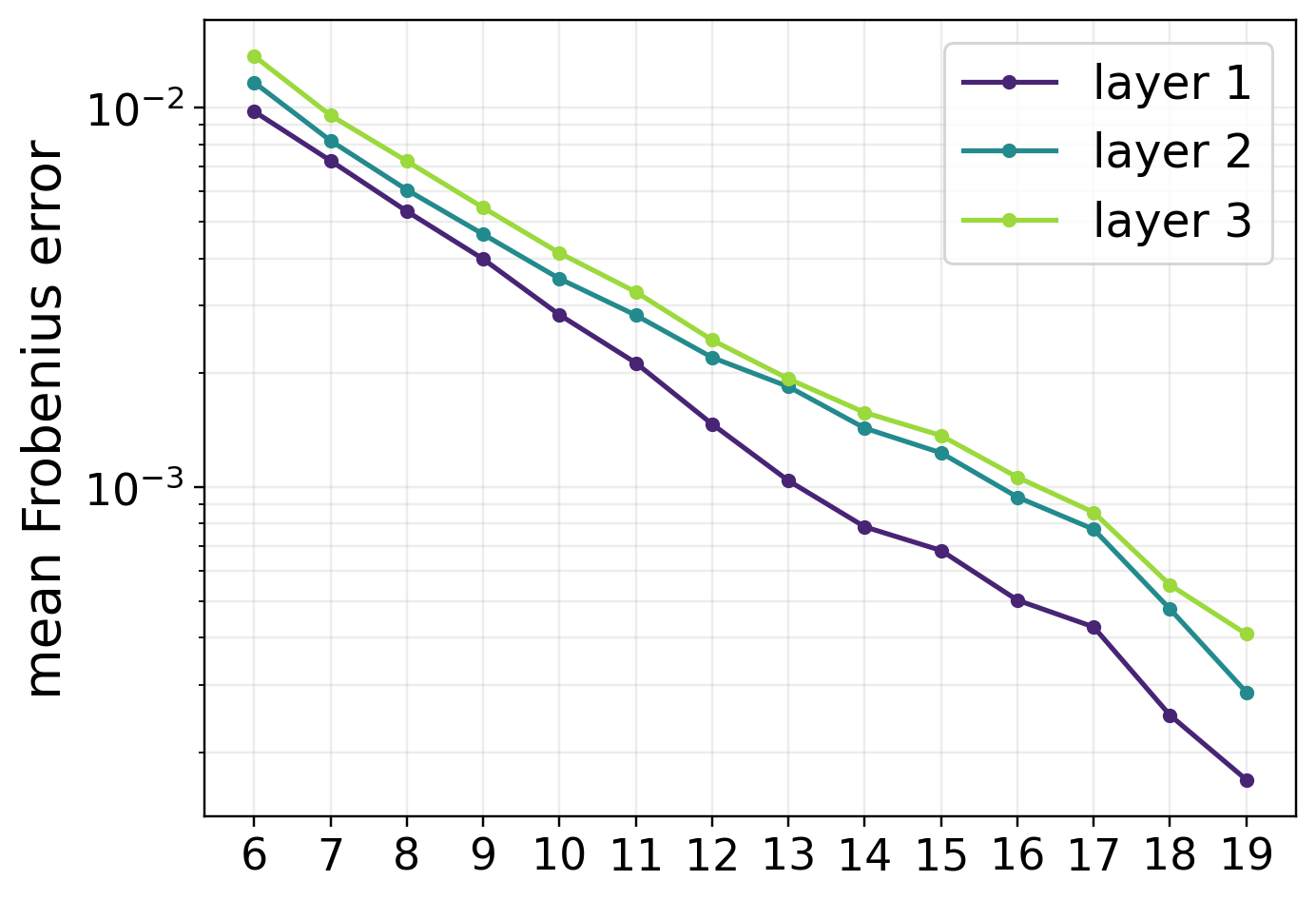}
        \hfill
        \includegraphics[
            width=0.325\linewidth,
            height=0.20\textheight,
            keepaspectratio
        ]{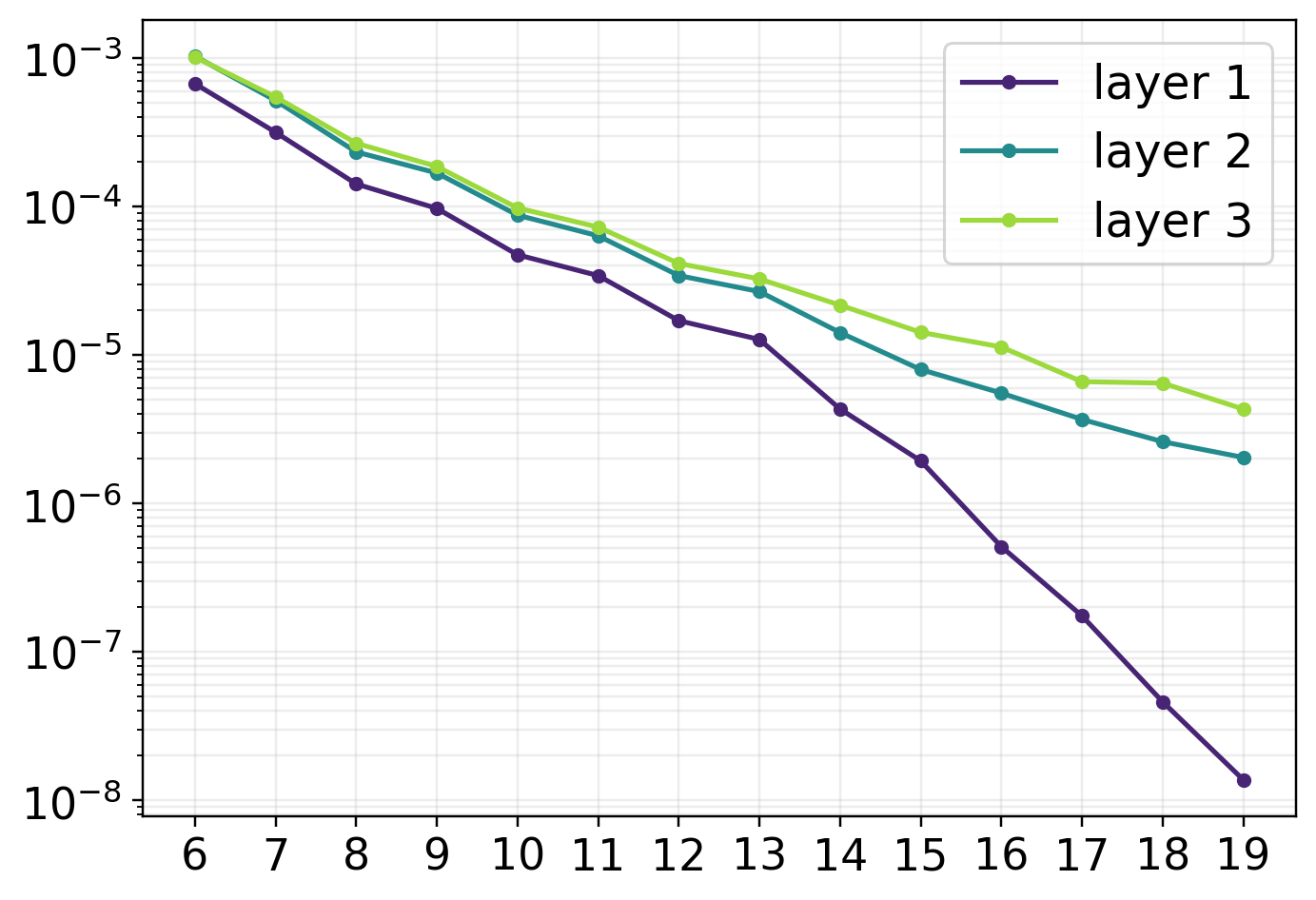}
        \hfill
        \includegraphics[
            width=0.325\linewidth,
            height=0.20\textheight,
            keepaspectratio
        ]{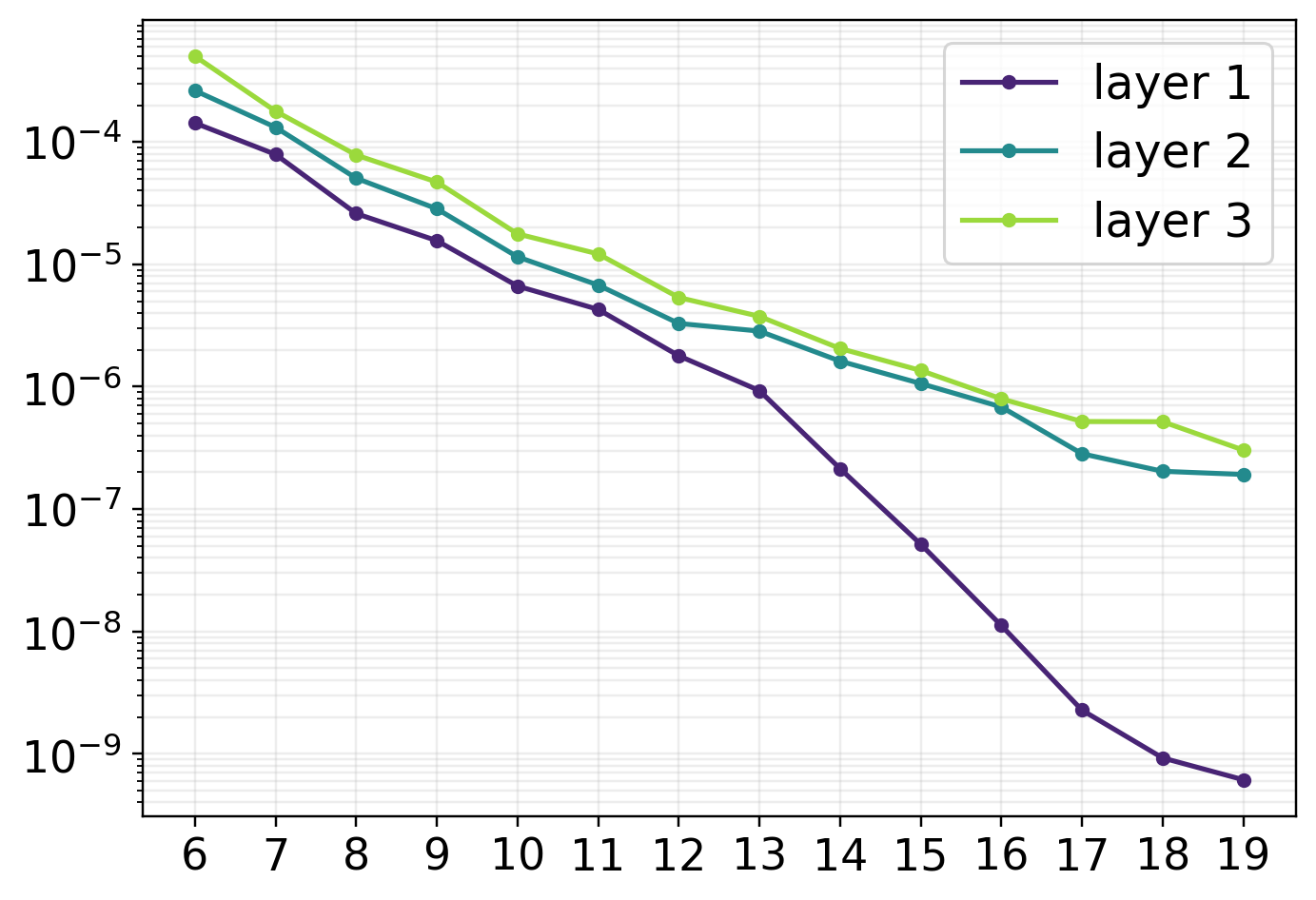}

        \caption{Sym4: \(F(x)=\exp(x)\)}
        \label{fig:sym4-exp}
    \end{subfigure}

    \par\vspace{1mm}

    \begin{subfigure}{\textwidth}
        \centering
        \includegraphics[
            width=0.325\linewidth,
            height=0.20\textheight,
            keepaspectratio
        ]{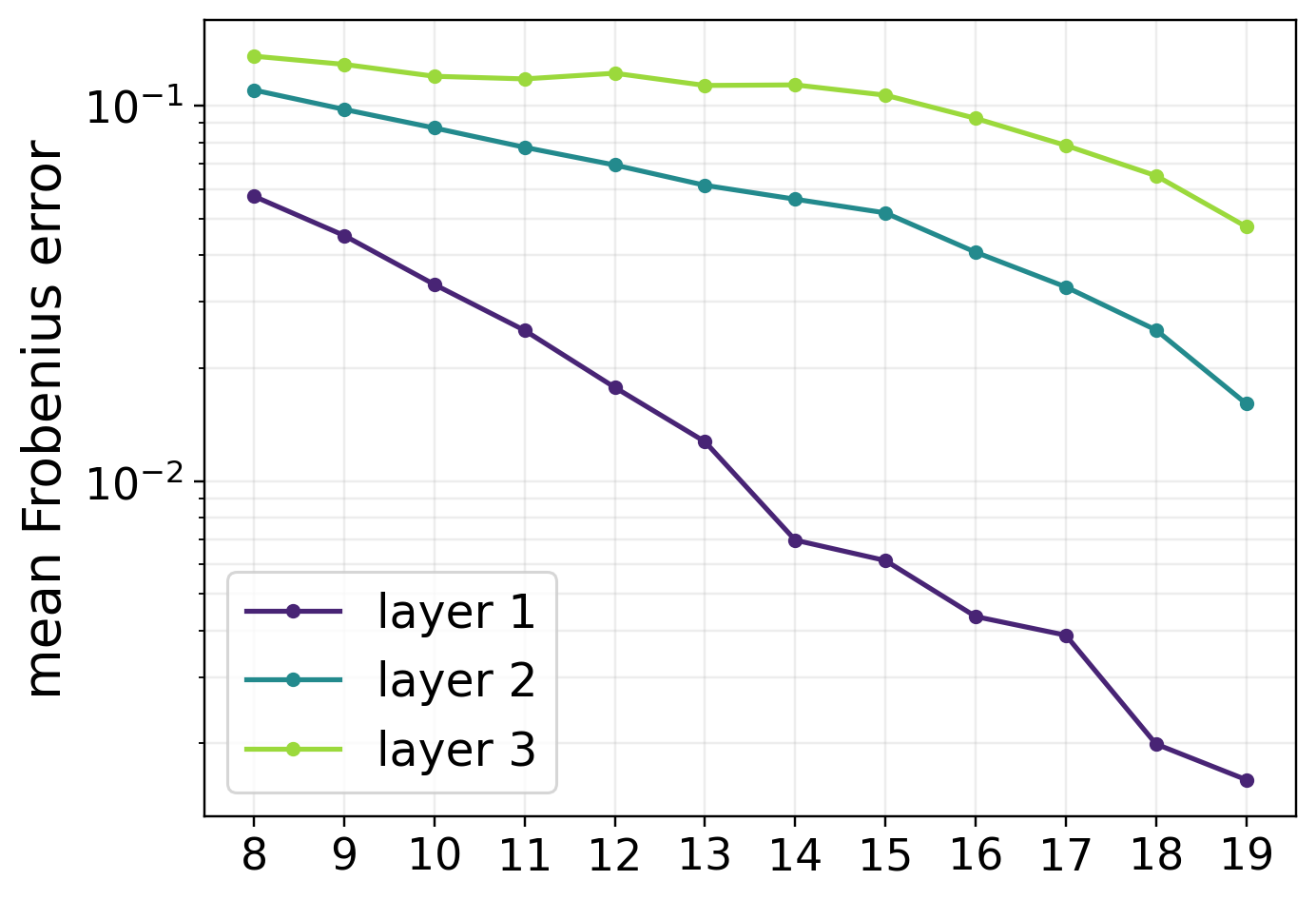}
        \hfill
        \includegraphics[
            width=0.325\linewidth,
            height=0.20\textheight,
            keepaspectratio
        ]{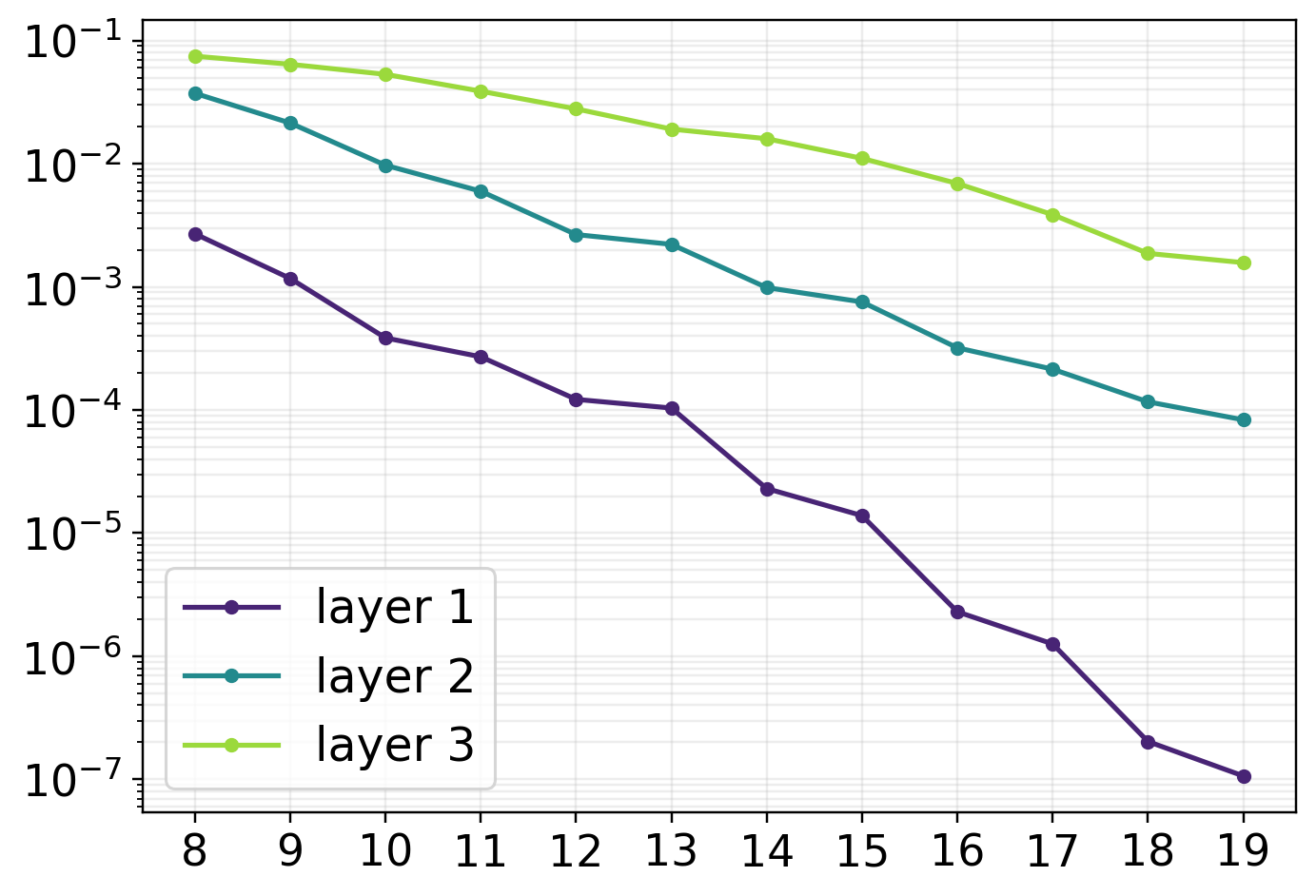}
        \hfill
        \includegraphics[
            width=0.325\linewidth,
            height=0.20\textheight,
            keepaspectratio
        ]{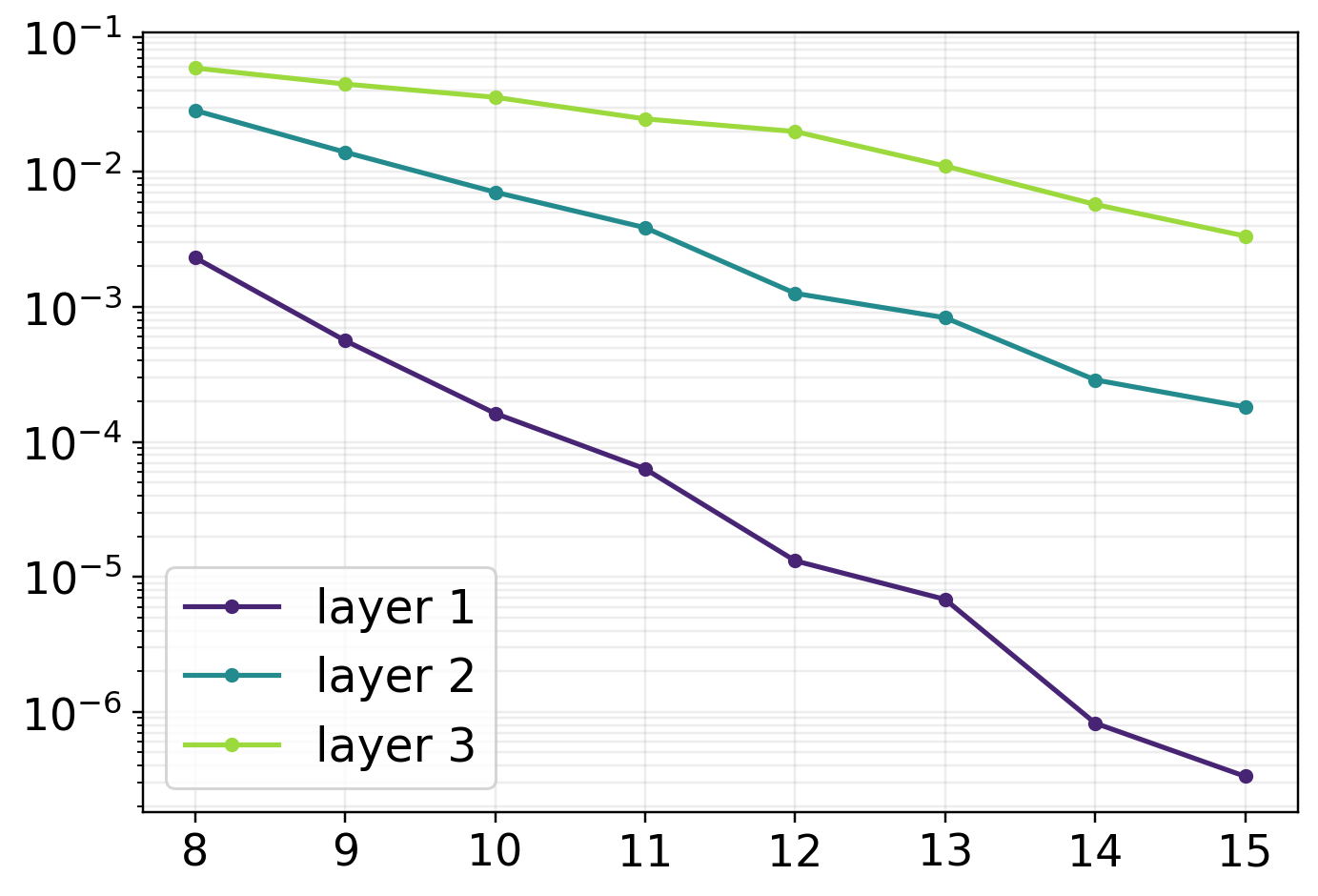}

        \caption{Sym8: \(F(x)=x^2+10^{-3}\)}
        \label{fig:sym8-square}
    \end{subfigure}

    \par\vspace{1mm}

    \begin{subfigure}{\textwidth}
        \centering
        \includegraphics[
            width=0.325\linewidth,
            height=0.20\textheight,
            keepaspectratio
        ]{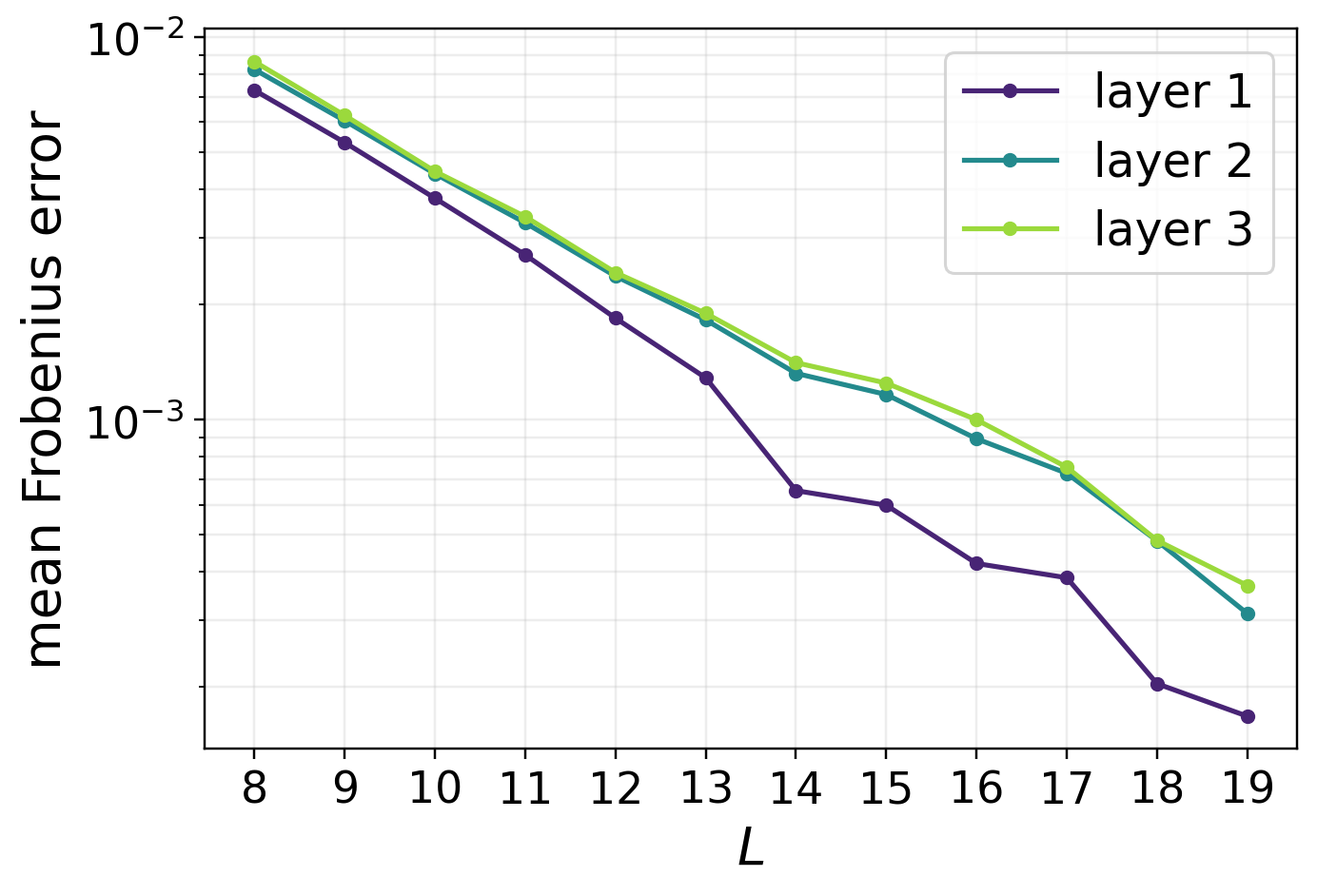}
        \hfill
        \includegraphics[
            width=0.325\linewidth,
            height=0.20\textheight,
            keepaspectratio
        ]{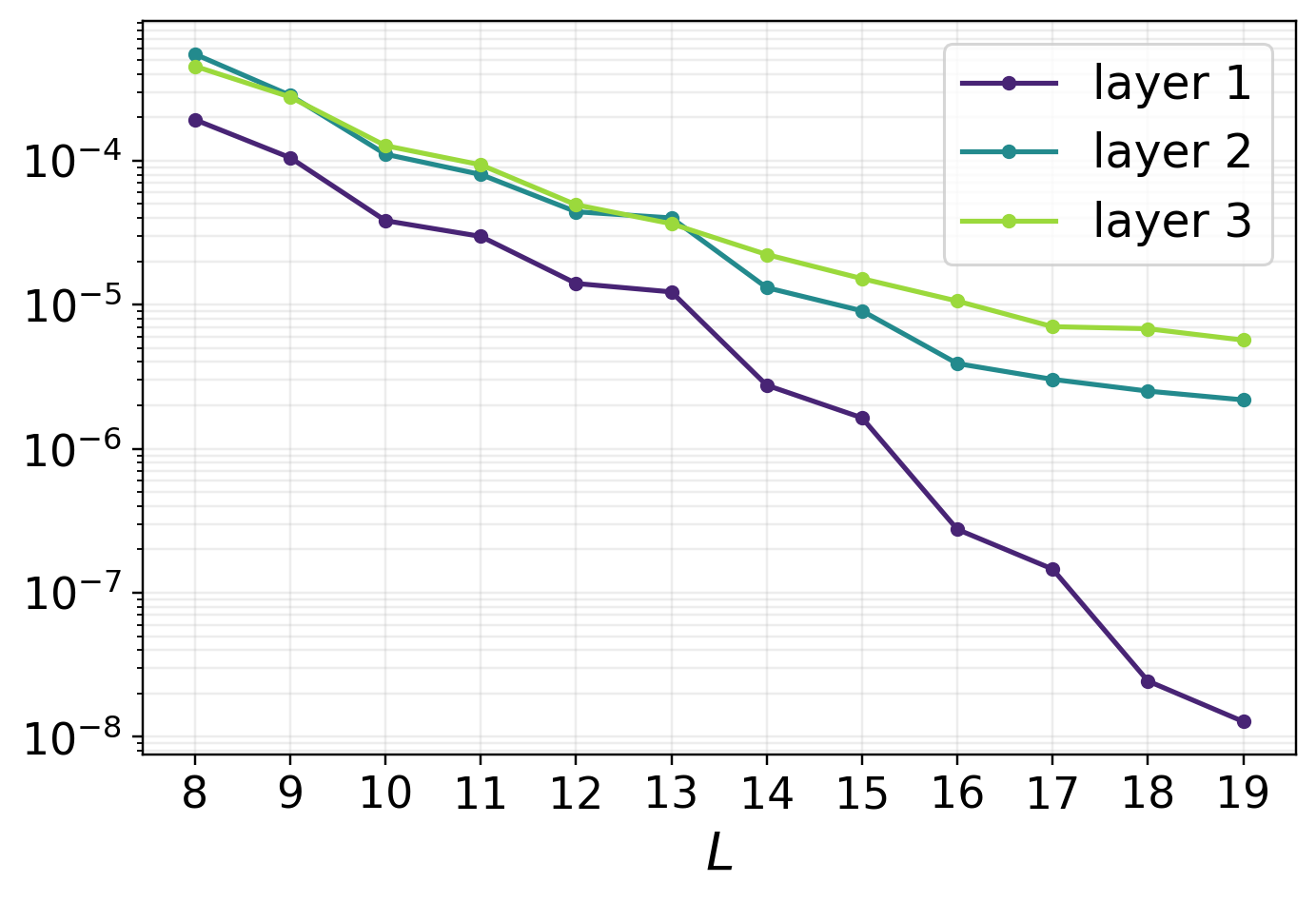}
        \hfill
        \includegraphics[
            width=0.325\linewidth,
            height=0.20\textheight,
            keepaspectratio
        ]{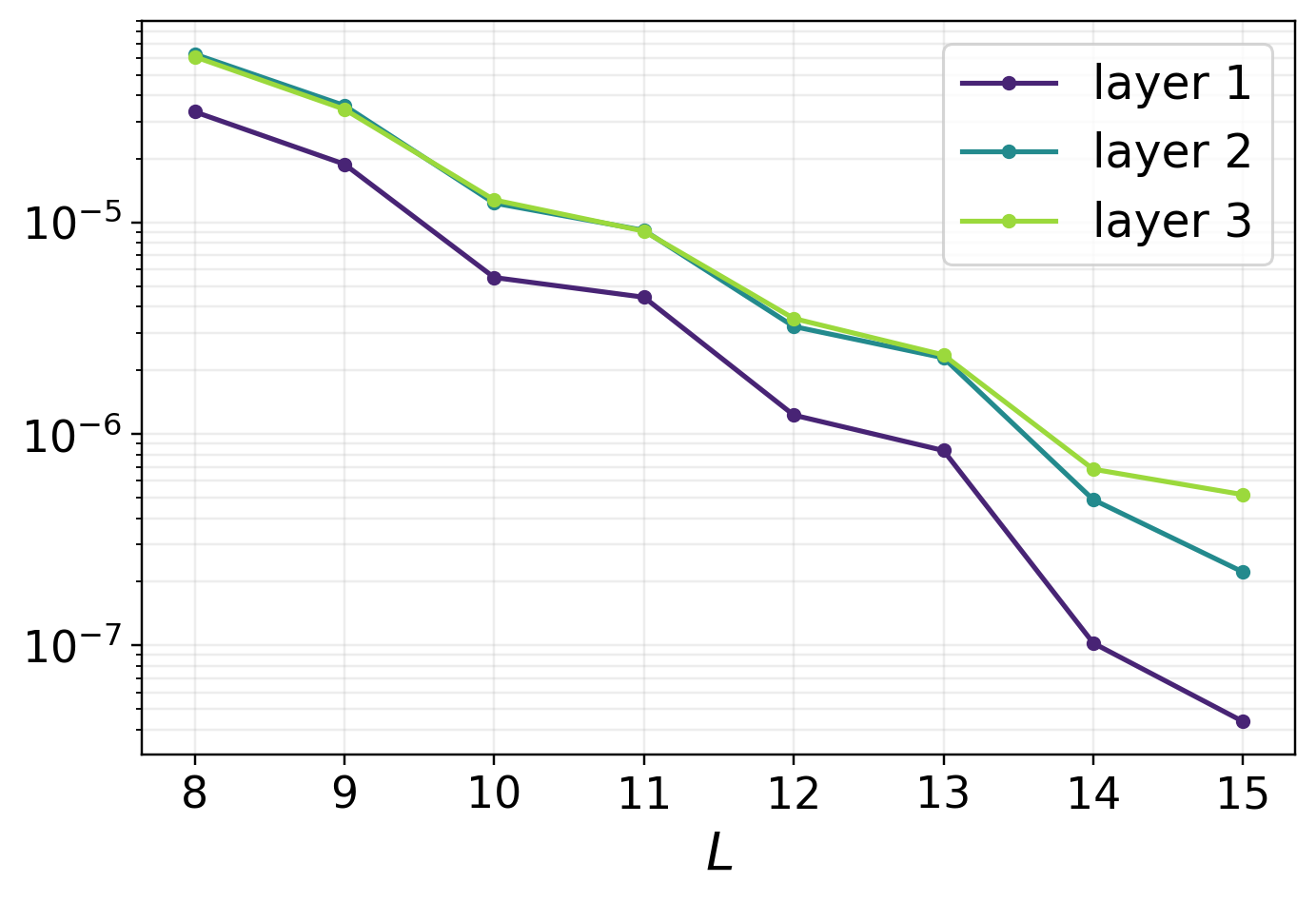}

        \caption{Sym8: \(F(x)=\exp(x)\)}
        \label{fig:sym8-exp}
    \end{subfigure}

    \caption{Mean Frobenius covariance errors versus \(L\) at
    layers \(\ell=1,2,3\). Columns correspond to
    \(\nu=0.5,1.5,2\).}
    \label{fig:combined-wavelets}

\end{figure}

\section*{Acknowledgments}
The general proof strategy is our own; some intermediate steps and simplifications were developed with the assistance of an OpenAI language model. The authors have verified all such content and take full responsibility for the correctness of the final manuscript.
\clearpage
\bibliographystyle{siamplain}
\bibliography{references}

\appendix
\section{Proof of Lemma~\ref{lem:kernel_properties}}\label{app:kernel-properties-proof}\label{app:gaussian-representation}\label{prop:matern-gaussian-mixture}\label{lem:conv-gaussian-feature-factorisation} The five subsections prove parts~(i)--(v), respectively. We first introduce the Gaussian representation used throughout the proof.

For \(\nu>0\), setting the correlation-length parameter to \(1\) in the Gaussian mixture representation of \cite[Theorem~1]{TronarpKarvonenSarkka2018} and substituting \(a=(2u)^{-1}\) gives the mixing measure
\[
    d\mu_{\nu}(a)
    =
    \frac{\nu^\nu}{2^\nu \Gamma(\nu)}
    a^{-\nu-1}
    \exp\!\left(-\frac{\nu}{2 a}\right)\,da .
\]
The substitution \(t=\nu/(2a)\) gives
\[
    \int_0^\infty d\mu_\nu(a)
    =\frac{1}{\Gamma(\nu)}
      \int_0^\infty t^{\nu-1}e^{-t}\,dt
    =1.
\]
Thus \(\mu_\nu\) is a probability measure with a strictly positive density and full support on \((0,\infty)\).
At unit correlation length, the Mat\'ern radial profile defined in~\eqref{eq:maternradial} satisfies
\begin{equation}\label{eq:matern-profile-mixture}
    \phi_{\nu,1}(s)
    =\sigma^2\int_0^\infty e^{-as^2}\,d\mu_\nu(a),
    \qquad s\ge0.
\end{equation}
Let \(\lambda\) satisfy \eqref{eq:scale-field-bounds}. For \(a>0\), define the Gaussian features
\begin{equation}\label{eq:conv-feature-map}
    \Phi_a(x,z)
    :=\left(\frac{4a}{\pi}\right)^{n/4}
      \lambda(x)^{-n/4}
      \exp\!\left(-\frac{2a\|z-x\|_2^2}{\lambda(x)}\right).
\end{equation}
Following the Gaussian convolution construction of
\cite{Paciorek2003}, set
\begin{equation}\label{eq:conv-gaussian-factorisation}
    k_a(x,y)
    :=\int_{\R^n}\Phi_a(x,z)\Phi_a(y,z)\,dz.
\end{equation}
The features are nonnegative and satisfy \(\|\Phi_a(x,\cdot)\|_{L^2(\R^n)}=1\), so Cauchy--Schwarz gives \(0\le k_a\le1\). Completing the square gives 
\begin{align*}
    \frac{\|z-x\|_2^2}{\lambda(x)} + \frac{\|z-y\|_2^2}{\lambda(y)} = \frac{\|x-y\|_2^2}{\lambda(x)+\lambda(y)} + \frac{\lambda(x)+\lambda(y)}{\lambda(x)\lambda(y)}\Big\|z-\frac{\lambda(y)x+\lambda(x)y}{\lambda(x)+\lambda(y)}\Big\|^2
\end{align*}
and thus shows
\[
    k_a(x,y)
    =\frac{2^{n/2}\lambda(x)^{n/4}\lambda(y)^{n/4}}
           {(\lambda(x)+\lambda(y))^{n/2}}
      \exp\!\left(-\frac{2a\|x-y\|_2^2}{\lambda(x)+\lambda(y)}\right).
\]
Using \eqref{eq:matern-profile-mixture} with \(s=\sqrt{2}\|x-y\|_2/\sqrt{\lambda(x)+\lambda(y)}\) gives the feature representation
\begin{equation}\label{eq:conv-matern-feature-mixture}
    k_{\conv}^{\lambda,k_{\M(\nu)}}(x,y)
    =\sigma^2\int_0^\infty k_a(x,y)\,d\mu_\nu(a).
\end{equation}
The explicit formula for \(k_a\) also shows that \((x,y,a)\mapsto k_a(x,y)\) is jointly continuous on \(\overline\Omega\times\overline\Omega\times(0,\infty)\).

Finally, the substitution \(t=\nu/(2a)\) gives, for \(0\le s<\nu\),
\begin{equation}\label{eq:matern-mixture-moments}
    \int_0^\infty a^s\,d\mu_\nu(a)
    =\left(\frac\nu2\right)^s\frac{\Gamma(\nu-s)}{\Gamma(\nu)}.
\end{equation}

\subsection{Weighted trace}
\label{app:gaussian-feature-sobolev}
We prove Lemma~\ref{lem:conv-matern-sobolev-trace} using two auxiliary estimates: a Sobolev product bound and an integrated Sobolev bound for the Gaussian features.

\begin{lemma}\label{lem:integer-sobolev-product}
Let \(\gamma_1,\dots,\gamma_s\in \N^n\) be nonzero multi-indices such that \(\sum_{j=1}^s |\gamma_j|\le r\).
Then
\[
    \left\|
        \prod_{j=1}^s \partial^{\gamma_j}u
    \right\|_{L^2(\Omega)}
    \le
    C_{\Omega,n,r,s}\|u\|_{H^r(\Omega)}^s,
    \qquad u\in H^r(\Omega).
\]
The case \(s=0\) is interpreted as the empty product \(1\).
\end{lemma}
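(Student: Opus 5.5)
The plan is to expand the product with the Leibniz-free route: Hölder's inequality plus Gagliardo--Nirenberg-type (or simply Sobolev-embedding) bounds on each factor. Write $m_j:=|\gamma_j|\ge1$ and $m:=\sum_j m_j\le r$. Each factor $\partial^{\gamma_j}u$ lies in $H^{r-m_j}(\Omega)$, with norm at most $\|u\|_{H^r(\Omega)}$. The task is to choose exponents $p_1,\dots,p_s\in[2,\infty]$ with $\sum_j 1/p_j=1/2$ and Sobolev embeddings $H^{r-m_j}(\Omega)\hookrightarrow L^{p_j}(\Omega)$. These embeddings hold on the bounded Lipschitz domain $\Omega$, via a Stein extension operator, whenever $1/p_j\ge 1/2-(r-m_j)/n$, with $p_j<\infty$ in the borderline case $r-m_j=n/2$ and $p_j=\infty$ allowed when $r-m_j>n/2$. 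Hölder then gives
\[
\Bigl\|\prod_j\partial^{\gamma_j}u\Bigr\|_{L^2}\le\prod_j\|\partial^{\gamma_j}u\|_{L^{p_j}}\le C\prod_j\|u\|_{H^r}=C\|u\|_{H^r}^s .
\]

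The key step is the exponent bookkeeping. Set $\theta_j:=\max\{0,\tfrac12-\tfrac{r-m_j}{n}\}$, the smallest admissible value of $1/p_j$. We need $\sum_j\theta_j\le\tfrac12$, after which the leftover $\tfrac12-\sum_j\theta_j$ can be distributed by increasing some $1/p_j$. This is allowed, since $\Omega$ is bounded and hence $L^{p}\subset L^{q}$ for $q\le p$. I would split into two cases.

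\emph{At most one factor has a positive $\theta_j$.} Then $\theta_j\le\tfrac12$ trivially, since $r-m_j\ge0$. The remaining factors satisfy $r-m_i>n/2$ and are bounded in $L^\infty$. Any factor with $r-m_j=n/2$ exactly goes into $L^{q}$ with $q$ large but finite.

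\emph{Several factors have positive $\theta_j$.} Let $S$ be the set of such indices. Then
\[
\sum_{j\in S}\theta_j=\frac{|S|}{2}-\frac{|S|r-\sum_{S}m_j}{n}\le\frac{|S|}{2}-\frac{(|S|-1)r}{n}<\frac{|S|}{2}-\frac{|S|-1}{2}=\frac12 ,
\]
using $\sum_S m_j\le r$ and $r>n/2$. The inequality is strict, which leaves room for the borderline $p_j<\infty$ issue.

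The main obstacle I expect is exactly this borderline and endpoint handling: the cases $r-m_j=n/2$, where $L^\infty$ fails, and $r-m_j=0$, where the factor is only in $L^2$. The strict inequality above resolves it, because whenever $|S|\ge2$ there is slack to take $p_j$ finite. If some $m_j=r$, then $s=1$ and the claim is the trivial bound $\|\partial^\gamma u\|_{L^2}\le\|u\|_{H^r}$. The constant depends on $\Omega,n,r,s$ through the embedding constants, and the case $s=0$ is immediate since $\|1\|_{L^2}=|\Omega|^{1/2}$.
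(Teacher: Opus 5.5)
Your proof is correct and takes the same route as the paper: H\"older's inequality with $\sum_j 1/p_j\le\tfrac12$, combined with a Sobolev embedding for each factor $\partial^{\gamma_j}u$. The paper fixes $p_j:=2r/m_j$ once and for all. Then $\sum_j 1/p_j=\sum_j m_j/(2r)\le\tfrac12$ is immediate, the embedding condition $r-m_j\ge n(\tfrac12-\tfrac1{p_j})=\tfrac{n}{2r}(r-m_j)$ follows from $r>n/2$, and $p_j<\infty$ holds automatically, so your case split on $\theta_j$ and your separate treatment of the borderline $r-m_j=n/2$ are not needed.
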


\begin{proof}
    Set \(m_j:=|\gamma_j|\) and \(p_j:=2r/m_j\).  Since
\(r>n/2\) (Assumption~\ref{eq:fixed-r-assumption}) and $r\geq m_j$ by definition, we have
\[
    r-m_j\ge n\!\left(\frac12-\frac1{p_j}\right)\quad\implies\quad H^r(\Omega)\hookrightarrow W^{m_j,p_j}(\Omega).    
\]
Moreover
\(\sum_j(p_j/2)^{-1}=\sum_jm_j/r\le1\). H\"older's inequality for $\int_\Omega\prod_{j=1}^s (\partial^{\gamma_j}u)^2\,dx$ gives the result.
\end{proof}

\begin{lemma}
\label{lem:conv-gaussian-feature-sobolev}
Let \(\lambda\in C^\infty(\overline\Omega)\), and suppose \(
    0<\underline\lambda\le\lambda(x)\), \(x\in\overline\Omega\).
For \(a>0\), let \(\Phi_a\) be defined by
\eqref{eq:conv-feature-map}. Then
\begin{equation}\label{eq:conv-gaussian-feature-Hr}
    \int_{\R^n}
    \|\Phi_a(\cdot,z)\|_{H^r(\Omega)}^2\,dz
    \le
    C_{\Omega,n,r}
    (1+\underline\lambda^{-1})^{2r}
    (1+\|\lambda\|_{H^r(\Omega)})^{2r}
    \sum_{q=0}^{r}a^q.
\end{equation}
\end{lemma}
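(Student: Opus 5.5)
The plan is to differentiate \(\Phi_a(x,z)\) in \(x\) by the chain rule (Faà di Bruno) and bound each resulting term in \(L^2(\Omega\times\R^n)\) by Lemma~\ref{lem:integer-sobolev-product}. Write \(\Phi_a(x,z)=c_a\,G(\lambda(x),\sqrt{a}(z-x))\), where \(c_a=(4a/\pi)^{n/4}\) and \(G(\mu,w)=\mu^{-n/4}\exp(-2\|w\|^2/\mu)\). For a multi-index \(\gamma\) with \(|\gamma|\le r\), the derivative \(\partial_x^\gamma\Phi_a\) is a finite sum of terms of the form
\[
  c_a\,(\partial_\mu^{m}\partial_w^{\beta}G)(\lambda(x),\sqrt a(z-x))\,(-\sqrt a)^{|\beta|}\prod_{j=1}^{m}\partial^{\gamma_j}\lambda(x),
\]
with \(|\beta|+\sum_j|\gamma_j|=|\gamma|\) and nonzero \(\gamma_j\). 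Each \(x\)-derivative falling on the second argument produces one factor \(\sqrt a\). A derivative falling on \(\lambda\) produces no factor of \(a\), because \(\partial_\mu\) of the exponential gives \(2\|w\|^2/\mu^2\), which is scale-free in the variable \(w=\sqrt a(z-x)\). This bookkeeping is what yields the polynomial \(\sum_q a^q\) after squaring, with \(q=|\beta|\le r\).

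Next, I bound the \(G\)-factor pointwise. Every \(\partial_\mu^m\partial_w^\beta G(\mu,w)\) has the form \(\mu^{-n/4-m-|\beta|/2}\,P(w/\sqrt\mu)\,e^{-2\|w\|^2/\mu}\), where \(P\) is a polynomial. Since \(\mu=\lambda(x)\ge\underline\lambda\), the prefactor is at most \((1+\underline\lambda^{-1})^{n/4+m+|\beta|/2}\). The Gaussian-times-polynomial part I keep under the \(z\)-integral. For fixed \(x\), substituting \(w=\sqrt a(z-x)\) gives \(dz=a^{-n/2}dw\), which cancels \(c_a^2=(4a/\pi)^{n/2}\). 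What remains is \(\int|P(w/\sqrt\mu)|^2e^{-4\|w\|^2/\mu}dw\lesssim\mu^{n/2}\). That factor \(\mu^{n/2}\) cancels the \(\mu^{-n/2}\) from \(G^2\), so no upper bound on \(\lambda\) is needed. Fubini then reduces the estimate to
\[
  \int_{\R^n}\|\partial^\gamma\Phi_a(\cdot,z)\|_{L^2(\Omega)}^2dz\lesssim\sum_{\text{terms}}a^{|\beta|}(1+\underline\lambda^{-1})^{2m+|\beta|}\Bigl\|\prod_{j=1}^m\partial^{\gamma_j}\lambda\Bigr\|_{L^2(\Omega)}^2 .
\]
The product norm is at most \(C\|\lambda\|_{H^r}^{2m}\) by Lemma~\ref{lem:integer-sobolev-product}, with the empty product giving \(|\Omega|\). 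Since \(m\le r\) and \(2m+|\beta|\le 2r\), every term is dominated by \((1+\underline\lambda^{-1})^{2r}(1+\|\lambda\|_{H^r})^{2r}a^{q}\) with \(q=|\beta|\le r\). Summing over \(|\gamma|\le r\) gives \eqref{eq:conv-gaussian-feature-Hr}.

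The main obstacle is the \(\underline\lambda\)-exponent bookkeeping. The \(\mu\)-derivative acting through \(w/\sqrt\mu\) has to be counted carefully, so that the total power is \(-n/4-m-|\beta|/2\) and the \(\mu^{n/2}\) recovered from the \(w\)-integral cancels exactly. I would verify this by induction on \(|\gamma|\), tracking the homogeneity of \(G\): it satisfies \(G(t\mu,\sqrt t w)=t^{-n/4}G(\mu,w)\), so each \(\partial_\mu\) lowers the degree by one and each \(\partial_w\) lowers it by one half. A secondary point is justifying the interchange of derivatives and integrals and using pointwise derivatives. This is routine since \(\lambda\in C^\infty(\overline\Omega)\) and the Gaussian decay is uniform on \(\Omega\).
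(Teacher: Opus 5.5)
Your proposal is correct and is essentially the paper's argument. Your Fa\`a di Bruno expansion of $c_aG(\lambda(x),\sqrt a(z-x))$, combined with the scaling of $G$, yields exactly the paper's terms $\Phi_a\rho^qP(w)\prod_{j}(\partial^{\gamma_j}\lambda/\lambda)$ with $q=|\beta|$ and $s=m$; the paper derives these by a direct four-case induction instead, and the remaining steps (Gaussian change of variables, the count $2m+|\beta|\le 2r$, and Lemma~\ref{lem:integer-sobolev-product}) coincide. One small slip: the factor $\mu^{-n/4}$ should not be bounded by $(1+\underline\lambda^{-1})^{n/4}$ but kept to cancel the $\mu^{n/2}$ from the $w$-integral, which is what your final display (with exponent $2m+|\beta|$) in fact does.
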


\begin{proof}
For fixed $x$, set
\[
    \rho(x):=\sqrt{\frac{a}{\lambda(x)}}, \qquad w:=2\rho(x)(z-x), \qquad H(w):=\frac{1}{2}\lVert w\rVert_2^2-\frac{n}{4}.
\]
Then
\[
    \Phi_a(x,z) = \left(\frac{4a}{\pi}\right)^{n/4} \lambda(x)^{-n/4}e^{-\lVert w\rVert_2^2/2}.
\]
The change of variables $z\mapsto w$ gives the exact identity
\begin{equation}\label{eq:change-of-vars}
    |\Phi_a(x,z)|^2\,dz = \pi^{-n/2}e^{-\lVert w\rVert_2^2}\,dw.
\end{equation}
Hence, the integral in $z$ of $|\Phi_a|^2$ times the square of any fixed polynomial in $w$ is a finite Gaussian moment, independent of $a$, $x$, and $\lambda$. We next describe the structure of the $x$-derivatives of $\Phi_a$. Direct differentiation gives, for $i=1,\dots,n$,
\begin{subequations}
\begin{align}
    \partial_{x_i}\rho &= -\frac{1}{2}\rho\, \frac{\partial_{x_i}\lambda}{\lambda}, \label{eq:conv-gaussian-drho}\\
    \partial_{x_i}w &= -2\rho e_i -\frac{1}{2}w\frac{\partial_{x_i}\lambda}{\lambda}, \label{eq:conv-gaussian-dw}\\
    \partial_{x_i}\Phi_a &= \Phi_a\left( 2\rho w_i +H(w)\frac{\partial_{x_i}\lambda}{\lambda} \right). \label{eq:conv-gaussian-dphi}
\end{align}
\end{subequations}
We claim that, for every multi-index $\alpha$ with $|\alpha|=m\le r$, $\partial_x^\alpha\Phi_a$ is a finite linear combination of terms of the form
\begin{equation}\label{eq:term-form}
    \Phi_a(x,z)\, \rho(x)^q P(w) \prod_{j=1}^{s} \frac{\partial^{\gamma_j}\lambda(x)}{\lambda(x)},
\end{equation} where $P$ is a polynomial, $\gamma_1,\dots,\gamma_s$ are nonzero multi-indices, and
\begin{equation}\label{eq:index-budget}
    q+\sum_{j=1}^{s}|\gamma_j|=m, \qquad 0\le q\le m.
\end{equation}
Moreover, for \(m\le r\), the number of terms, the coefficients of the linear combination, and the coefficients of the polynomials \(P\) are bounded in terms of \(n\) and \(r\) only. Each polynomial has degree at most \(2m\).

We prove the claim by induction on $m$. For $m=0$, it is immediate, with $q=0$, $s=0$, and $P=1$.
Suppose the claim holds at order $m<r$, and consider one term
\[
    T
    :=
    \Phi_a\,\rho^q P(w)
    \prod_{j=1}^{s}
    \frac{\partial^{\gamma_j}\lambda}{\lambda},\qquad
    \text{ where }\qquad
    q+\sum_{j=1}^{s}|\gamma_j|=m.
\]
We differentiate $T$ with respect to $x_i$, which leads to four cases.

\medskip
\noindent
\emph{Case 1: the derivative falls on $\Phi_a$.}
Using \eqref{eq:conv-gaussian-dphi},
\begin{align*}
    (\partial_{x_i}\Phi_a)\rho^qP(w)
    \prod_{j=1}^{s}\frac{\partial^{\gamma_j}\lambda}{\lambda}=
    2\Phi_a\rho^{q+1}
    &\bigl(w_iP(w)\bigr)
    \prod_{j=1}^{s}\frac{\partial^{\gamma_j}\lambda}{\lambda}\\&+
    \Phi_a\rho^q
    \bigl(H(w)P(w)\bigr)
    \frac{\partial_{x_i}\lambda}{\lambda}
    \prod_{j=1}^{s}\frac{\partial^{\gamma_j}\lambda}{\lambda}.
\end{align*}
In the first term, $q$ is replaced by $q+1$, while in the
second term a new factor corresponding to the multi-index $e_i$
is introduced. Hence each resulting term satisfies \(q+\sum_j|\gamma_j|=m+1\).

\medskip
\noindent
\emph{Case 2: the derivative falls on $\rho^q$.}
By \eqref{eq:conv-gaussian-drho},
\[
    \partial_{x_i}(\rho^q)
    =
    q\rho^{q-1}\partial_{x_i}\rho
    =
    -\frac{q}{2}\rho^q
    \frac{\partial_{x_i}\lambda}{\lambda}.
\]
Thus the resulting term is
\[
    -\frac{q}{2}
    \Phi_a\rho^qP(w)
    \frac{\partial_{x_i}\lambda}{\lambda}
    \prod_{j=1}^{s}
    \frac{\partial^{\gamma_j}\lambda}{\lambda}.
\]
Here $q$ is unchanged, while a new first-order derivative of
$\lambda$ is introduced. Thus \(q+\sum_j|\gamma_j|=m+1\) for the resulting term.

\medskip
\noindent
\emph{Case 3: the derivative falls on $P(w)$.}
Using \eqref{eq:conv-gaussian-dw},
\[
    \partial_{x_i}P(w)
    =
    -2\rho\,\partial_{w_i}P(w)
    -\frac{1}{2}
    \frac{\partial_{x_i}\lambda}{\lambda}
    w\cdot\nabla_wP(w).
\]
Hence
\begin{align*}
    \Phi_a\rho^q\partial_{x_i}P(w)
    \prod_{j=1}^{s}\frac{\partial^{\gamma_j}\lambda}{\lambda}
    =
    -2\Phi_a\rho^{q+1}
    \partial_{w_i}&P(w)
    \prod_{j=1}^{s}\frac{\partial^{\gamma_j}\lambda}{\lambda}\\&-
    \frac{1}{2}
    \Phi_a\rho^q
    \bigl(w\cdot\nabla_wP(w)\bigr)
    \frac{\partial_{x_i}\lambda}{\lambda}
    \prod_{j=1}^{s}\frac{\partial^{\gamma_j}\lambda}{\lambda}.
\end{align*}
Again, the first term replaces $q$ by $q+1$, while the second
introduces a new first-order derivative of $\lambda$. Thus both terms satisfy \(q+\sum_j|\gamma_j|=m+1\).

\medskip
\noindent
\emph{Case 4: the derivative falls on one of the $\lambda$-factors.}
For $k\in\{1,\dots,s\}$,
\begin{equation}\label{eq:lambda-factor}
    \partial_{x_i}
    \left(
        \frac{\partial^{\gamma_k}\lambda}{\lambda}
    \right)
    =
    \frac{\partial^{\gamma_k+e_i}\lambda}{\lambda}
    -
    \frac{\partial^{\gamma_k}\lambda}{\lambda}
    \frac{\partial_{x_i}\lambda}{\lambda}.
\end{equation}
Therefore the corresponding contribution is a sum of terms of the
form
\[
    \Phi_a\rho^qP(w)
    \frac{\partial^{\gamma_k+e_i}\lambda}{\lambda}
    \prod_{j\ne k}
    \frac{\partial^{\gamma_j}\lambda}{\lambda},
    \qquad\text{ or }\qquad
    -\Phi_a\rho^qP(w)
    \frac{\partial^{\gamma_k}\lambda}{\lambda}
    \frac{\partial_{x_i}\lambda}{\lambda}
    \prod_{j\ne k}
    \frac{\partial^{\gamma_j}\lambda}{\lambda}.
\]
In the first term, $|\gamma_k|$ is increased by one; in the second,
a new multi-index $e_i$ is introduced. Thus the total derivative
order carried by the $\lambda$-factors increases by one in either
case.

It follows that every term arising from $\partial_{x_i}T$ again has
the form \eqref{eq:term-form}, and in every case
\[
    q+\sum_{j=1}^{s}|\gamma_j|=m+1.
\]
This proves \eqref{eq:index-budget} at order \(m+1\).

It remains to control the polynomial degree. If $\deg P\le 2m$,
then the polynomial factors arising in the four cases are obtained
from $P$ by multiplication by $w_i$, multiplication by $H(w)$,
differentiation with respect to $w_i$, or application of
$w\cdot\nabla_w$. Since $H$ has degree two,
\[
    \deg(w_iP)\le 2m+1,
    \qquad
    \deg(H P)\le 2m+2,
\]
while
\[
    \deg(\partial_{w_i}P)\le 2m-1,
    \qquad
    \deg(w\cdot\nabla_wP)\le 2m.
\]
Hence every polynomial arising at order \(m+1\) has degree at most \(2(m+1)\). Each differentiation produces at most a number of terms depending on \(n\) and \(r\), through polynomial operations whose coefficients are bounded in terms of \(n\) and \(r\). Since at most \(r\) differentiations are taken, the number of terms and all coefficients remain bounded in terms of \(n\) and \(r\). This completes the induction.

Since \(\rho^q=a^{q/2}\lambda^{-q/2}\), each term in an $m$-th derivative can therefore be written as
\begin{equation}\label{eq:term-rewritten}
     \Phi_a(x,z)\, a^{q/2}\lambda(x)^{-q/2-s} P(w) \prod_{j=1}^{s}\partial^{\gamma_j}\lambda(x),
\end{equation} with
\(q+\sum_{j=1}^{s}|\gamma_j|=m\).
Fix $|\alpha|=m\le r$. By the preceding derivative expansion~\eqref{eq:term-form}, Cauchy--Schwarz over the finite sum, followed by \eqref{eq:change-of-vars}, gives
\begin{equation}\label{eq:deriv-bound}
    \int_{\mathbb{R}^n} |\partial_x^\alpha\Phi_a(x,z)|^2\,dz \le C_{n,r} \sum a^q\lambda(x)^{-q-2s} \left| \prod_{j=1}^{s}\partial^{\gamma_j}\lambda(x) \right|^2,
\end{equation}
where the finite sum is over the terms in the derivative expansion. All Gaussian polynomial moments have been absorbed into $C_{n,r}$. For every such term, since each $\gamma_j$ is nonzero, \[ s \le \sum_{j=1}^{s}|\gamma_j| = m-q. \]
Hence, \(q+2s \le 2m-q \le 2r\), and since $\lambda(x)\ge\underline{\lambda}>0$ we see that \(\lambda(x)^{-q-2s} \le (1+\underline{\lambda}^{-1})^{2r}\).
Identity \eqref{eq:index-budget} also gives \(\sum_{j=1}^{s}|\gamma_j| = m-q \le r\), and hence
Lemma~\ref{lem:integer-sobolev-product} gives, for every term,
\[
\left\| \prod_{j=1}^{s}\partial^{\gamma_j}\lambda \right\|_{L^2(\Omega)}^2 \le C_{\Omega,n,r} \bigl(1+\|\lambda\|_{H^r(\Omega)}\bigr)^{2r},
\]
where the empty product for $s=0$ is interpreted as $1$. Integrating \eqref{eq:deriv-bound} over $x$ and using $0\le q\le m$, we obtain
\[
\int_{\mathbb{R}^n} \|\partial_x^\alpha\Phi_a(\cdot,z)\|_{L^2(\Omega)}^2\,dz \le C_{\Omega,n,r} (1+\underline{\lambda}^{-1})^{2r} \bigl(1+\|\lambda\|_{H^r(\Omega)}\bigr)^{2r} \sum_{q=0}^{m}a^q.
\]
Summing over \(|\alpha|\le r\) proves \eqref{eq:conv-gaussian-feature-Hr}.
\end{proof}
\begin{proof}[Proof of Lemma~\ref{lem:conv-matern-sobolev-trace}]
Assume first that \(\lambda\in C^\infty(\overline\Omega)\), and fix \(\iota\in\Lambda_J\). Since \(\Phi_a\ge0\), Tonelli's theorem and \eqref{eq:conv-gaussian-factorisation} give
\begin{align*}
    \int_0^\infty
    \int_{\R^n}
    \iint_{\Omega\times\Omega}
    \Phi_a(x,z)\Phi_a(y,z)
    &|\psi_\iota(x)\psi_\iota(y)|
    \,dx\,dy\,dz\,d\mu_\nu(a)
    \\&=
    \int_0^\infty
    \iint_{\Omega\times\Omega}
    k_a(x,y)
    |\psi_\iota(x)\psi_\iota(y)|
    \,dx\,dy\,d\mu_\nu(a),
\end{align*}
which is at most \(\|\psi_\iota\|_{L^1(\Omega)}^2<\infty\), since \(0\le k_a\le1\), \(\mu_\nu\) is a probability measure, and \(\psi_\iota\in L^2(\Omega)\) with \(\Omega\) bounded. Fubini's theorem therefore applies, and the \(x\)- and \(y\)-integrals factor:
\[
    [\bC_J]_{\iota,\iota}
    =
    \sigma^2
    \int_0^\infty
    \int_{\R^n}
    \bigl|
        \langle
            \Phi_a(\cdot,z),
            \psi_\iota
        \rangle_{L^2(\Omega)}
    \bigr|^2
    dz\,d\mu_\nu(a).
\]
Since \(\Lambda_J\) is finite, the sum over \(\Lambda_J\) with weights \(2^{2rj(\iota)}\) may be taken inside the integrals. For each \(a>0\) and \(z\in\R^n\) the function
\(x\mapsto\Phi_a(x,z)\) lies in \(C^\infty(\overline\Omega)\subset H^r(\Omega)\),
since \(\lambda\) is smooth and bounded below, so
\eqref{eq:wavelet-analysis-Hr} may be applied with \(v=\Phi_a(\cdot,z)\),
giving
\[
\begin{aligned}
    \tr
    \bigl(
        D_{r,J}
        \bC_J
        D_{r,J}
    \bigr)
    &=
    \sigma^2
    \int_0^\infty
    \int_{\R^n}
    \sum_{\iota\in\Lambda_J}
    2^{2rj(\iota)}
    \bigl|
        \langle
            \Phi_a(\cdot,z),
            \psi_\iota
        \rangle
    \bigr|^2
    dz\,d\mu_\nu(a)
    \\
    &\le
    A_r\sigma^2
    \int_0^\infty
    \int_{\R^n}
    \|\Phi_a(\cdot,z)\|_{H^r(\Omega)}^2
    \,dz\,d\mu_\nu(a).
\end{aligned}
\]
Combining \eqref{eq:conv-gaussian-feature-Hr} with the moment identity \eqref{eq:matern-mixture-moments} proves \eqref{eq:conv-matern-weighted-trace-direct} for smooth \(\lambda\).

Using a bounded Sobolev extension
operator followed by mollification \cite{RAAdams_JJFFournier_2003}, choose
\(\lambda_m\in C^\infty(\overline\Omega)\) with \(\lambda_m\to\lambda\) in
\(H^r(\Omega)\). Since \(r>n/2\), the convergence is uniform on
\(\overline\Omega\), so after discarding finitely many indices
\[
    \lambda_m(x)\ge\tfrac12\underline\lambda,
    \quad x\in\overline\Omega,
    \qquad
    \|\lambda_m\|_{H^r(\Omega)}
    \le
    1+\|\lambda\|_{H^r(\Omega)}.
\]
Since \((1+2\underline\lambda^{-1})^{2r}\le2^{2r}(1+\underline\lambda^{-1})^{2r}\) and \((1+\|\lambda_m\|_{H^r(\Omega)})^{2r}\le2^{2r}(1+\|\lambda\|_{H^r(\Omega)})^{2r}\), the smooth case gives \eqref{eq:conv-matern-weighted-trace-direct} for every \(\lambda_m\), with the right-hand side evaluated at \(\lambda\) and a constant independent of \(m\) and \(J\).

By \eqref{eq:conv-matern-feature-mixture}, dominated convergence gives \(k_{\conv}^{\lambda_m,k_{\M(\nu)}}(x,y) \to k_{\conv}^{\lambda,k_{\M(\nu)}}(x,y)\) for every \(x,y\). All these kernels are bounded by \(\sigma^2\). Since \(\psi_\iota\otimes\psi_{\iota'}\in L^1(\Omega\times\Omega)\), dominated convergence also gives convergence of each covariance-matrix entry at every fixed \(J\).
For each fixed \(J\), the weighted traces converge to \(\tr(D_{r,J}\bC_JD_{r,J})\), so the bound passes to the limit with the same constant. Since this constant is independent of \(J\), taking the supremum over \(J\) proves \eqref{eq:conv-matern-weighted-trace-direct}.
\end{proof}

\subsection{Uniform traces and truncation bounds}\label{app:composition-sobolev}
\begin{proof}[Proof of Lemma~\ref{lem:dgp-scale-field-bounds}]
For \(R\ge0\), define
\[
    M_{F,r}(R)
    :=
    \max_{0\le k\le r}
    \sup_{|t|\le R}|F^{(k)}(t)|,
\]
which is finite and nondecreasing in \(R\) since \(F\in C^r(\R)\).
For smooth \(g\), repeated use of the chain and product rules gives, for
every \(1\le|\alpha|\le r\), (this is a variant of Fa\`a di Bruno's formula)
\[
    \partial^\alpha(F(g))
    =
    \sum_{k=1}^{|\alpha|}
    F^{(k)}(g)
    \sum_{\substack{
        \gamma_1+\cdots+\gamma_k=\alpha\\
        |\gamma_j|\ge1
    }}
    c_{\alpha;\gamma_1,\dots,\gamma_k}
    \prod_{j=1}^{k}\partial^{\gamma_j}g.
\]
In every summand,
\(\sum_j|\gamma_j|=|\alpha|\le r\).  Hence Lemma~\ref{lem:integer-sobolev-product},
together with Sobolev embedding, yields
\[
    \|F(g)\|_{H^r(\Omega)}
    \le
    C_{\Omega,n,r}
    M_{F,r}\!\left(
        C^{\mathrm{emb}}_{\Omega,r}\|g\|_{H^r}
    \right)
    (1+\|g\|_{H^r})^r.
\]
For general \(g\in H^r(\Omega)\), choose \(g_m\in C^\infty(\overline\Omega)\)
such that \(g_m\to g\) in \(H^r(\Omega)\) similarly to the previous proof and show \(F(g_m)\to F(g)\) in \(L^2(\Omega)\). By weak compactness, a subsequence converges weakly in \(H^r(\Omega)\), with limit \(F(g)\) by the uniqueness of limits. Weak lower semicontinuity of norms proves
\eqref{eq:dgp-composition-bound}.

If \(g\in\mathcal B_r(B)\), then
\[
    |g(x)|
    \le C^{\mathrm{emb}}_{\Omega,r}B
    \qquad
    \text{for every }x\in\overline\Omega.
\]
Since \(F\) is continuous and strictly positive, its minimum and
maximum on this compact interval are finite and the minimum is
strictly positive.  
For \(g\in\mathcal B_r(B)\), applying Lemma~\ref{lem:conv-matern-sobolev-trace} with \(\lambda=F(g)\), using \eqref{eq:dgp-composition-bound} and \(F(g)\ge\underline\lambda(B)\), gives \eqref{eq:dgp-trace-majorant-explicit}. The initial stationary Mat\'ern covariance is the case \(\lambda\equiv\lambda_0^2\). Finally, Theorem~\ref{thm:trace-controlled-scale-tail}, together with \eqref{eq:dgp-trace-majorant}, gives \eqref{eq:dgp-uniform-tail-bound}.
\end{proof}

\subsection{Covariance stability}
We first establish a log-scale Lipschitz estimate for the kernel, then deduce Lemma~\ref{lem:dgp-L2-covariance-lipschitz}.
\label{app:log-lipschitz}
\begin{lemma}\label{lem:conv-matern-log-scale-lipschitz}
Let \(\nu>1\), and for \(a,b>0\) and \(z\in\R^n\) define
\[
\kloc_\nu(a,b,z)
    :=
\frac{2^{n/2}a^{n/4}b^{n/4}}{(a+b)^{n/2}}
\phi_{\nu,1}\!\left(
\frac{\sqrt{2}\,\|z\|_2}{\sqrt{a+b}}
    \right),
\]
so that \(k_\ell(x,y;g)=\kloc_{\nu_\ell}\bigl(F(g(x)),F(g(y)),x-y\bigr)\)
for the convolution Mat\'ern kernel \eqref{eq:convolution-kernel}. With \(C_{\log}
    :=
\sigma^2\left(\frac n4+\frac1e\right)\), we have, for all \(a_1,a_2,b_1,b_2>0\) and \(z\in\R^n\),
\begin{equation}\label{eq:conv-matern-log-scale-lipschitz}
|\kloc_\nu(a_1,b_1,z)-\kloc_\nu(a_2,b_2,z)|
\le
C_{\log}
\left(
    |\log a_1-\log a_2|
+|\log b_1-\log b_2|
\right).
\end{equation}
\end{lemma}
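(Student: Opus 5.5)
We will work in log-coordinates. Write $a=e^{u}$, $b=e^{v}$ and consider $G(u,v):=\kloc_\nu(e^u,e^v,z)$ for fixed $z$. By the mean value theorem along the segment joining $(\log a_1,\log b_1)$ to $(\log a_2,\log b_2)$, it suffices to show
\[
    \sup_{u,v}\bigl(|\partial_uG|,|\partial_vG|\bigr)\le C_{\log}.
\]
By the symmetry of $\kloc_\nu$ in $(a,b)$, only $\partial_u G=a\,\partial_a\kloc_\nu$ needs to be bounded. We factor $\kloc_\nu=A(a,b)\,\phi_{\nu,1}(s)$, where
\[
    A(a,b)=\frac{2^{n/2}a^{n/4}b^{n/4}}{(a+b)^{n/2}},
    \qquad
    s=\frac{\sqrt2\,\|z\|_2}{\sqrt{a+b}} .
\]
The product rule then gives two contributions to $a\,\partial_a\kloc_\nu$.

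\textbf{Prefactor term.} A direct computation gives
\[
    a\,\partial_a\log A=\frac n4-\frac n2\,\frac{a}{a+b}=\frac n4\cdot\frac{b-a}{a+b},
\]
whose absolute value is at most $n/4$. By AM--GM, $0<A\le1$, and by the mixture representation \eqref{eq:matern-profile-mixture}, $0\le\phi_{\nu,1}\le\sigma^2$. Hence this term is bounded by $\sigma^2 n/4$.

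\textbf{Profile term.} Here $a\,\partial_a s=-\tfrac12\,\frac{a}{a+b}\,s$, so the contribution is $A\cdot\bigl(-\tfrac12\tfrac{a}{a+b}\bigr)\,s\,\phi_{\nu,1}'(s)$. Its modulus is at most $\tfrac12\sup_{s\ge0}s|\phi_{\nu,1}'(s)|$. To bound this supremum we differentiate the Gaussian mixture \eqref{eq:matern-profile-mixture} under the integral sign:
\[
    s\,\phi_{\nu,1}'(s)=-2\sigma^2\int_0^\infty a s^2e^{-as^2}\,d\mu_\nu(a).
\]
Since $\sup_{t\ge0}te^{-t}=e^{-1}$, we get $|s\phi'_{\nu,1}(s)|\le 2\sigma^2/e$, and the profile term is bounded by $\sigma^2/e$.

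Differentiation under the integral is justified by dominated convergence, with the dominating function $a\,s^2e^{-as^2}\le e^{-1}$ locally uniformly in $s>0$, together with the first mixture moment \eqref{eq:matern-mixture-moments} at $s=1$ for the bound $a e^{-as^2}\le a$. That moment is finite exactly because $\nu>1$, which is where the hypothesis enters. Alternatively, the closed form $\frac{d}{dx}[x^\nu K_\nu(x)]=-x^\nu K_{\nu-1}(x)$ could be used.

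Adding the two contributions gives $|\partial_uG|\le\sigma^2(n/4+1/e)=C_{\log}$, and the same bound holds for $\partial_vG$. The mean value theorem, applied to the path that first moves in $u$ and then in $v$, yields \eqref{eq:conv-matern-log-scale-lipschitz} with the sum of the two increments.

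The main obstacle is the uniform-in-$s$ bound on $s\phi'_{\nu,1}(s)$, together with the justification for differentiating the mixture. The mixture trick reduces it to the elementary inequality $te^{-t}\le e^{-1}$. Care is needed at $z=0$ (where $s=0$): there the profile term vanishes, so the estimate holds trivially.
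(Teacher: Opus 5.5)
Your proposal is correct and follows essentially the same route as the paper. You split $a\,\partial_a\kloc_\nu$ into a prefactor term and a profile term, and bound them by $\sigma^2 n/4$ and $\sigma^2/e$ using $0<A\le1$, the Gaussian mixture, $te^{-t}\le e^{-1}$ and the finite first moment for $\nu>1$; you then pass to the Lipschitz estimate in $(\log a,\log b)$ one coordinate at a time, exactly as the paper does.

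The only blemish is notational. You reuse $a$ for both the kernel argument and the mixing variable, and $s$ for both the radial argument and the moment exponent. The paper avoids this by writing $q$ for the mixing variable.
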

\begin{proof}
Since \(\nu>1\), \eqref{eq:matern-mixture-moments} gives \(\int_0^\infty q\,d\mu_\nu(q)<\infty\). Dominated convergence therefore justifies differentiation of \eqref{eq:matern-profile-mixture} under the integral sign on compact \(s\)-intervals, giving
\[
\phi_{\nu,1}'(s)
    =
-2\sigma^2s\int_0^\infty qe^{-qs^2}\,d\mu_\nu(q).
\]
Since the integrand defining \(\phi_{\nu,1}\) is bounded by \(1\), and since
\(xe^{-x}\le e^{-1}\) for \(x\ge0\),
\begin{equation}\label{eq:matern-profile-log-derivative-bound}
    |\phi_{\nu,1}(s)|\le\sigma^2,
\qquad
    s|\phi_{\nu,1}'(s)|
    =
2\sigma^2\int_0^\infty qs^2e^{-qs^2}\,d\mu_\nu(q)
\le
\frac{2\sigma^2}e.
\end{equation}
Now set
\[
\theta:=\frac{a}{a+b}\in(0,1),
\qquad
R:=\frac{\sqrt2\,\|z\|_2}{\sqrt{a+b}},
\qquad
P:=\frac{2^{n/2}a^{n/4}b^{n/4}}{(a+b)^{n/2}},
\]
so that \(\kloc_\nu(a,b,z)=P\,\phi_{\nu,1}(R)\), and \(0<P\le1\) by the
arithmetic--geometric mean inequality. 
Since \(a\,\partial_aP=\frac n4(1-2\theta)P\) and
\(a\,\partial_aR=-\frac\theta2R\), differentiation gives
\[
    a\,\partial_a\kloc_\nu
    =
    P\left[
        \frac n4(1-2\theta)\phi_{\nu,1}(R)
        -\frac\theta2 R\phi_{\nu,1}'(R)
    \right].
\]
Using \(P\le1\), \(|1-2\theta|\le1\), \(\theta\le1\), together with the two estimates in \eqref{eq:matern-profile-log-derivative-bound}, gives
\[
\sup_{a,b>0,\,z\in\R^n}
    |a\,\partial_a\kloc_\nu(a,b,z)|
\le
\frac n4\sigma^2+\frac1e\sigma^2
    =C_{\log},
\]
and, by symmetry in \(a\) and \(b\), the same bound holds for
\(|b\,\partial_b\kloc_\nu|\). Neither bound in
\eqref{eq:matern-profile-log-derivative-bound} involves \(\nu\), so neither
does \(C_{\log}\).

For \eqref{eq:conv-matern-log-scale-lipschitz}, fix \(b,z\) and set \(h(u):=\kloc_\nu(e^u,b,z)\). The preceding estimate bounds \(h'(u)=e^u\partial_a\kloc_\nu(e^u,b,z)\) by \(C_{\log}\), so the fundamental theorem of calculus gives
\[
|\kloc_\nu(a_1,b,z)-\kloc_\nu(a_2,b,z)|
=\left|\int_{\log a_2}^{\log a_1}h'(u)\,du\right|
\le C_{\log}|\log a_1-\log a_2|,
\]
and likewise \(|\kloc_\nu(a_2,b_1,z)-\kloc_\nu(a_2,b_2,z)|\le
C_{\log}|\log b_1-\log b_2|\). Combining these estimates by the triangle inequality proves \eqref{eq:conv-matern-log-scale-lipschitz}.
\end{proof}

\begin{proof}[Proof of Lemma~\ref{lem:dgp-L2-covariance-lipschitz}]
By Assumption~\ref{as:dgp-fixed-regularity}, \(\nu_\ell>r\ge1\), so
Lemma~\ref{lem:conv-matern-log-scale-lipschitz} is available at layer
\(\ell\).  For \(x,y\in\Omega\), apply it with
\[
    a_1=F(f(x)),\quad b_1=F(f(y)),\qquad
    a_2=F(g(x)),\quad b_2=F(g(y)),
\]
all of which are finite and strictly positive because \(f,g\) are continuous
on \(\overline\Omega\) and \(F\) is strictly positive.
Since \(|\log F(s)-\log F(t)|\le L_F|s-t|\) by \eqref{eq:dgp-log-link-assumption},
\[
    |k_\ell(x,y;f)-k_\ell(x,y;g)|\le C_{\log}L_F\bigl(|f(x)-g(x)|+|f(y)-g(y)|\bigr).
\]
Using \((u+v)^2\le2(u^2+v^2)\) and integrating over \(\Omega\times\Omega\) gives
\[
    \|k_\ell(\cdot,\cdot;f)-k_\ell(\cdot,\cdot;g)\|_{L^2(\Omega^2)}
    \le
    2|\Omega|^{1/2}C_{\log}L_F
    \|f-g\|_{L^2(\Omega)}.
\]
Applying the analysis bound \eqref{eq:wavelet-analysis-L2} in each variable in turn yields
\[
    \sum_{\iota,\iota'\in\Lambda_J}
    |\langle k_\ell(f)-k_\ell(g),
      \psi_\iota\otimes\psi_{\iota'}\rangle|^2
    \le
    A_0^2\|k_\ell(f)-k_\ell(g)\|_{L^2(\Omega^2)}^2,
\]
which is the last inequality in \eqref{eq:dgp-global-frob-lipschitz}. The truncation only deletes entries, which gives the middle inequality, and Lemma~\ref{lem:psd-projection-nonexpansive} gives the first.
\end{proof}

\subsection{Strict positivity}
\label{app:strict-positivity}
\begin{proof}[Proof of Lemma~\ref{lem:conv-matern-integral-strict-positivity}]
Continuity and symmetry follow directly from \eqref{eq:convolution-kernel} and the
assumptions on \(\lambda\). 
Strict positive definiteness on finite sets of distinct points follows from \cite[Proposition~1]{dunlop2018deep} or~\cite[Theorem~1]{Paciorek2003}, with \(\Sigma(x)=\lambda(x)I_n\).
It remains to prove the strict integral inequality.  Let
\(u\in L^2(\Omega)\).
Using the feature representation \eqref{eq:conv-gaussian-factorisation}, for every \(a>0\), we get
\[
    Q_a(u)
    :=
    \int_\Omega\int_\Omega
    k_a(x,y)u(x)u(y)\,dx\,dy
    =
    \int_{\R^n}
    \left|
        \int_\Omega u(x)\Phi_a(x,z)\,dx
    \right|^2dz
    \ge0.
\]
  Moreover, \(a\mapsto Q_a(u)\) is continuous by dominated convergence, since \(k_a(x,y)\) is continuous in \(a\) and the integrands are dominated by \(|u(x)u(y)|\in L^1(\Omega\times\Omega)\).
The mixture representation \eqref{eq:conv-matern-feature-mixture} gives
\[
    Q(u)
    :=
    \int_\Omega\int_\Omega
    k_{\conv}^{\lambda,k_{\M(\nu)}}(x,y)u(x)u(y)\,dx\,dy
    =
    \sigma^2\int_0^\infty Q_a(u)\,d\mu_\nu(a)
    \ge0.
\]
Suppose now that \(Q(u)=0\). Since \(Q_a(u)\) is nonnegative and continuous in \(a\), and \(\mu_\nu\) has full support, \(Q_a(u)=0\) for every \(a>0\). Set
\[
    F_a(z):=\int_\Omega u(x)\Phi_a(x,z)\,dx,\qquad a>0.
\]
Then \(F_a=0\) in \(L^2(\R^n)\).  Also \(F_a\in L^1(\R^n)\), because
the Gaussian integral gives
\begin{align*}
    \int_{\R^n}|F_a(z)|\,dz&\le
    \int_\Omega|u(x)|
    \int_{\R^n}\Phi_a(x,z)\,dz\,dx\\&=
    \int_\Omega|u(x)|
    \left(\frac{\pi\lambda(x)}{a}\right)^{n/4}dx\le
    \left(\frac{\pi\overline\lambda}{a}\right)^{n/4}
    \|u\|_{L^1(\Omega)}<\infty.
\end{align*}
Its Fourier transform is therefore defined pointwise and has the form
\[
    \widehat F_a(\xi)
    =
    c_{a,n}
    \int_\Omega
    u(x)\lambda(x)^{n/4}e^{-i\xi\cdot x}
    e^{-\lambda(x)\|\xi\|_2^2/(8a)}\,dx,
\]
where \(c_{a,n}>0\).  Since \(F_a=0\) almost everywhere,
\(\widehat F_a\equiv0\).  With
\(v(x):=u(x)\lambda(x)^{n/4}\), this gives
\begin{equation}\label{eq:strict-positivity-weighted-fourier-identity}
    \int_\Omega
    v(x)e^{-i\xi\cdot x}
    e^{-\lambda(x)\|\xi\|_2^2/(8a)}\,dx
    =0,
    \qquad \xi\in\R^n,\ a>0.
\end{equation}
For each fixed \(\xi\in\R^n\), let \(a\to\infty\) in \eqref{eq:strict-positivity-weighted-fourier-identity}. The integrands are bounded in modulus by \(|v|\in L^1(\Omega)\), so dominated convergence gives \(\widehat v(\xi)=0\), where \(v\) is extended by zero outside \(\Omega\). Uniqueness of the Fourier transform on \(L^1\) gives \(v=0\) almost everywhere; see \cite[Corollary~8.27]{Folland1999}. Since \(\lambda\) is bounded away from zero, \(u=0\) almost everywhere. Thus \(Q(u)>0\) for every \(0\ne u\in L^2(\Omega)\), as required.
\end{proof}

\subsection{Lower spectral bounds}\label{app:lower-spectral-bounds}
\begin{proof}[Proof of Lemma~\ref{lem:dgp-finite-resolution-coercivity}]
The first assertion follows from Lemma~\ref{lem:conv-matern-integral-strict-positivity} with the constant function \(\lambda\equiv\lambda_0^2\), since the finite primal wavelet family is linearly independent by \eqref{eq:wavelet-biorthogonality}. For the second, suppose \(\kappa_{\ell,J}(B)=0\). Choose \(g_q\in\mathcal B_r(B)\) and unit vectors \(a_q\) such that
\(a_q^\top\CellJ{\ell}(g_q)a_q\to0\). After passing to subsequences,
\[
    g_q\rightharpoonup g_*\quad\text{in }H^r(\Omega),
    \qquad
    g_q\to g_*\quad\text{in }C(\overline\Omega),
    \qquad
    a_q\to a_*,\quad\|a_*\|_2=1,
\]
where the first two convergences use \(r>n/2\) and the compact Sobolev embedding
\cite{RAAdams_JJFFournier_2003}. Weak lower semicontinuity gives \(g_*\in\mathcal B_r(B)\). By Lemma~\ref{lem:dgp-L2-covariance-lipschitz},
\[
    \|\CellJ{\ell}(g_q)-\CellJ{\ell}(g_*)\|_{\op}\le\Lip_\ell\|g_q-g_*\|_{L^2(\Omega)}\to0,
\]
so \(a_*^\top\CellJ{\ell}(g_*)a_*=\lim_q a_q^\top\CellJ{\ell}(g_q)a_q=0\). Since \(F(g_*)\) is continuous and bounded above and away from zero, Lemma~\ref{lem:conv-matern-integral-strict-positivity} makes
\(\CellJ{\ell}(g_*)\) positive definite, a contradiction.
\end{proof}

\end{document}

%% file: ex_shared.tex
\usepackage{lipsum}
\usepackage{amsfonts}
\usepackage{graphicx}
\usepackage{epstopdf}
\usepackage{algorithmic}
\usepackage{amssymb}
\usepackage{xcolor}
\usepackage{mathrsfs}
\usepackage{subcaption}
\usepackage{enumitem}
\usepackage{microtype}
\usepackage[T1]{fontenc}

\newcommand{\R}{\mathbb{R}}
\newcommand{\E}{\mathbb{E}}
\newcommand{\Prob}{\mathbb{P}}
\newcommand{\N}{\mathbb{N}}
\newcommand{\bC}{\mathbf{C}}
\newcommand{\Cop}{\mathscr{C}}
\newcommand{\op}{\mathrm{op}}
\newcommand{\kloc}{k^{\mathrm{loc}}}
\newcommand{\nnz}{\mathrm{nnz}}

\newcommand{\CellJ}[1]{\bC_{#1}}
\newcommand{\CellJL}[1]{\bC_{#1}^{L}}

\DeclareMathOperator{\conv}{conv}
\DeclareMathOperator{\M}{Mat}
\DeclareMathOperator{\tr}{tr}
\DeclareMathOperator{\Lip}{Lip}
\ifpdf
  \DeclareGraphicsExtensions{.eps,.pdf,.png,.jpg}
\else
  \DeclareGraphicsExtensions{.eps}
\fi

\newsiamremark{remark}{Remark}
\newsiamremark{hypothesis}{Hypothesis}
\crefname{hypothesis}{Hypothesis}{Hypotheses}
\newsiamthm{claim}{Claim}
\newsiamthm{assumption}{Assumption}

\headers{Wavelet compression for sampling deep Gaussian processes}{Oskar Broukal, Michael Feischl, Conor Osborne}

\title{Wavelet compression for sampling deep Gaussian processes\thanks{Submitted to the editors 10/2026.
\funding{Funded by the Deutsche Forschungsgemeinschaft (DFG, German Research Foundation) -- Project-ID 258734477 -- SFB 1173, the Austrian Science Fund (FWF)
under the special research program Taming complexity in PDE systems (grant SFB F65) as well as projects I6667-N and P36150. This work was also supported by the ERC grant OPTIMAL (grant agreement No. 101125225).}}}

\author{Oskar Broukal\thanks{TU Wien, 
  \email{\{oskar.broukal, michael.feischl, conor.osborne\}@asc.tuwien.ac.at}}
\and Michael Feischl\footnotemark[2]
\and Conor Osborne\footnotemark[2]}

\usepackage{amsopn}
\DeclareMathOperator{\diag}{diag}

\makeatletter
\newcommand*{\addFileDependency}[1]{
  \typeout{(#1)}
  \@addtofilelist{#1}
  \IfFileExists{#1}{}{\typeout{No file #1.}}
}
\makeatother
